\documentclass[12pt,a4paper]{article}

\usepackage[utf8]{inputenc}
\usepackage[T2A]{fontenc}
\usepackage[russian]{babel}
\usepackage{hyphenat}
\usepackage{indentfirst}
\usepackage{wrapfig}
\usepackage{caption}
\usepackage{subcaption}
\usepackage{graphicx}
\usepackage{amsfonts}
\usepackage{amsmath}
\usepackage{amssymb}
\usepackage{amsthm}
\usepackage{hyperref}
\hypersetup
{
colorlinks=true,
linkcolor=black,
filecolor=black,
urlcolor=black,
citecolor=black,
}

\newcommand{\R}{\mathbb{R}}
\newcommand{\Z}{\mathbb{Z}}
\newcommand{\md}{\frac{k-1}{2}} 
\newcommand{\mdd}{(k-1)/2} 
\newcommand{\prd}[1]{\left(#1\right)_{\sigma}} 
\newcommand{\prdd}[1]{\left(#1\right)_{\sigma^{'}}}
\newcommand{\Gr}{G_{rot}}
\newcommand{\oG}{\overline{G}_{rot}}
\newcommand{\class}{(m,j_1,\dots,j_{n-m})}
\newcommand{\spec}{(1,j_1,\dots,j_{n-1},q)}

\newtheorem{deff}{Определение}
\newtheorem{theorem}{Теорема}
\newtheorem{lemma}{Лемма}
\newtheorem*{remark}{Замечание}
\newtheorem*{cons}{Следствие}
\newtheorem*{example}{Пример}

\newtheorem*{permutationlemma}{Лемма о положении}
\newtheorem*{Class}{Теорема классификации}
\newtheorem*{twogroups}{Теорема о двух группах}
\newtheorem*{orientationlemma}{Лемма об ориентации}
\newtheorem*{cluster}{Теорема о кластерах}
\newtheorem*{carefulcomb}{Принцип аккуратной перестановки}
\newtheorem*{Classpermutation}{Теорема о чётной перестановке класса}
\newtheorem*{Classorient}{Теорема об ориентировании класса}

\begin{document}

\title{Полная система инвариантов многомерного Кубика Рубика}

\author{Исаев Роман}

\date{26 июня 2021 г.}

\maketitle

\begin{abstract}
При изучении Кубика Рубика, как правило, задаются вопросами об оптимальном алгоритме сборки, но в то же время важнейшие групповые особенности головоломки упускаются из виду.

Кубик Рубика обладает набором инвариантов. Это те величины, которые сохраняются при вращении слоёв, но могут измениться, если некоторые кубики извлечь из головоломки и поменять местами (или сменить их ориентацию). В этой работе мы обобщим головоломку на случай произвольной размерности и произвольной длины ребра, после чего опишем все инварианты. Именно их существование объясняет, почему не от каждого состояния головоломки можно прийти к другому поворотами слоёв.
\end{abstract}

\section{Введение}
Кубик Рубика изобрёл Эрне Рубик в 1974 году для наглядного представления работы теории групп. Несмотря на почти пятидесятилетнюю историю существования, он продолжает притягивать к себе как любителей головоломок, так и исследователей. Со временем появились различные его вариации: профессорский кубик, пирамидка, мегаминкс и сотни других, многие из которых получили достаточно широкое распространение. Относительно недавно любителей головоломок заинтересовал многомерный Кубик Рубика. Он значительно отличается от классического. Например, в нём появляются новые типы кубиков и эффекты, возникновение которых объясняет теория групп и комбинаторика.

Для демонстрации особенностей головоломки рассмотрим следующую ситуацию. В стандартном Кубике Рубике ($3 \times 3 \times 3$) извлечём угловой кубик, развернём на 120 градусов и вернём на место. Хорошо известно, что из нового состояния поворотами слоёв достичь старого не выйдет. Этот пример иллюстрирует то, что угловые кубики головоломки обладают инвариантом, связанным с их ориентацией. В 'многомерье' ситуация иная: данный эффект угловых элементов проявляет себя в четырёхмерном Кубике, но для всех размерностей выше отсутствует. Как покажем позже, не только угловые кубики имеют пропадающие инварианты.

Как правило, изучение Кубика Рубика обычно сводится к решению следующих двух проблем:

\begin{itemize}
  \item Описание полной системы инвариантов
  \item Оценка (или точное значение) минимального числа ходов для сборки Кубика
\end{itemize}

В обеих проблемах имеются крупные продвижения. Полная система инвариантов трёхмерного Кубика Рубика описана в статье \cite{Bonzio}, там же приведены достаточные и необходимые условия для осуществления его правильной сборки. В решении многомерной проблемы успехи менее значительные, в статье \cite{Levashev} описаны полные системы инвариантов многомерных Кубиков Рубика с ребрами длины 2 и 3.

Наименьшее количество ходов для сборки было найдено только для стандартного Кубика Рубика при помощи ЭВМ. В общем случае (для трёхмерной проблемы) в статье \cite{Erik} была найдена замечательная оценка: $\Theta(k^2 / \log \: k)$, где $k$ -- длина ребра. Для многомерного случая подобные исследования не проводились.

В данной работе описано решение первой проблемы в общем случае, то есть \textbf{полная система инвариантов} произвольного Кубика Рубика. Комбинированием идей статьи \cite{Erik} и результатов, которые мы получим позже, вероятно, можно получить оценку на число ходов и в многомерном случае. Под \textbf{полной системой инвариантов Кубика} понимается набор всех инвариантов с указанием множества значений, которое принимает каждый из них.

Прежде всего, начнём с определения наиболее важных объектов и операций, после чего приступим к терминологии и формулировке результатов.

\section{Основные обозначения и определения}

Далее всюду под натуральными $k \geq 2$ и $n \geq 3$ всегда будем подразумевать \textbf{длину ребра} Кубика Рубика и его \textbf{размерность} соответственно.

\subsection{Используемые множества}

\begin{itemize}
  \item \textbf{Индексные}: $M=\{0,1,\dots,k-1\}, F=M \setminus \{0,k-1\},$
  
  $L=\{1,\dots,\left[\md \right]\}$, где квадратными скобками обозначена целая часть числа. 
  \item \textbf{Основные}: $E=[0,k]$ и $E_j=[j,j+1], \; \forall j\in M$, используемые для обозначения основных элементов головоломки, их положений и ориентаций.
\end{itemize}

Ниже представлен список используемых семейств множеств. Для простоты записи элементы одного и того же семейства будем обозначать одинаковыми символами. То есть, $E_M$ и $E_M$ не являются одним и тем же множеством в общем случае. В редких случаях, когда будет важно подчеркнуть схожесть или различие элементов из одного семейства, это будет оговорено.

\begin{itemize}
  \item $V \in \{ \{0\}, \{k\} \}$,
  \item $E_M \in \Omega_M = \{E_i \; | \; i\in M\}$,
  \item $E_V \in \Omega_{V} = \{E_0, E_{k-1}\}$,
  \item $E_F \in \Omega_F =\Omega_M\setminus \Omega_{V}$
  \item $P_j \in \Omega_j = \{E_j, E_{k-1-j} \}$ для любого $j \in L$.
  
  $P_\md=E_\md$ и $\Omega_\md=\{E_\md\}$, когда $j=\mdd$.
\end{itemize}

В этой работе часто будет появляться величина $\mdd$. Договоримся, что если она фигурирует в наших утверждениях, то подразумевается дополнительное условие, что $k$ нечётно. Если же говорится о целой части $\mdd$, то $k$, конечно, может быть произвольным.

Также, когда речь пойдёт о количестве каких-либо элементов, будем придерживаться соглашений по \textbf{биномиальному коэффициенту}: $\binom{n}{s}$ обращается в нуль при $n\leq 0$, или $s<0$, или $s>n$.

\subsection{Операции над множествами}
\begin{deff}
Образ множества $A \subset \R$ при отображении $f(x)=k-x$ будем называть \textbf{противоположным} к $A$ и обозначать за $\overline{A}$.
\end{deff}

\begin{example}
$E_i$ противоположно $E_{k-1-i}$ при любых $i \in M$. Отсюда следует, что после применения такой операции из $E_V$ и $P_j$ мы получим снова $E_V$ и $P_j$ соответственно.
\end{example}

\begin{deff}
Для множеств $A_1,\dots,A_n \subset \R$ и произвольной перестановки $\sigma$ определено \textbf{перестановочное произведение}
\[
\prd{A_1 A_2 \dots A_n}=A_{\sigma(1)}\times A_{\sigma(2)} \times \dots \times A_{\sigma(n)}.
\]
\end{deff}

\begin{example}
\[
\prd{A_1 A_2}=\begin{cases} A_1\times A_2, & \sigma = e, \\ A_2\times A_1, & \sigma = (1 \; 2). \end{cases}
\]
\end{example}

\begin{remark}
Будем придерживаться соглашения, что $E_M^n$ - это не перемножение одного и того же множества, а
    \[E_M^n = E_M \times \dots \times E_M = E_{i_1} \times \dots \times E_{i_n}\]
    для некоторых $i_j$ из $M$.
Аналогичное верно и для других множеств, таких как $V, \: E_V, \: E_F$ и $P_j$ .
\end{remark}

\begin{deff}
Пусть $A_1,\dots,A_n \subset \R$, а $\Omega_1,\dots,\Omega_m$ - семейства подмножеств $\R$. Тогда их \textbf{смешанным произведением} называется
\[
A_1\times \dots \times A_n\times \Omega_1\times \dots \times \Omega_m = \{A_1\times \dots \times A_n\times B_1\times \dots \times B_m \; | \; B_i\in \Omega_i\}.
\]
\textbf{Смешанное перестановочное произведение} определяется аналогично перестановочному произведению. 

\textbf{Смешанное перестановочное семейство}
\[
B_1 \dots B_n \Omega_1 \dots \Omega_m = \bigcup_{\sigma \: \in \: S_{n+m}} \prd{B_1 \dots B_n \Omega_1 \dots \Omega_m}.
\]
\end{deff}

\begin{remark}
В частности, $n$ может быть равным нулю.
\end{remark}

\begin{example}
\[
\prd{E_M\Omega_M}=
\begin{cases} E_M\times \Omega_M=\{E_M\times E_i \; | \; i \in M \}, & \sigma = e, \\ \Omega_M\times E_M= \{E_i\times E_M \; | \; i \in M \}, & \sigma = (1 \; 2). \end{cases}
\]
\[
E_M \Omega_M = E_M \times \Omega_M \cup \Omega_M \times E_M.
\]
\[
\Omega_M^2 = \Omega_M \times \Omega_M = \{E_i\times E_j \; | \; i,j \in M \}.
\]
\end{example}

\section{Анатомия многомерного Кубика Рубика}

\subsection{Терминология}

\begin{deff}
\textbf{$n$-мерным кубом с длиной ребра $k$} (далее -- кратко, \textbf{кубом}) будем называть подмножество $\R^n$
\[
E^n = \{(x_1, \dots ,x_n) \; | \; 0 \le x_i \le k\}.
\]

Обобщим понятие грани куба на многомерный случай.

Множество граничных точек куба, образующих $t$-мерное подпространство в $\R^n$ $(t \geq 2)$, называется \textbf{$t$-мерной гранью} (или \textbf{$t$-гранью}) куба. Грань записывается как
\[
\prd{V^{n-t} E^t} = \{(x_1, \dots ,x_n) \; | \; x_{i_1}=c_1, \dots ,x_{i_{n-t}}=c_{n-t}\},
\]
где значения $(n-t)$ выбранных координат фиксированы и $c_1, \dots ,c_{n-t} \in \{0,k\}$.

В частности, если положить $t=0$ или $t=1$, то получим \textbf{вершину} или \textbf{ребро} соответственно.
\end{deff}

\begin{deff}
Разделим куб на $k^n$ единичных \textbf{кубиков}, каждый из которых является подвижным объектом в головомке и может менять своё положение. В свою очередь \textbf{положением кубика}, координаты точек которого 
\[
j_i \le x_i \le j_i+1, \; i=1, \dots, n,
\]
называется выражение
\[
E_M^n = E_{j_1}\times \dots \times E_{j_n}
\]
для некоторой совокупности индексов $j_1,\dots,j_n \in M$.

Определим \textbf{грани кубика} точно так же, как и для куба. \textbf{Внешней $t$-гранью} будем называть лишь ту $t$-мерную грань кубика, все точки которой -- суть граничные точки большого куба. Иначе говоря, внешняя грань содержится в некоторой $t$-грани куба и получается её пересечением с кубиком.

\end{deff}

\begin{remark}
Так как в головоломке покрашены лишь внешние грани, то и работать мы будем исключительно с ними. Потому с этого момента, для простоты, будем называть их гранями. 
\end{remark}

Пример того, как обозначаются элементы головоломки в новой терминологии можно увидеть на рисунке \ref{sec3sub1pic1}.

\begin{figure}[h]
\begin{center}
\begin{minipage}[h]{0.6\linewidth}
\includegraphics[width=1\linewidth]{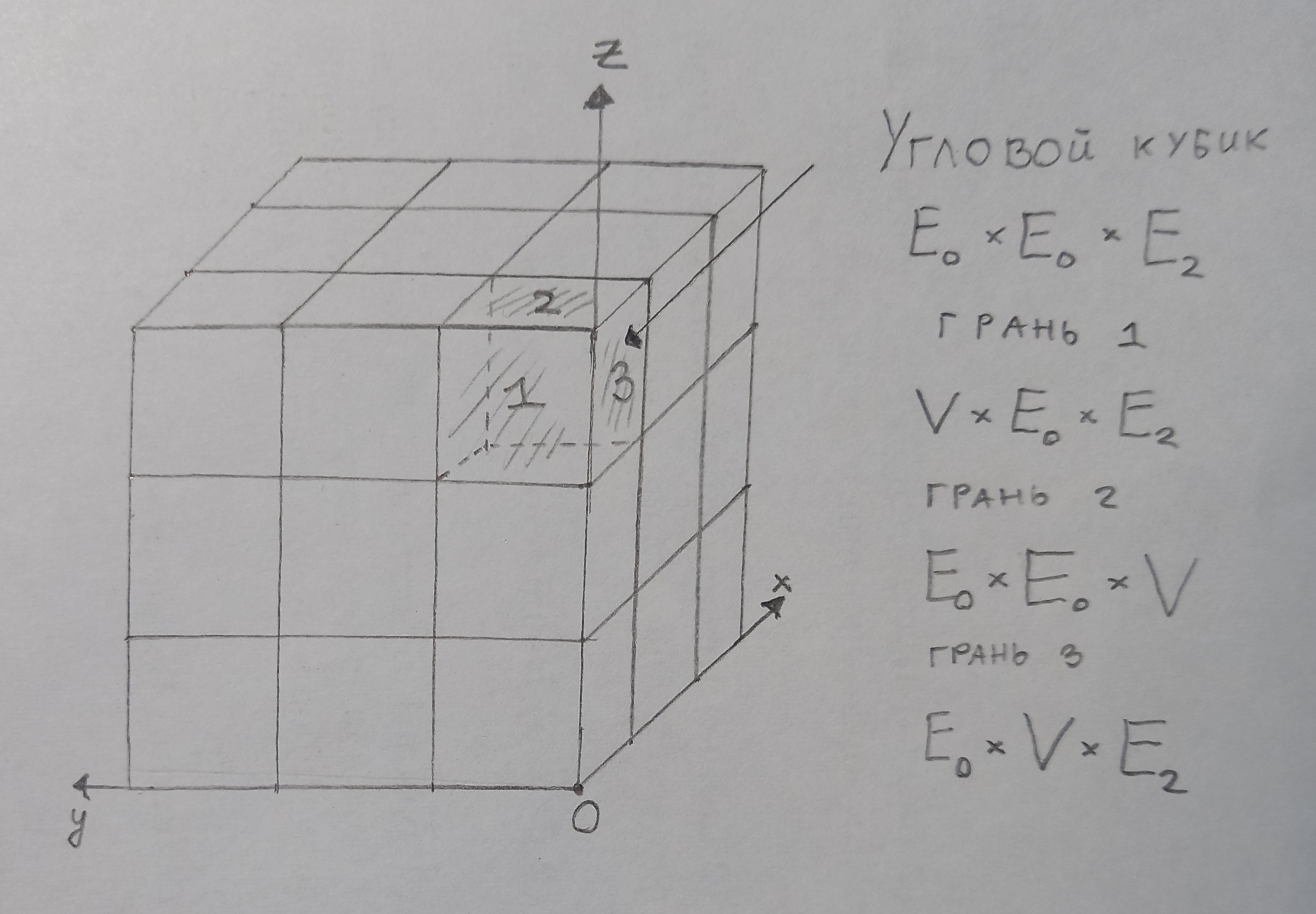}
\caption{}
\label{sec3sub1pic1}
\end{minipage}
\begin{minipage}[h]{0.39\linewidth}
\includegraphics[width=1\linewidth]{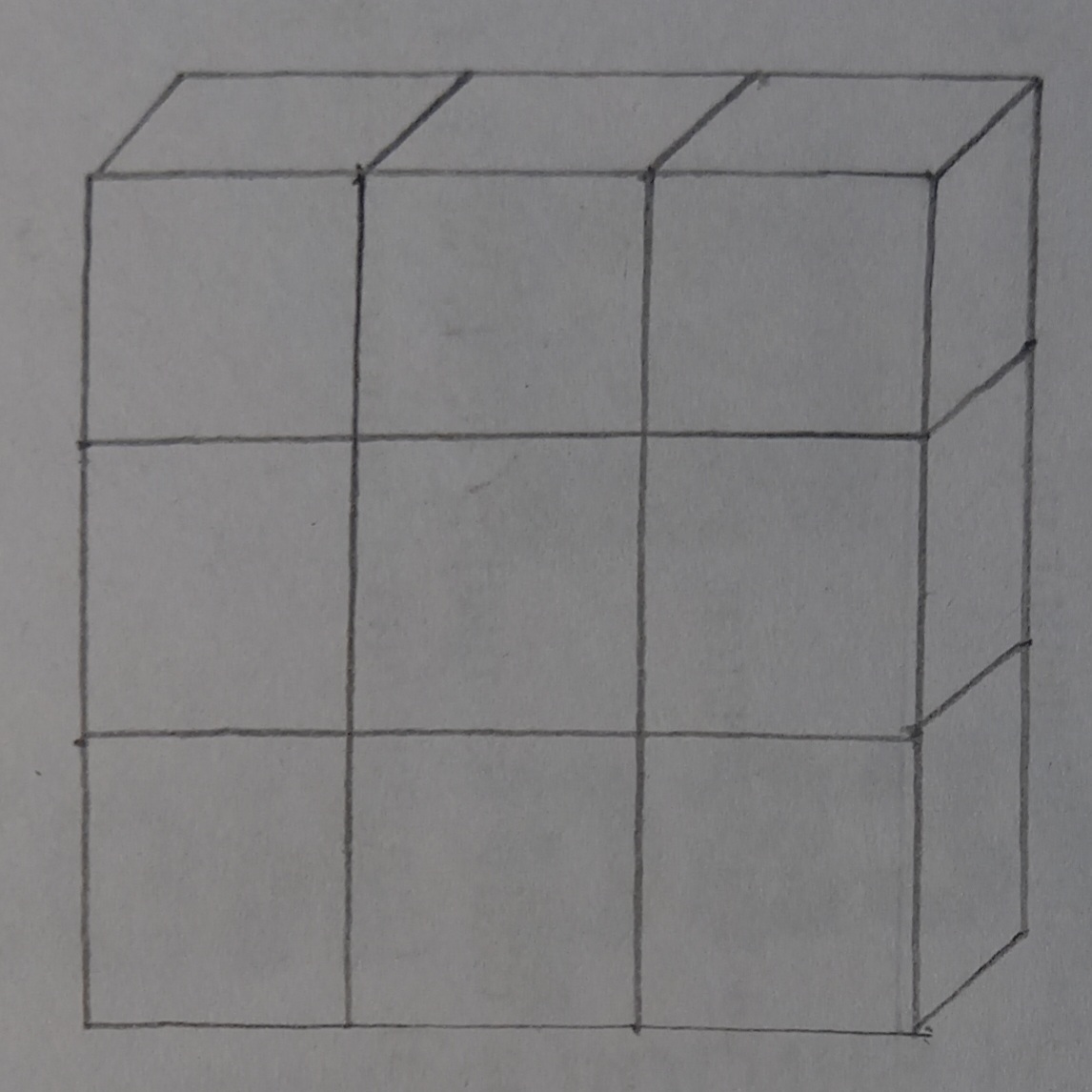}
\caption{}
\label{sec3sub1pic2}
\end{minipage}
\end{center}
\end{figure}

\begin{deff}
Рассмотрим множество всех кубиков, у центров которых зафиксированы и совпадают значения $(n-t)$ координат, а остальные $t$ могут принимать произвольные значения. Полученное множество, являющееся аналогом $t$-мерного куба внутри $n$-мерной головоломки, называется \textbf{$t$-мерным слоем} и записывается как
\[
\prd{E_M^{n-t} \Omega_M^t}.
\]

\textbf{Внешним $t$-мерным слоем} называется слой, состоящий из всех кубиков, примыкающих к некоторой $t$-грани куба. То есть, если задана грань куба
\[
\prd{V^{n-t} E^t},
\]
то слой можно записать в виде
\[
\prd{E_V^{n-t} \Omega_M^t}.
\]
Другие же слои называются \textbf{внутренними}.
\end{deff}

На рисунке \ref{sec3sub1pic2} продемонстрирована проекция двухмерного слоя многомерного куба на трёхмерное пространство. Проекция трёхмерного слоя является обычным кубом, который можно увидеть на рисунке \ref{sec3sub1pic1}.

\begin{deff}
\textbf{Оператором поворота пространства} называется отображение $\Psi_{ij}: \R^n \to \R^n$
\[
\Psi_{i,j}\left(\sum\limits_{k=1}^{n} x_k e_k\right) =\sum\limits_{k=1, \: k\neq i,j}^{n} (x_k e_k) - x_j e_i + x_i e_ j,
\]
где $e_1, \dots ,e_n$ -- стандартный базис $\R^n$.

Выберем некоторый слой и ограничим действие данного оператора только на него. Тогда все кубики вне слоя останутся неподвижными. Новый оператор называется \textbf{оператором поворота данного $t$-слоя}
\[
\psi_{i,j}:  \Omega_M^n \to \Omega_M^n
\]
и изменяет положения кубиков слоя по следующему правилу
\[
\psi_{i,j}\left(\prod_{i=1}^{n} A_m\right) = \prod_{i=1}^{n} A^{'}_m
\]
\[
A^{'}_m=\begin{cases}
A_m, & m\neq i,j, \\
\overline{A}_j, & m=i, \\
A_i, & m=j.
\end{cases}
\]
\end{deff}

Далее под поворотом будем всегда подразумевать оператор поворота слоя.

Заметим, что то, какой слой, содержащий кубик, будет выбран, не влияет на результат поворота. Новое положение зависит только от старого и параметров $i$ и $j$. Потому, если не оговорена размерность вращающегося слоя, будем подразумевать двумерный слой. Этот выбор обусловлен тем, что двумерные слои изменяют положения наименьшего количества кубиков, что обеспечивает большую 'аккуратность' в обращении с элементами головоломки.

\begin{deff}
\textbf{Комбинацией поворотов} (или просто, комбинацией) будем называть последовательность поворотов слоёв головоломки.

Каждая комбинация $q$ имеет \textbf{обратную}, записываемую как $q^{-1}$. Она осуществляется поворотами слоёв в обратном порядке.
\end{deff}

\begin{deff}
Для каждого $2 \leq t \leq (n-1)$ покрасим $t$-грани всех кубиков в разные цвета так, чтобы
\begin{itemize}
    \item каждый цвет присутствовал только на гранях определённой размерности,
    \item среди граней одинаковой размерности каждый цвет встречался одно и то же число раз.
\end{itemize}
Множество всех покрашенных граней с оператором поворота слоя и будем называть \textbf{Кубиком Рубика}.
\end{deff}

\begin{deff}
\textbf{Состоянием Кубика Рубика} называется расположение в головоломке покрашенных граней.

Два \textbf{состояния} называются \textbf{связанными}, если от одного можно прийти к другому некоторой комбинацией.

Два кубика называются \textbf{связанными}, если поворотами можно перевести кубик из своего положения в положение другого. Такое отношение, очевидно, является отношением эквивалентности.
\end{deff}

Отметим следующее. Пусть в Кубике Рубика найдутся одинаково покрашенные кубики. Пронумеруем все кубики и перейдём поворотами от одного состояния к другому, в котором головоломка раскрашена также, но кубики располагаются иначе. Тогда согласно определению начальное и конечное состояние совпадают, хотя сами кубики уже расположены иначе.

\begin{deff}
\textbf{Инвариантом Кубика Рубика} называется величина, зависящая от его состояния. Она сохраняется при поворотах слоёв, но существуют такие состояния куба, в которых она принимает разные значения.

\textbf{Система инвариантов} -- термин, означающий то же, что множество инвариантов. \textbf{Полная система} - система с максимальным числом инвариантов.
\end{deff}

\subsection{Классификация по положениям}

Предположим дано положение кубика
\[
A_1 \times \dots \times A_n,
\]
где $A_i=E_l, \: A_j=E_r$. Изучим действие поворота $\psi_{i,j}$ на положение. Так как будут изменяться лишь $i$-ая и $j$-ая координаты, то легче рассматривать положение кубика в упрощённой форме: $A_i \times A_j$ = $E_l \times E_r$.

\begin{permutationlemma}
Поворот не меняет положения кубика при $l=r=\mdd$. В противном случае осуществляется циклическая перестановка четырёх кубиков, включая данный, положения которых
\[
\{E_l\times E_r,\; \overline{E_r}\times E_l,\; \overline{E_l}\times \overline{E_r},\; E_r\times \overline{E_l} \}.
\]
Иначе говоря, поворотами $\psi_{i,j}$ можно привести кубик к одному из этих четырёх положений.
\end{permutationlemma}

Доказательство мгновенно вытекает из определения поворота.

Введение выше упрощённой ориентации будет использоваться и в дальнейшем для упрощения записи положений кубиков.

\begin{deff}
Для произвольного кубика, находящегося в положении
\[
\prd{E_V^m P_{j_1} \dots P_{j_{n-m}}},
\]
определим \textbf{каноническое положение}
\[
P=E_0^m \times E_{k_1}\times \dots \times E_{k_{n-m}},
\]
где индексы $k_1,\dots,k_{n-m}$ -- это те же самые индексы $j_1,\dots,j_{n-m} \in L$, но записанные в порядке неубывания.
\end{deff}

Легко проверить, что каноническое положение инвариантно относительно поворота. Всюду в этой работе будем обозначать его как $P$. Далеко не всегда от произвольного положения кубика можно комбинацией прийти к каноническому, что будет следствием следующей теоремы.

Для начала, вспомним, что отношение связанности является отношением эквивалентности. В первом пункте теоремы разобьём множество всех кубиков на подмножества, которые назовём тривиальными классами. В общем случае они классами эквивалентности по связанности не являются. Последние будут описаны во втором пункте теоремы.

\begin{Class}

\textbf{1. Тривиальная классификация}.

Для любого целого $m$ такого, что $0\leq m\leq n$, определим тривиальный класс $(m)$, состоящий из всех кубиков с положениями
\[
\prd{E_V^m E_F^{n-m}}.
\]
Тогда множество всех кубиков распадается на непересекащиеся тривиальные классы.

Кубики класса (0) называем \textbf{внутренними}, остальные -- \textbf{внешними}.

\textbf{2. Классификация по связанности}.

Все классы эквивалентности по связанности, на которые распадается множество внешних кубиков, описаны ниже.

Для любого $1\leq m\leq n$ и неубывающего набора чисел
\[
j_1 \leq \dots \leq j_{n-m} \in L,
\]
называемых \textbf{характеристиками}, рассмотрим множество $\class$ всех кубиков с положениями
\[
\prd{E_V^m P_{j_1}\dots P_{j_{n-m}}}.
\]
Если $m=n$, то набор характеристик вырождается и множеством будет тривиальный класс $(n)$.

\textbf{Если выполняется условие}
\[
m=1 \text{ и } j_1 < j_2 <\dots < j_{n-1}<\md,
\]
($\mdd$ здесь не обязательно натуральное) то такое множество разбивается на два класса эквивалентности, в зависимости от того, приводятся ли его кубики в свои канонические положения или нет. Оба класса называются \textbf{особыми} и существуют только при $k\ge 2n$.

\textbf{Если оно не выполняется,} то данное множество само является классом.

\end{Class}

\begin{proof}
\textbf{Тривиальная классификация.}

Легко проверить, что вращение слоёв не влияет на количество множеств $E_V$ в положение произвольно взятого кубика. Поэтому, в зависимости от количества $E_V$ в записи положения, кубик принадлежит одному из $(n+1)$ тривиальных классов.

\textbf{Классификация по связанности.}

\textbf{1.} Возьмём элемент тривиального класса $(m)$. Так как для некоторого $j \in F$
\[
E_F = E_j = P_s, \: s=min(j,k-1-j) \in L,
\]
то положение можно записать в виде
\[
\prdd{E_V^m E_F^{n-m}}=\prd{E_V^m P_{j_1}\dots P_{j_{n-m}}}
\]
для некоторого $\sigma$ и $j_1 \leq \dots \leq j_{n-m}$.

Итак, \textbf{каждый кубик принадлежит хотя бы одному из описанных множеств.}

\textbf{2.} Для любого кубика из $\class$ каноническое положение будет
\[
P=E_0^m \times E_{j_1} \times \dots \times E_{j_{n-m}}.
\]
То есть оно зависит не от выбора кубика из класса, а только от характеристик. Так как для разных множеств характеристики разные, то и их канонические положения различны. В виду инвариантности последних, получаем, что \textbf{множества не пересекаются}.

\textbf{3.} Рассмотрим произвольный кубик из $\class$ и попробуем привести его к каноническому положению класса. Если это удастся, то все элементы множества будут связаны, а само множество -- классом эквивалентности.

\textbf{Шаг 1.} Так как противоположными к $E_V$ и $P_j$ будут $E_V$ и $P_j$ соответственно, то поворот $\psi_{i,j}$ совершает транспозицию $i$-го и $j$-го множеств в записи:
\[
\prd{E_V^m P_{j_1} \dots P_{j_{n-m}}}.
\]  
Совершая нужные повороты, придём к
\[
E_V^m\times  P_{j_1}\times \dots \times P_{j_{n-m}}.
\]

\textbf{Шаг 2.} Опишем как прийти к положению
\[
E_V\times E_0^{m-1}\times E_{j_1}\times \dots \times E_{j_{n-m}}.
\]
Двойной поворот заменяет два множества из произведения на противоположные, сохраняя порядок. Если необходимо поменять $i$-тое ($i > 1$) множество на противоположное
\[
P_j=E_{k-j} \text{ на } E_j \text{ или } E_V=E_{k-1} \text{ на } E_0,
\]
то реализуем $\psi^2_{1,i}$. Таким образом удастся 'настроить' все множества, кроме первого. Оставшееся принимает одно из двух значений: $E_0$ или $E_{k-1}$. Если  $E_V=E_0$, то мы пришли к $P$. Предположим противное.

\textbf{Шаг 3 (условие не выполнено).}

Отрицание данного условия даёт три условия на число $m$ и характеристики. Запишем каждое из них и нужную комбинацию поворотов, меняющую $E_{k-1}$ на $E_0$. Все положения на этом шаге имеют упрощённый вид.

\begin{itemize}
    \item $m>1$.
    
    $\psi_{2,1}$ переводит $A_1 \times A_2=E_{k-1} \times E_0$ в $E_0 \times E_0$.
    \item $m=1$ и $j_{n-1}=\mdd$.
    
    $\psi^2_{1,n}$ переводит $ A_1 \times A_n=E_{k-1} \times E_\md$ в положение $E_0 \times E_\md$.
    \item $m=1$ и найдётся $i: j_i=j_{i+1}<\mdd$.
    
    Композиция
    \[
    \psi_{m+i+1,m+i} \circ \psi^2_{1,m+i}
    \]
    переводит $A_1 \times A_{m+i} \times A_{m+i+1}=E_{k-1}\times E_{j_i} \times E_{j_i}$ в $E_{0}\times E_{k - j_i} \times E_{j_i}$, а потом в $E_{0}\times E_{j_i} \times E_{j_i}$.
\end{itemize}

\textbf{Шаг 3 (условие выполнено).} 
Заметим, что в данном случае никакие два множества положения не совпадают, с точностью до применения операции замены на противоположное. Иначе говоря, если записать положение как
\[
A_1 \times \dots \times A_n,
\]
то
\[
\forall i \neq j \; (A_i \neq A_j \: \text{и} \: A_i \neq \overline{A_j}).
\]
Значит, при переходе от одного положения к другому всегда можно сказать, сколько множеств стали противоположными. 

Каждый поворот совершает транспозицию двух множеств и заменяет одно на противоположное. То есть, чётность перестановки множеств в положении и чётность количества противоположных меняются одновременно. Но
\[
P^{'}=E_{k-1} \times E_{j_1}\times \dots \times E_{j_{n-1}}
\]
отличается от $P$ лишь чётностью противоположных. Значит, оно не переводится в $P$.

Множество разбивается на те элементы, которые переводятся в $P$, и которые переводятся в $P^{'}$. Каждое из них, очевидно, образует класс эквивалентности.

\textbf{Особые классы существуют}, если найдутся числа, удовлетворяющие неравенствам:
\[
1 \leq j_1 < j_2 <\dots < j_{n-1} < \md.
\]
Так как все $j_i$ натуральные, то $n-1 \leq j_{n-1}$ и $2n \le k$.
\end{proof}

Особые классы будем обозначать как $\spec$, где $q=1$, если кубики приводятся к $P$, и $q=-1$, если к $P^{'}$.

Примеры того, как кубики разбиваются на классы, приведён на рисунке \ref{sec3sub2pic1}.

\begin{deff}
Тривиальные классы $(n), \; (n-1)$ и $(n-t)$, где
\[
2\leq t \leq n-1,
\]
называются \textbf{угловыми, рёберными и $t$-гранными} соответственно.

Классы $(m,\md, \dots, \md)$ для $1 \leq m \leq n$ будем называть \textbf{центральными}. Название обусловлено тем, что они находятся в центрах внешних слоёв (см. покрашенные кубики на рис. \ref{sec3sub2pic1}).

Класс $(1,\md, \dots, \md)$ называется \textbf{каркасом куба}, ему мы уделим время при описании полной системы инвариантов.
\end{deff}

\begin{figure}[h]
    \centering
    \includegraphics[width=0.8\linewidth]{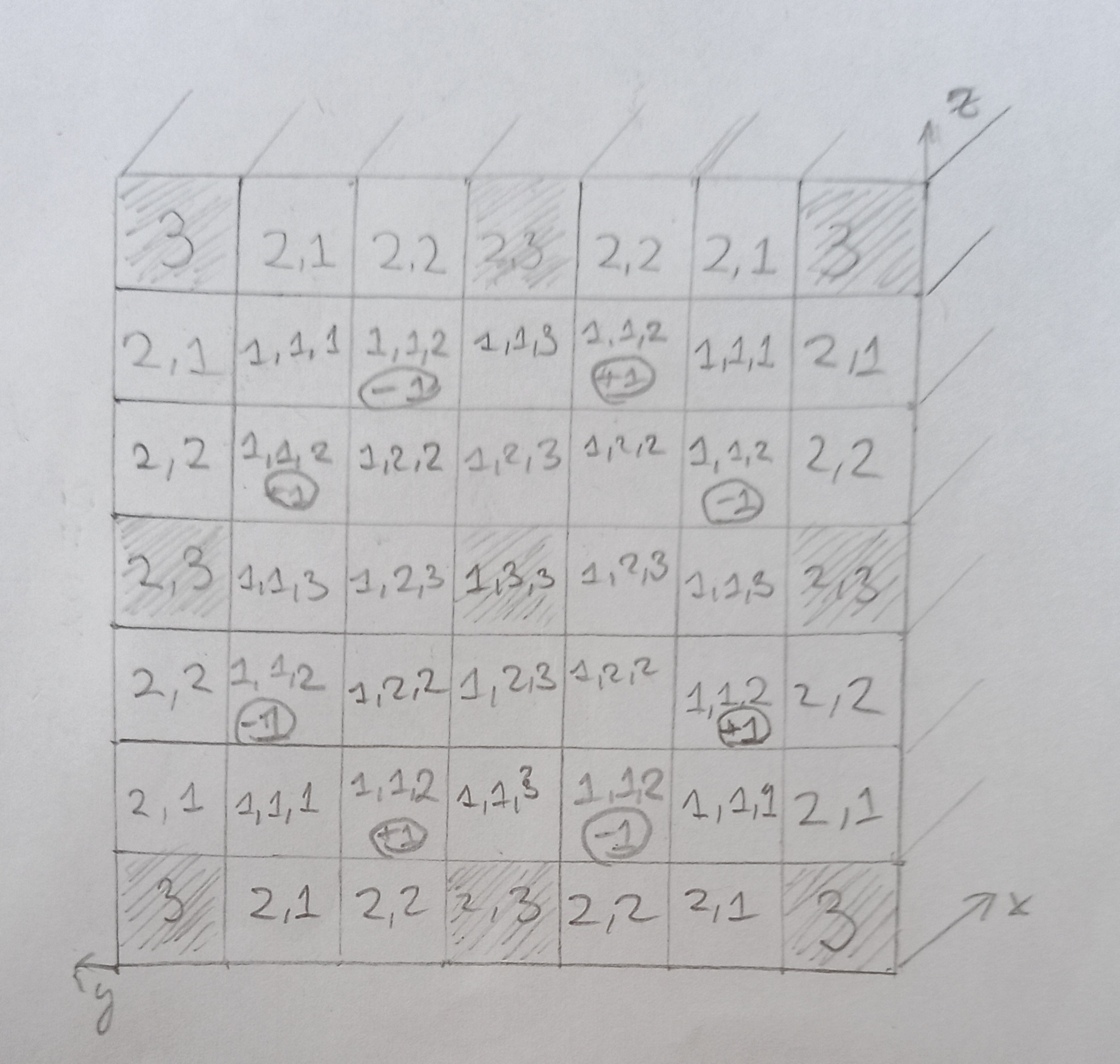}
    \caption{Классы куба $7 \times 7 \times 7$}
    \label{sec3sub2pic1}
\end{figure}

\begin{remark}
В этой работе мы не станем изучать внутренние кубики $E_F^n$, так как они не пересекаются ни с одной гранью куба и не обладают ни одной гранью.
\end{remark}

\subsection{Ориентационные грани}

\begin{lemma}
Предположим кубик обладает $t$-гранью, образованной пересечением с гранью куба
\[
\prd{V^{n-t} E^t}=\prd{V_1 \dots V_{n-t} E^t}.
\]
Тогда положение грани кубика задаётся как
\[
\prd{V^{n-t} E_M^t} =\prd{V_1 \dots V_{n-t} E_{j_1} \dots E_{j_{t}}}
\]
для некоторых чисел $j_1,\dots j_t \in M$. При этом положение самого кубика запишется в виде:
\[
\prd{E_V^{n-t} E_M^t}=\prd{E_{V_1} \dots E_{V_{n-t}} E_{j_1} \dots E_{j_t}}.
\]
(индексы при множествах $V$ и $E_M$ введены, чтобы подчеркнуть, какие из них в $\prd{V^{n-t} E_M^t}$ и $\prd{E_V^{n-t} E_M^t}$ совпадают).
\end{lemma}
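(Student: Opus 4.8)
The plan is to argue directly from the definition of the external $t$-face as the intersection of the cubie with a $t$-face of the cube, and then simply read off the two product descriptions. Let the cube's face $\prd{V_1 \dots V_{n-t} E^t}$ fix the coordinates with indices $i_1, \dots, i_{n-t}$ to values $c_1, \dots, c_{n-t} \in \{0,k\}$ (so that $V_s = \{c_s\}$), while the remaining $t$ coordinates range freely over $E = [0,k]$. Writing the cubie's position as $E_{j_1} \times \dots \times E_{j_n}$ with each $j_i \in M$, the goal is to determine which factors are forced in the fixed slots and which survive after intersecting.

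First I would handle the fixed coordinates. Since $E_\ell = [\ell, \ell+1]$ meets the hyperplane $x = 0$ only when $\ell = 0$ and meets $x = k$ only when $\ell = k-1$, the hypothesis that the cubie actually touches the face forces $E_{j_{i_s}} = E_0$ when $c_s = 0$ and $E_{j_{i_s}} = E_{k-1}$ when $c_s = k$. In either case $E_{j_{i_s}} \in \Omega_V = \{E_0, E_{k-1}\}$, so it is a set of type $E_V$, which I denote $E_{V_s}$. Moreover, intersecting this factor with the fixing hyperplane collapses $E_0$ to $\{0\}$ and $E_{k-1}$ to $\{k\}$, i.e. exactly to $V_s$. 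This is the heart of the matter: one computation simultaneously pins down the $E_V$-factor in the cubie's position and the $V$-factor in its face, and it exhibits the correspondence $V_s = \{0\} \leftrightarrow E_{V_s} = E_0$, $V_s = \{k\} \leftrightarrow E_{V_s} = E_{k-1}$ that justifies the matching subscripts on $V$ and $E_V$.

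Next I would treat the $t$ free coordinates. There the cube face imposes no constraint, since $E = [0,k]$ contains every $E_j$, so the intersection leaves each such factor unchanged with index in $M$; these are the $j_1, \dots, j_t$ of the statement. Taking the product of the collapsed fixed factors together with the untouched free factors, and recording the permutation $\sigma$ that sends the listed order (all $V$/$E_V$ factors first, then the $E_M$ factors) to the actual coordinate indices $i_1,\dots,i_{n-t}$ and their complement, yields the cubie's face as $\prd{V_1 \dots V_{n-t} E_{j_1} \dots E_{j_t}}$ and its position as $\prd{E_{V_1} \dots E_{V_{n-t}} E_{j_1} \dots E_{j_t}}$, with one and the same $\sigma$, because the fixed and free coordinates occupy identical slots in both descriptions.

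The argument is essentially an unwinding of definitions, so I expect no deep obstacle; the step that needs genuine care is the index bookkeeping — verifying that the permutation $\sigma$ aligning coordinates is truly the same for the face and for the position, and that the $\{0\}/\{k\}$ versus $E_0/E_{k-1}$ correspondence is respected slot by slot rather than merely as unordered collections.
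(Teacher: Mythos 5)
Your proposal is correct and follows essentially the same route as the paper: both compute the intersection coordinate-by-coordinate, use non-emptiness to force each fixed-slot factor to be $E_0$ or $E_{k-1}$ (collapsing to $V_s$ under intersection), observe that the free slots are untouched since $E\supset E_j$, and keep a single permutation $\sigma$ aligned across the face and the position. The only cosmetic difference is that the paper phrases the componentwise step via explicit rules for intersecting perestановочные произведения, while you phrase it via hyperplane incidence; the content is identical.
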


\begin{proof}
Для начала перепишем положение кубика в удобном виде:
\[
E_M^n=\prd{E_M^n}=\prd{E_M^{n-t} E_{j_1} \dots E_{j_t}},
\]
где $j_1,\dots j_t$ -- некоторые числа из $M$, а $\sigma$ берём такую же, как у $\prd{V^{n-t} E^t}$.

В виду правил:
\[
X_1 \times \dots \times X_n \cap Y_1 \times \dots \times Y_n = (X_1 \cap Y_1) \times \dots \times (X_n \cap Y_n)
\]
\[
\prd{X_1 \dots X_n} \cap \prd{Y_1 \dots Y_n}=\prd{(X_1 \cap Y_1) \dots (X_n \cap Y_n)}
\]
имеем цепочку равенств:
\[
E_M^n \cap \prd{V^{n-t} E^t}=\prd{E_M^{n-t} E_{j_1} \dots E_{j_t}} \cap \prd{V^{n-t} E^t} =
\]
\[
=\prd{(E_M \cap V)^{n-t} (E_{j_1} \cap E) \dots (E_{j_t} \cap E)} =
\]
\[
= \prd{(E_M \cap V_1) \dots (E_M \cap V_{n-t}) E_{j_1} \dots E_{j_t}}.
\]
Пересечение не должно быть пустым. В то же время
\[
E_M \cap V_i = \begin{cases}
\varnothing, & E_M \neq E_{V_i} \\
V_i, & E_M = E_{V_i}
\end{cases}
\]
Так что положение $t$-мерной грани задаётся как
\[
\prd{V^{n-t} E_M^t} =\prd{V_1 \dots V_{n-t} E_{j_1} \dots E_{j_t}},
\]
а кубика:
\[
\prd{E_V^{n-t} E_M^t}=\prd{E_{V_1} \dots E_{V_{n-t}} E_{j_1} \dots E_{j_t}}.
\]
\end{proof}

\begin{cons}
У кубика тривиального класса $(m)$ всего $\binom{m}{n-t} \; t$-граней.
\end{cons}

\begin{proof}
По определению положение кубика будет
\[
E_M^n=\prd{E_V^{m} E_F^{n-m}}.
\]
Из предыдущей леммы видно, что положения $t$-граней получаются из положения кубика заменой $(n-t)$ множеств $E_{V_i}$ на $V_i$. Тогда искомое количество граней -- это количество способов выбрать из $m$ множеств $(n-t)$ штук:
\[
\binom{m}{n-t}
\]
\end{proof}

В частности, у кубика тривиального класса $(m)$
\begin{itemize}
    \item нет $t$-граней, если $t<n-m$;
    \item одна $(n-m)$-мерных грань;
    \item $m$ $(n+1-m)$-мерных граней.
\end{itemize}

Временно определим ориентацию кубика как совокупность положений \textbf{всех} его покрашенных граней. Как показывает следующая теорема, это условие излишне, небольшого количества граней вполне хватает для определения ориентации.

\begin{theorem} \label{orientation}
Все $(n+1-m)$-мерные грани кубика класса $\class$ однозначно определяют его ориентацию (Далее их называем \textbf{ориентационными}). Иначе говоря, зная положения этих граней, можно указать положения всех остальных.
\end{theorem}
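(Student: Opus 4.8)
\emph{План доказательства.} Я бы исходил из только что доказанной леммы: каждая грань кубика класса $\class$ получается из его положения $\prd{E_V^m E_F^{n-m}}$ заменой некоторого набора множителей $E_V$ (их ровно $m$) на соответствующую вершину $V_i$. Значит, грани отвечают подмножествам $S$ множества $I$ граничных координат (тех $m$ позиций, где стоит $E_V$): грань $F_S$ получается схлопыванием в вершины тех множителей, индексы которых лежат в $S$, и имеет размерность $n-|S|$. Ориентационные грани --- это в точности $F_S$ с $|S|=m-1$; их ровно $m$, каждая оставляет один граничный множитель нетронутым (интервалом), а именно грань $F_{I\setminus\{p\}}$ оставляет координату $p$. Поскольку ориентация по определению есть набор текущих положений всех покрашенных граней $F_S$, достаточно показать, что текущие положения $m$ ориентационных граней восстанавливают положение каждой $F_S$.

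Сначала по ориентационным граням я бы восстановил всю геометрию текущего кубика. Во-первых, сам кубик --- это наименьший координатный параллелепипед, содержащий все ориентационные грани: его проекция на каждую ось совпадает с ребром кубика, ведь для внутренних осей это видно из любой ориентационной грани, а граничную ось раскрывает та единственная грань $F_{I\setminus\{p\}}$, у которой эта ось осталась интервалом. Во-вторых, каждая ориентационная грань однозначно указывает свою нетронутую ось, а значит задаёт соответствие между граничной координатой $p$ эталонного кубика и её образом в текущем положении; заодно схлопнутые множители этих граней дают значения вершин $V_i$ для всех граничных координат. После этого положение произвольной грани $F_S$ восстанавливается механически: берём найденный кубик и схлопываем в найденные вершины ровно те множители, что отвечают координатам из $S$. Это и есть искомое положение $F_S$, так что все грани, то есть вся ориентация, определены.

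Главная трудность --- корректность считывания отдельных множителей из положения-параллелепипеда, когда среди внутренних характеристик есть совпадающие, $j_l=j_{l'}$. Тогда множители $E_{j_l}$ и $E_{j_{l'}}$ неразличимы и образ каждого по отдельности восстановить нельзя. Я бы снял эту трудность замечанием, что перестановка двух одинаковых множителей есть симметрия кубика, сохраняющая положение \emph{каждой} покрашенной грани (произведение не меняется при перестановке двух равных множителей $E_{j}$), а потому она вообще не влияет на ориентацию --- неоднозначность несущественна. Граничные же множители всегда различимы, так как каждая ориентационная грань выделяет свою собственную ось. Отдельно стоит проверить крайние случаи: $m=1$, когда единственная ориентационная грань совпадает с самим кубиком и из него сразу читаются и вершина, и внутренние множители, и угловой случай $m=n$, когда ориентационные грани суть $n$ рёбер, выходящих из вершины, а их образы прямо задают образы всех $n$ осей.
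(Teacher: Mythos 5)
Your proposal is correct, but it takes a genuinely different route from the paper's. The paper first reduces to the cubie in canonical position and then runs an incidence argument: tabulating a chosen higher-dimensional face against the $m$ orientational faces, it notes that the orientational faces \emph{contained} in the chosen face are exactly those indexed by the boundary slots where that face keeps $E_0$ rather than $V$; since containment of faces is intrinsic to the cubie and is preserved under any change of orientation, no face of dimension greater than $(n+1-m)$ can move while the orientational faces stay fixed. You instead reconstruct explicitly: the cubie's box is the smallest coordinate parallelepiped containing the orientational faces; each orientational face singles out, as its unique non-collapsed boundary-edge factor, the current axis of one boundary coordinate; the collapsed factors give the vertex values; and then the position of any face $F_S$ is written out mechanically. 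What your route buys: it is constructive (it literally exhibits the positions of the remaining faces, which is what the theorem asserts), it works in an arbitrary position without the reduction step, and it forces you to confront --- and correctly dismiss --- the ambiguity coming from repeated characteristics $j_l=j_{l'}$; your observation that the position of any $F_S$ involves collapsing only \emph{boundary} coordinates, so that the internal-axis correspondence is irrelevant, settles this point. What the paper's route buys: brevity, and it sidesteps that ambiguity altogether, since the containment pattern refers only to the boundary slots. Both arguments rest on the preceding lemma (external faces are obtained by replacing boundary factors $E_V$ with vertices $V$), and both implicitly use the same background fact that any admissible reorientation preserves the combinatorial face structure of the cubie.
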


\begin{proof}
Переформулируем утверждение, которое нужно доказать. Пусть кубик находится в фиксированном положении и мы меняем его ориентацию. Тогда нельзя изменить положение $t$-грани ($t>(n+1-m)$), не меняя положений ориентационных.

Достаточно доказать это утверждение только для одного из кубиков класса, так что выберем тот, который находится в каноническом положении
\[
E_0^m \times E_{j_1} \times \dots \times E_{j_{n-m}}.
\]
Из предыдущей леммы следует, что для получение положения какой-либо его $(n+1-m)$-грани необходимо $(m-1)$ множеств $E_V=E_0$ заменить на $V=\{0\}$.

Выберем произвольную грань кубика размерности большей, чем $(n+1-m)$. Докажем, что изменение её положения влечёт изменения положений ориентационных. Для этого запишем положения выбранной и ориентационных граней в таблицу по строчкам следующим образом и пронумеруем сами грани

\begin{center}
\begin{tabular}{c c c c c c c c c}
 $(0)$ & $E_0$ & $V$ & $E_0$ & \dots & $V$ & $E_{j_1}$ & \dots & $E_{j_{n-m}}$ \\
 $1$ & $E_0$ & $V$ & $V$ & \dots & $V$ & $E_{j_1}$ & \dots & $E_{j_{n-m}}$ \\
 $2$ & $V$ & $E_0$ & $V$ & \dots & $V$ & $E_{j_1}$ & \dots & $E_{j_{n-m}}$ \\ 
 $3$ & $V$ & $V$ & $E_0$ & \dots & $V$ & $E_{j_1}$ & \dots & $E_{j_{n-m}}$ \\
 \dots &  \dots &  \dots &  \dots & \dots &  \dots &  \dots & \dots & \dots \\
 $m$ & $V$ & $V$ & $V$ & \dots & $E_0$ & $E_{j_1}$ & \dots & $E_{j_{n-m}}$.
\end{tabular}
\end{center}

Пусть в положении выбранной грани множества $E_0$ находятся на $i_1, \dots, i_{l}$ позициях. Легко увидеть из таблицы, что тогда такая грань содержит в качестве подмножест только $i_1, \dots, i_{l}$ ориентационные. Подобное свойство, конечно, не должно зависеть от положения и ориентации кубика.

Любое изменение положения выбранной грани влечёт за собой иную расстановку $l$ множеств $E_0$ в записи положения. Но тогда и уже другие ориентационные грани будут содержаться в выбранной, что невозможно согласно замечанию выше.
\end{proof}

\begin{remark}
Обратим внимание, что для угловых есть $n$ ориентационных граней размерности 1. Они не являются обычными гранями в смысле нашего определения и не покрашены в какие-либо цвета. Тем не менее, они однозначно определяют ориентации угловых элементов головоломки.
\end{remark}

На схематическом рисунке \ref{sec3sub3pic1} изображен четырёхмерный куб и приведён пример того, какие ориентационные грани есть у его кубиков

\begin{itemize}
    \item У угловых -- 4 одномерные;
    \item У рёберных -- 3 двумерные;
    \item У двугранных -- 2 трёхмерные;
    \item У трёхгранных -- 1 четырёхмерная.
\end{itemize}

\begin{figure}[h]
    \centering
    \includegraphics[width=0.7\linewidth]{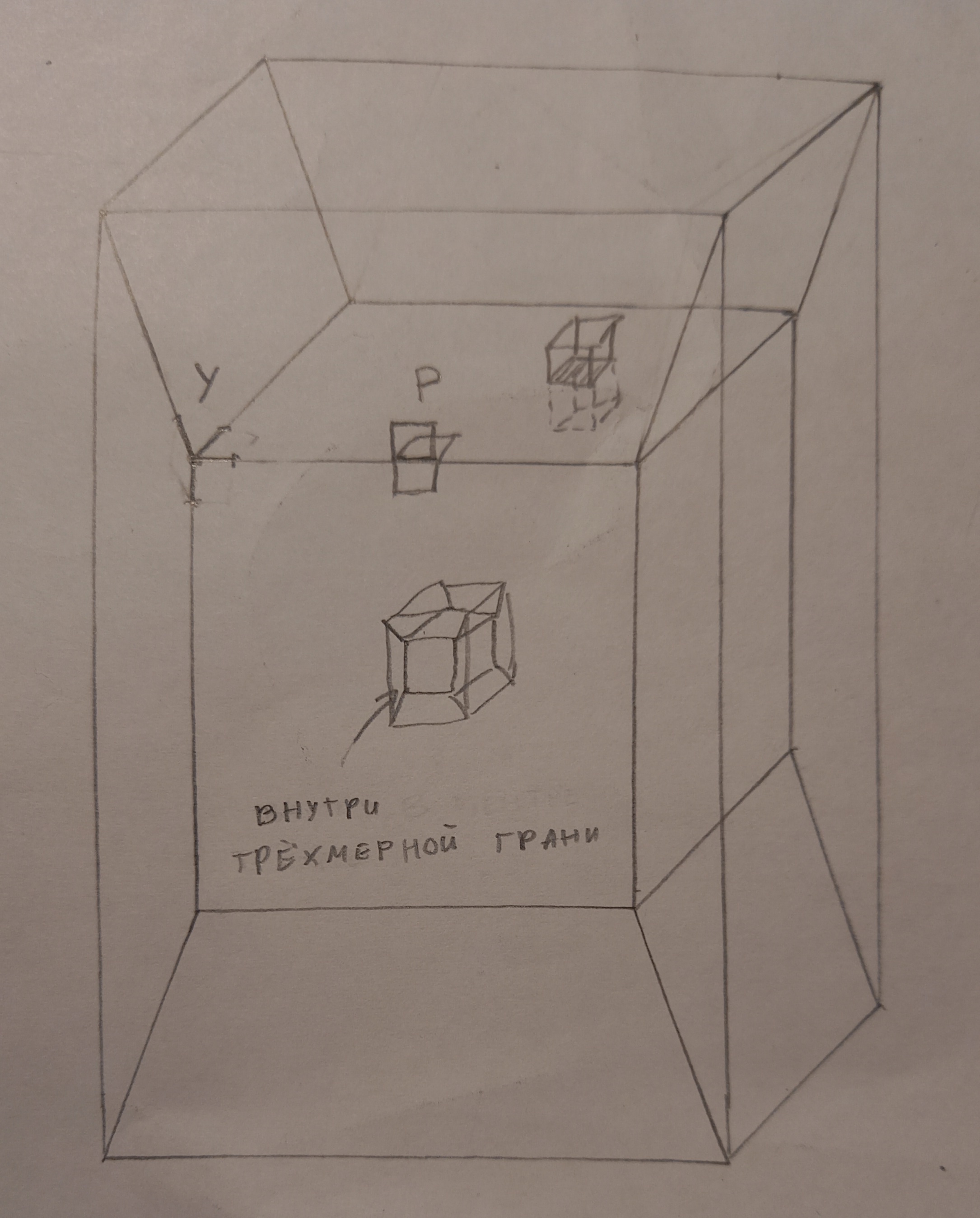}
    \caption{}
    \label{sec3sub3pic1}
\end{figure}

\begin{deff}
Теперь под \textbf{ориентацией} кубика будем понимать совокупность положений всех его ориентационных граней.

\textbf{Состояние кубика} -- это его положение и ориентация.

\textbf{Положением (ориентацией) множества} называется совокупность положений (ориентаций) всех его кубиков.

\textbf{Состоянием множества} называем его положение и ориентацию.

Два состояния множества кубиков называются $\textbf{связанными}$, если можно от одного из них прийти к другому поворотами слоёв Кубика Рубика.
\end{deff}

\subsection{Группы вращения}

Для каждого кубика из $\class, \; m>1$ можно определить \textbf{две группы вращения}, являющиеся подгруппами $S_m$ (группы перестановок $m$ ориентационных граней)

\begin{itemize}
    \item Рассмотрим те самосовмещения кубика в пространстве, при которых ориентационные грани переходят в ориентационные. Все перестановки этих граней, порождённые такими поворотами, образуют \textbf{независимую группу вращения} $\Gr$.
    
    Кубики из одного тривиального класса отличаются лишь покраской граней и расположением в головоломке. Взаимные расположения их ориентационных граней одни и те же, так что данная группа зависит от $m$ и \textbf{не зависит от характеристик класса}.
    
    \item Говорят, что комбинация поворотов \textbf{сохраняет положение кубика}, если она возвращает его на исходную позицию, возможно изменив в процессе его ориентацию и состояния остальных элементов головоломки. При такой комбинации ориентационные грани переходят, конечно, в ориентационные.
    
    Все перестановки ориентационных граней, порождённые сохраняющими положение комбинациями, образуют \textbf{зависимую группу вращения} $\oG$. Данная группа уже \textbf{зависит и от $m$ и от характеристик класса}, так как для вышеприведённых комбинаций поворотов положение играет существенную роль. Зависимая группа является подгруппой независимой.
\end{itemize}

Например, комбинация $RU$ (сначала поворот $R$, затем -- $U$) возвращает угловой кубик трёхмерного Кубика Рубика на свое место и меняет циклически три его ориентационные грани (рис. \ref{sec3sub4pic1}).

\begin{figure}[h]
    \centering
    \includegraphics[width=0.6\linewidth]{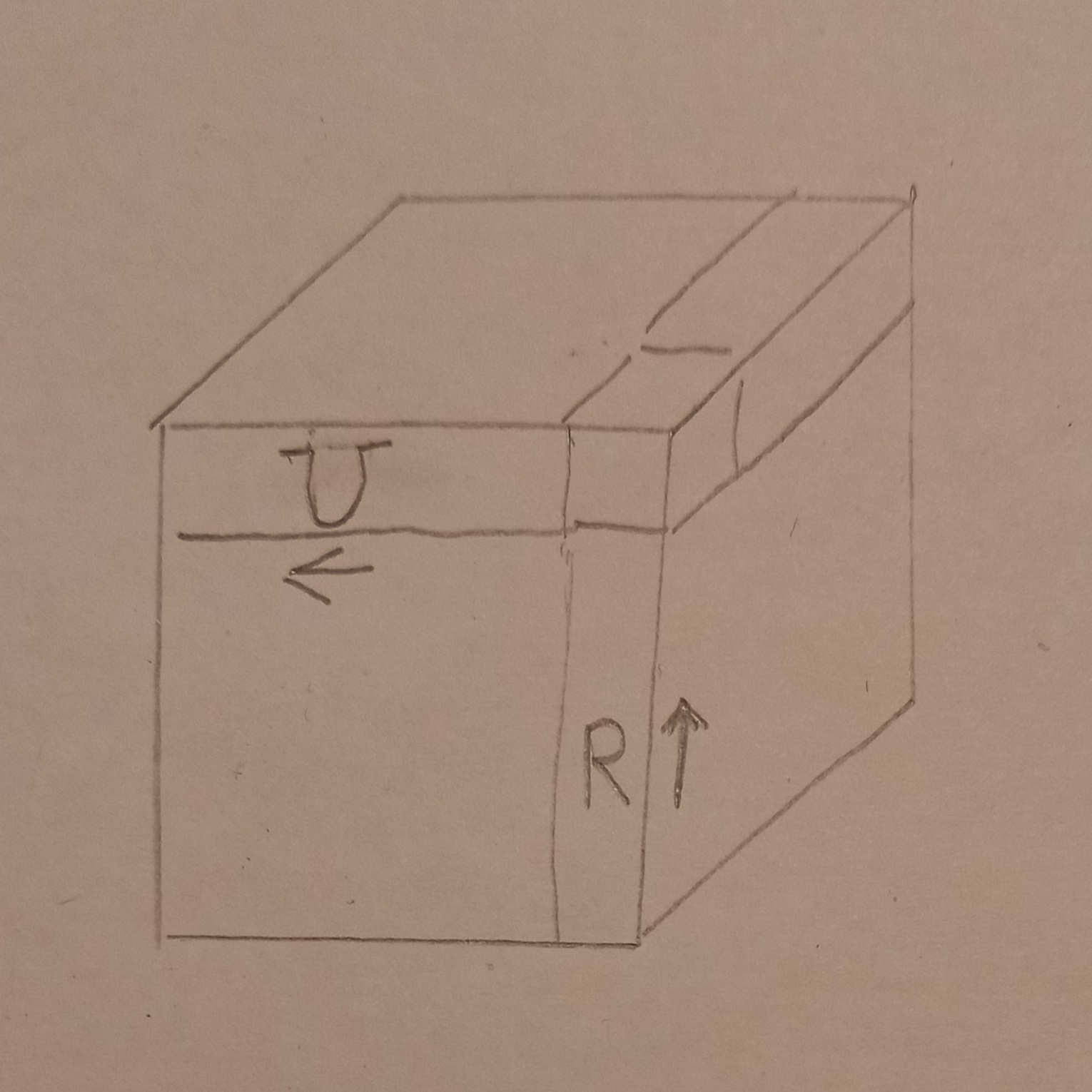}
    \caption{}
    \label{sec3sub4pic1}
\end{figure}

Проделав её ещё раз, получаем уже другую циклическую перестановку. В итоге зависимая группа заведомо содержит $A_3$, изоморфную $\Z_3$. Легко понять, что две грани углового кубика никак не могут поменяться местами, так что $\Gr = \oG \cong Z_3$.

В общем случае эти две группы различны. Например, возьмём нецентрально рёберный элемент трёхмерного Кубика Рубика. Его независимая группа, конечно, изоморфна $Z_2$, но зависимая -- $\{e\}$. То есть, в фиксированном положении такого кубика может быть ровно одна ориентация. Объяснение этого эффекта будет дано в следующей теореме.

У каждого кубика из (1) только одна ориентационная грань, так что говорить об их ориентации быссмысленно.

\begin{twogroups}
Для элемента из $(m)$:
\[
G_{rot} (m)=\begin{cases}
A_m, & m=n, \\
S_m, & m<n.
\end{cases}
\]
Для элемента из $\class$:
\[
\overline{G}_{rot} \class =\begin{cases}
A_m, & m=n \text{ или } j_1 < \dots < j_{n-m} < \md, \\
S_m, & m<n \text{ и } (j_{n-m}=\md \text{ или } \exists p: j_p=j_{p+1}<\md).
\end{cases}
\]

\end{twogroups}

\begin{proof}
\textbf{Зависимая группа.}

Поступим как в теореме \ref{orientation}: выберем класс, возьмём кубик в каноническом положении и докажем теорему для него.

Изобразим таблицу ориентационных граней:

\begin{center}
\begin{tabular}{c c c c c c c c c}
 1 & $E_0$ & $V$ & $V$ & \dots & $V$ & $P_{j_1}$ & \dots & $P_{j_{n-m}}$ \\
 2 & $V$ & $E_0$ & $V$ & \dots & $V$ & $P_{j_1}$ & \dots & $P_{j_{n-m}}$ \\
 3 & $V$ & $V$ & $E_0$ & \dots & $V$ & $P_{j_1}$ & \dots & $P_{j_{n-m}}$ \\
 \dots & \dots & \dots & \dots & \dots & \dots & \dots & \dots & \dots \\ 
 $m$ & $V$ & $V$ & $V$ & \dots & $E_0$ & $P_{j_1}$ & \dots & $P_{j_{n-m}}$.
\end{tabular}
\end{center}
Опять же, для удобства, вместо $\{0\}$ пишем $V$.

\textbf{1. Циклические перестановки} могут быть осуществимы, если найдутся три ориентационные грани, то есть при $m \geq 3$.

Комбинация $\psi_{s,i} \; \circ \; \psi_{i,j}$, где $1 \leq i<j<s \leq m$, сохраняет положение и циклически меняет три ориентационные грани: $i$-ую, $j$-ую и $s$-ую.

Действительно, в упрощённой записи получаем
\[
(\psi_{s,i} \; \circ \; \psi_{i,j}) (E_0 \times E_0 \times E_0) = \psi_{s,i} (E_{k-1} \times E_0 \times E_0) = E_0 \times E_0 \times E_0.
\]

Что касается граней, каждый поворот меняет местами два столбца, а затем в одном из них все элементы -- на противоположные. Если $l$-ый столбец обозначить за $C_l$, то аналогично положению
\[
(\psi_{s,i} \; \circ \; \psi_{i,j}) (C_i,C_j,C_s) = (C_s, C_i, C_j).
\]
При такой смене столбцов, окажется, что новая таблица отличается от старой лишь перестановкой трёх строк, отвечающих $i$-ой, $j$-ой и $s$-ой граням.

Итак, зависимая группа содержит $A_m$.

\textbf{2.} Выясним, когда осуществимы \textbf{транспозиции} ориентационных граней.

\textbf{Пусть соблюдается условие}
\[
m=n \text{ или } j_1 < \dots < j_{n-m} < \md.
\]
Тогда все столбцы различны, с точностью до замены их на противоположные. По той же причине, которая была указана в теореме о классификации, невозможно осуществить транспозицию столбцов. Тогда $\oG=A_m$.

\textbf{Если оно не соблюдается, то} $m<n$ и выполняется одно из двух условий на характеристики. Для каждого из них выпишем комбинацию, реализующую транспозицию столбцов. Тогда, как и в предыдущем пункте, новая таблица будет отличаться от старой перестановкой двух строк.

\begin{itemize}
    \item 
    \[
    j_{n-m}=\md.
    \]
    Тогда для последнего столбца справедливо: $\overline{C}_m=C_m$.
    
    Комбинация $\psi^2_{i,m} \circ \psi_{i,j}$ совершает транспозицию столбцов $i$ и $j$:
    \[
    (\psi^2_{i,m} \circ \psi_{i,j}) (C_i,C_j,C_{m}) = \psi^2_{i,m} (\overline{C}_j,C_i,C_{m})=(C_j,C_i,\overline{C}_{m})=(C_j,C_i,C_{m}).
    \]
    \item
    \[
    \exists p: j_p=j_{p+1} < \md.
    \]
    Тогда $C_{m+p}=C_{m+p+1}$. Также из условия следует, что характеристик как минимум две, а $m \geq 2$, и, следовательно, $n \geq m+2 = 4$. То есть, можно пользоваться поворотами в четырёхмерном пространстве.
    
    $\psi_{m+p+1,m+p} \circ \psi^2_{i,m+p} \circ \psi_{i,j}$ совершает транспозицию столбцов $i$ и $j$:
    \[
    (\psi_{m+p+1,m+p} \; \circ \; \psi^2_{i,m+p} \; \circ \; \psi_{i,j}) (C_i,C_j,C_{m+p},C_{m+p+1}) =
    \]
    \[
    = \psi_{m+p+1,m+p} (C_j,C_i,\overline{C}_{m+p},C_{m+p+1}) = (C_j,C_i,C_{m+p},C_{m+p+1}).
    \]
\end{itemize}
Значит, $\oG=S_m$.

\textbf{Независимая группа.}

\textbf{Пусть $m<n$.} Легко убедиться, что $\Gr$ не зависит не только от характеристик, но и от параметра $k$. Он влияет на расположение кубиков в пространстве, но не на количество и взаимное расположение ориентационных граней.

Из этих соображений следует, что $\Gr (m)$ содержит зависимую группу $\overline{G}_{rot} \class$ при любых характеристиках и $k$. Выберем нечётное $k$ и $j_{n-m}=\mdd$. Тогда $\Gr$ содержит $S_m=\overline{G}_{rot} \class$ и, следовательно, совпадает с ней. 

\textbf{Пусть $m=n$ (угловые).} Осуществить транспозицию двух ориентационных граней невозможно. Пойдём от противного. Пусть, например, первая и вторая ориентационные грани поменялись местами.

Каждая ориентационная грань одномерна и является отрезком. Все такие отрезки исходят из одной вершины. Выберем какую-либо их тройку, содержащую первый и второй отрезки. Их транспозиция приводит к смене ориентации тройки: левая тройка становится правой, и наоборот. Это, конечно, не может случиться при вращении в пространстве.
\end{proof}

\begin{deff}
Будем называть класс \textbf{симметрическим (знакопеременным)}, если зависимая группа вращения его элементов $S_m$ (соответственно, $A_m$).
\end{deff}

\subsection{Ориентационная перестановка}
Временно забудем про цвета и пронумеруем все ориентационные грани каждого кубика из $(m)$ числами от 1 до $m$. В свою очередь, каждому положению в головоломке сопоставим \textbf{правильную ориентацию}: то, как должны располагаться пронумерованные грани. Заметим, что мы пока не требует связанности кубика текущей ориентацией и кубика правильной ориентацией.

\begin{deff}
Для некоторого кубика из $(m)$ за $i_j$ обозначим номер грани в текущей ориентации, вместо которой должна быть $j$-ая согласно правильной. \textbf{Ориентационной перестановкой} $\phi \in S_m$ называется перестановка, действующей по правилу
\[
\phi(j)=i_j.
\]
При правильной ориентации перестановка тождественна.
\end{deff}

\begin{orientationlemma}
Пусть совершён поворот $\psi_{ij}$ произвольного слоя. 

1. Если положение кубика не меняется, то и его ориентационная перестановка тоже.

2. Если он циклически переставляется с другими, то пронумеруем их все так, чтобы первый переходил в положение второго, \dots, четвёртый -- в положение первого.

Обозначим за $\alpha_l$ ориентационную перестановку $l$-го кубика после поворота, если бы изначально его ориентация была правильной. $\alpha_l$ будем кратко называть \textbf{переходом}. $\phi_l$ и $\phi_l^{'}$ пусть будут его старой и новой перестановками.

Тогда справедливо:
\[
\phi_l^{'}=\phi_l \alpha_l,
\]
\[
\alpha_4 \alpha_3 \alpha_2 \alpha_1 = e.
\]

\end{orientationlemma}

\begin{proof}
1. Неизменность положения говорит о том, что его запись в упрощённом виде
\[
E_\md \times E_\md.
\]
Тогда в таблице ориентационных граней $i$-ый и $j$-ый столбцы будут состоять из множеств $E_\md$. Следовательно, поворот $\psi_{ij}$ не изменит таблицу и положений граней.

2. Выберем один из четырёх кубиков с переходом $\alpha$. Пусть $\alpha(i)=j$. Тогда грань $j$ кубика правильной ориентации перемещается туда, где должна быть грань $i$ в новой правильной ориентации (см. рис. \ref{sec3sub5pic1}). Значит, $\phi(j)$ перемещается на тоже самое место и
\[
\phi^{'}(i)=\phi(j)=\phi(\alpha(i)).
\]
Четырёхкратный поворот -- тождественный поворот и, следовательно,
\[
\phi_1 \alpha_4 \alpha_3 \alpha_2 \alpha_1 = \phi_1.
\]
для любой $\phi_1$, в частности, и для $\phi_1=e$.
\end{proof}

\begin{figure}[h]
    \centering
    \includegraphics[width=0.8\linewidth]{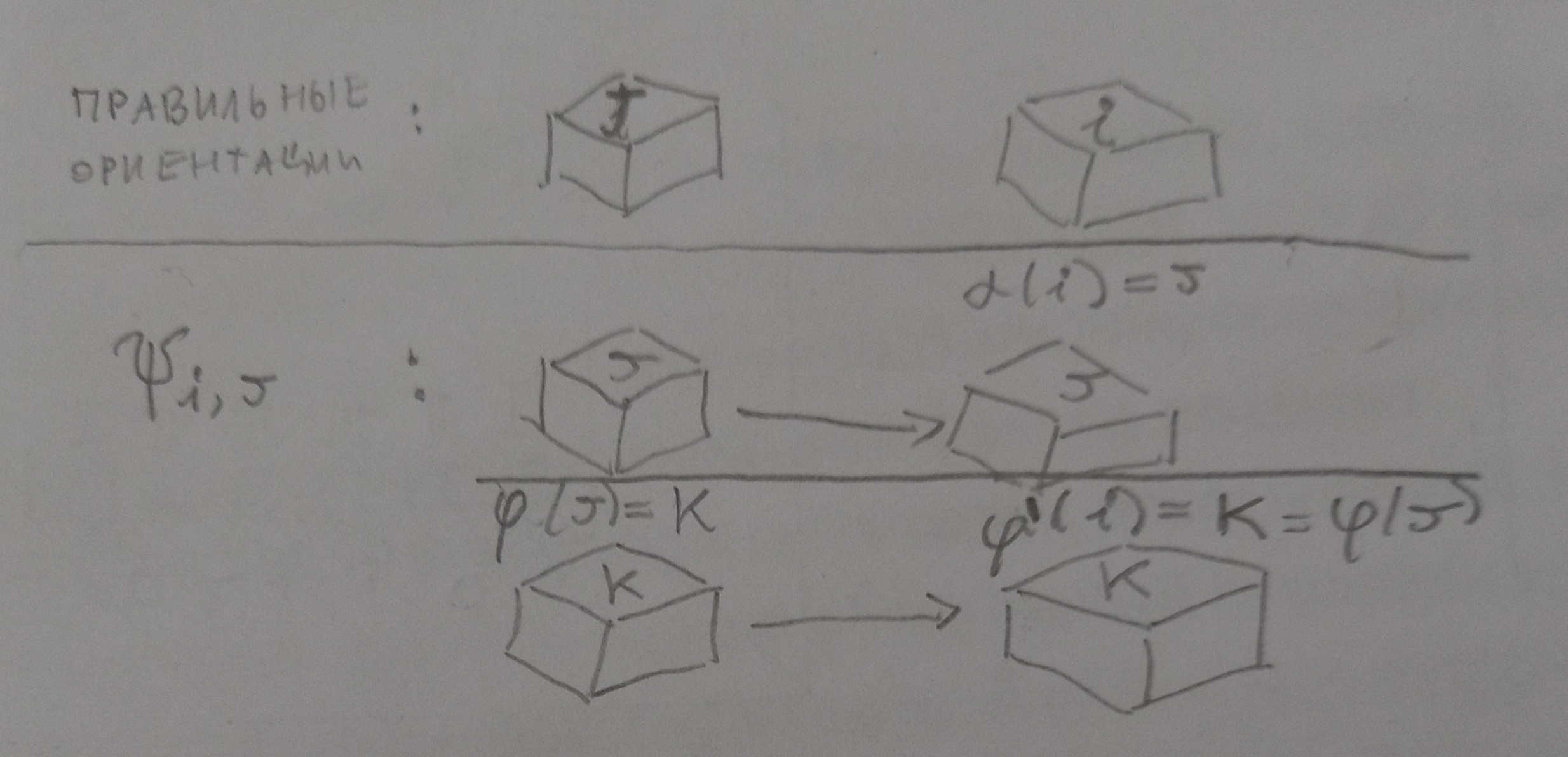}
    \caption{}
    \label{sec3sub5pic1}
\end{figure}

$\phi$ зависит от способа нумерации граней и выбора правильных ориентаций. Очевидно, что с ней было бы легче работать, будь она перестановкой из зависимой группы. Иными словами, если бы каждая правильная ориентация была достижима из нынешней ориентации.

\begin{lemma}
Существует способ нумерации граней и выбора правильных ориентаций в Кубике Рубика, при котором все ориентационные перестановки -- элементы зависимой группы. Более того, это свойство сохраняется при вращении слоёв головоломки.
\end{lemma}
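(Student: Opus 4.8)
The plan is to translate the statement into a condition on the transitions $\alpha$ and then to fix the correct orientations by \emph{transporting} one reference orientation through the layer rotations. First I would invoke the orientation lemma. If a rotation $\psi_{ij}$ fixes a cubie's position, its orientation permutation $\phi$ is unchanged; if it moves the cubie, then $\phi^{'}=\phi\,\alpha$. Since $\oG$ is a subgroup of $S_m$, the equality $\phi^{'}=\phi\,\alpha$ shows that the property ``$\phi\in\oG$ at every position'' survives a rotation exactly when every transition $\alpha$ lies in $\oG$. Hence it is enough to produce a numbering and a choice of correct orientations for which (i) every transition lies in $\oG$ and (ii) the solved state already has $\phi\in\oG$ everywhere; closedness under rotations then spreads the property to all reachable states.

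I would build the correct orientations one class at a time. When $m=1$ there is a single orientational face, $S_1$ is trivial, and nothing is to be done — this also covers the special classes, which occur only for $m=1$. For $m\ge 2$ the classification theorem guarantees that the positions of $\class$ form one connected set, so the \emph{position graph}, with an edge $p\!-\!p^{'}$ whenever some $\psi_{ij}$ carries $p$ to $p^{'}$, is connected. Fix any numbering of the orientational faces (membership in $\oG$ is insensitive to it, since $\oG$ is $A_m$ or $S_m$, both normal in $S_m$), single out the canonical position $P$, declare its correct orientation to be its solved orientation, choose a spanning tree of the position graph, and propagate correct orientations outward by applying, along each tree edge, the rotation that realises it. By construction the transition along every tree edge is the identity.

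Now take an arbitrary rotation sending $p$ to $p^{'}$. Travelling from $p^{'}$ to $p$ along the tree and then back to $p^{'}$ by this rotation is a combination preserving the position $p^{'}$, so the permutation of orientational faces it induces is, by definition, an element of $\oG$; as the tree transitions are trivial, this element is precisely the transition we started with. Thus every transition lies in $\oG$, giving (i). For symmetric classes ($\oG=S_m$) this is empty; the real content is the alternating case $\oG=A_m$, where the assertion is that each transition is an \emph{even} permutation, and it is the evenness of all position-preserving holonomies that makes the tree propagation consistent.

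For (ii) I note that, because every transition lies in $\oG$, the coset $\phi\,\oG$ borne by each cubie is a rotation invariant, and the same holonomy argument shows that the orientation a cubie displays at a given position is determined modulo $\oG$ across all reachable states; as each correct orientation was obtained by transporting a solved orientation along genuine rotations, it represents this very coset, so $\phi\in\oG$ throughout the solved state. The main obstacle is exactly the bookkeeping compressed into this last step: one has to check that all rotations realising a fixed pair $(p,p^{'})$ give transitions in a single $\oG$-coset (they differ by a position-preserving holonomy), that the loop used above genuinely closes up so that its induced permutation is an honest element of $\oG$ rather than a mere conjugate — here the normality of $A_m$ and $S_m$ in $S_m$ is used — and that the transported representatives agree with the solved ones modulo $\oG$. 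This is the ``transition cocycle is a coboundary'' computation underlying the whole argument.
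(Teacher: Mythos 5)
Your step (i) is correct and is, in substance, the paper's own argument in different clothing: the paper fixes for each position $i$ a combination $q_i$ carrying the cubie at $i$ to the canonical position $P$, and then exhibits the transition of a rotation $\psi$ taking position $i$ to position $j$ as the face permutation induced by the position-preserving combination $q_j q_i^{-1}\psi$; your spanning-tree paths simply play the role of the $q_i$, and the loop ``tree path, then $\psi$'' is the same holonomy. The genuine problem is step (ii), and it sits exactly in the parenthetical claim that ``membership in $\oG$ is insensitive to the numbering, since $\oG$ is normal in $S_m$''. Renumbering the faces of the cubie occupying a position $p$ by a permutation $\tau$, while keeping the correct orientations fixed, replaces its orientation permutation $\phi_p$ by the one-sided product $\tau\phi_p$, not by the conjugate $\tau\phi_p\tau^{-1}$; this moves $\phi_p$ out of $\oG$ whenever $\tau\notin\oG$, and normality is irrelevant. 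In your construction the correct orientation at $p$ is transported from the solved orientation of the cubie sitting at $P$, whereas the cubie actually sitting at $p$ carries an unrelated, arbitrarily chosen numbering, so $\phi_p$ can be an arbitrary element of $S_m$.

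This is not a repairable bookkeeping detail but a real failure for alternating classes. Take, say, the corner class, where $\oG=\Gr=A_m$: choose the arbitrary numbering so that some $\phi_p$ is an odd permutation. By your own step (i) every transition lies in $A_m$, so the coset $\phi_p A_m$ is invariant under all rotations, and $\phi_p$ remains odd in every reachable state; the conclusion of the lemma is then false for that numbering, so ``fix any numbering'' cannot work. Your argument for (ii) conflates two different cubies: transport invariance constrains the cubie that was moved from $P$ (whose $\phi$ is indeed $e$ when it reaches $p$ along the tree), but says nothing about the distinct cubie that occupies $p$ in the solved state. The fix is precisely what the paper does: the numbering itself must be produced by transport --- move each cubie to $P$ by some $q_i$, number its faces there so that its orientation agrees with the chosen correct orientation of $P$, return it by $q_i^{-1}$, and only then declare the solved-state orientations to be the correct ones, so that every $\phi$ equals $e$ initially and step (i) keeps all $\phi$ inside $\oG$ thereafter.
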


\begin{proof}
Выберем произвольный класс и докажем для него.

\textbf{Нумерация граней и выбор правильных ориентаций.}

Выберем произвольную правильную ориентацию для $P$.

Пронумеруем все положения. Кубик в положении $i$ переместим в $P$ некоторой комбинацией $q_i$. Пронумеруем грани так, чтобы ориентация была правильной, и вернём на свое место комбинацией $q^{-1}_i$. Проделаем так для каждого $i$.

Итак, все грани пронумерованы. Осталось сопоставить положениям ориентации так, что все ориентации кубиков стали правильными. Тогда ориентационные перестановки тождественны и принадлежат зависимой группе.

\textbf{Инвариантность относительно вращения слоёв.}

Докажем, что все переходы в головоломке являются элементами зависимой группы. Тогда все новые ориентации также будут принадлежать $\oG$ согласно лемме об ориентации.

Обозначим за $\psi$ поворот, переводящий кубик положения $i$ в $j$, а за $\alpha$ соответствующий переход.

Комбинация поворотов $q_j q^{-1}_i \psi$ переводит
\begin{itemize}
    \item кубик в $P$, $\phi_P=e$ согласно выбору $q_i$ и ориентаций.
    \item затем в $i$-ое, где $\phi_i=e$ снова.
    \item в $j$ поворотом $\psi$, где $\phi_j = \alpha$.
\end{itemize}
Так как новая ориентация достижима из старой, то $\alpha=\phi_j \in \oG$.
\end{proof}

\subsection{Кластеры}

\begin{deff}
Назовём кубик \textbf{уникальным}, если нет других кубиков того же класса с такой же раскраской граней.

\textbf{Кластер} -- это набор всех кубиков одного класса с одинаковой раскраской. В таком наборе должно быть больше одного кубика. Иначе говоря, множество из одного уникального кубика кластером не считается.

\textbf{Тривиальным кластером} называем кластер кубиков, у которых одна ориентационная грань, то есть они из тривиального класса $(1)$.

\textbf{Знакопеременный кластер} -- кластер, группа вращения кубиков которого знакопеременна. Аналогично определяется \textbf{симметрический}. Понятно, что знакопеременный класс содержит только знакопеременные кластеры, а симметрический -- симметрические.
\end{deff}

На рисунке \ref{sec3sub6pic1} приведены примеры в трёхмерном кубе: фиолетовым отмечен нетривиальный кластер, голубым -- тривиальный. Все нетривиальные кластеры в трёхмерном кубе ограничиваются парами нецентрально-рёберных кубиков, в 'многомерье' же их видов намного больше.

\begin{figure}[h]
    \centering
    \includegraphics[width=0.5\textwidth]{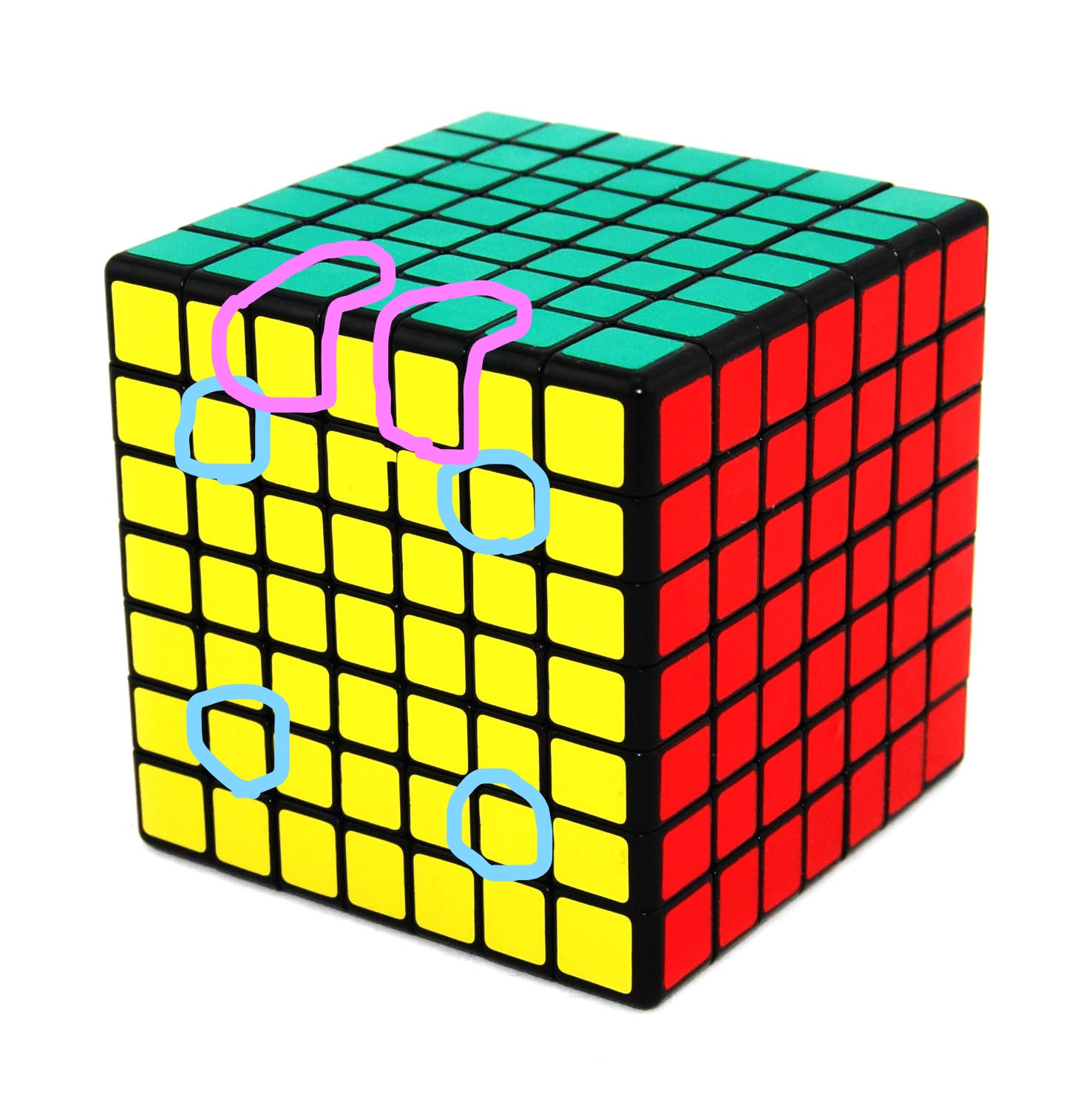}
    \caption{}
    \label{sec3sub5pic1}
\end{figure}

Таким образом, иерархия элементов головоломки такая:

\begin{itemize}
    \item все кубики делятся на тривиальные классы
    \item тривиальные классы -- на классы
    \item классы -- на кластеры или на уникальные кубики.
\end{itemize}

Ранее были описаны все возникающие классы и тривиальные классы. Теперь приступим к кластеров.

\begin{cluster}
Пусть задан класс $\class$. Он распадается на подмножества
\[
Cl(\sigma,V_1,\dots,V_m,j_1,\dots,j_{n-m}),
\]
каждое из которых состоит из всех кубиков с положениями
\[
\prd{E_{V_1} \dots E_{V_m} P_{k_1} \dots P_{k_{n-m}}}.
\]
Здесь, конечно, $\sigma, V_1,\dots,V_m$ и $j_1,\dots,j_{n-m}$ зафиксированы, а $k_1,\dots,k_{n-m}$ -- это характеристики $j_1,\dots,j_{n-m}$, записанные в произвольном порядке. То есть, положения кубиков из $Cl$ отличаются лишь перестановками множеств $P_j$.

Что касается особого класса $\spec$, он разбивается на
\[
Cl(\sigma,V_1,\dots,V_m,j_1,\dots,j_{n-m},q).
\]
Каждое состоит из кубиков подмножества
\[
Cl(\sigma,V_1,\dots,V_m,j_1,\dots,j_{n-m}),
\]
связанных с $P$, если $q=1$, и не связанных, если $q=-1$.

Если выбранный класс не является центральным, то каждое такое подмножество $Cl$ является кластером. Иначе, каждый $Cl$ состоит из одного уникального кубика.
\end{cluster}

\begin{proof}
\textbf{1.} Выберем класс $\class$ и докажем утверждение для него. Положения его кубиков, как мы знаем
\[
\prd{E_{V_1} \dots E_{V_m} P_{j_1} \dots P_{j_{n-m}}}.
\]
Легко увидеть, что каждый кубик класса попадает в некоторое
\[
Cl(\sigma,V_1,\dots,V_m,j_1,\dots,j_{n-m}),
\]
а сами подмножества не пересекаются.

\textbf{2.} Если класс является центральным, то каждое $Cl$ состоит всего из одного кубика
\[
\prd{E_{V_1} \dots E_{V_m} E^{n-m}_{\md}}.
\]
Если же нет, то минимальная характеристика $j_1$ отлична от $\mdd$ и множество $P_{j_1}$ может принимать одно из двух значений: $E_{j_1}$ или $E_{k-j_1}$. Тогда в подмножестве найдутся хотя бы два элемента.

\textbf{3.} Осталось показать, что у каждого $Cl$ своя раскраска.

Для определённости возьмём 
\[
Cl(e,V_1,\dots,V_m,j_1,\dots,j_{n-m}),
\]
положения кубиков которого
\[
E_{V_1} \times \dots \times E_{V_m} \times P_{k_1} \times \dots \times P_{k_{n-m}}.
\]
Каждый пересекается со следующими $m$ $(n+1-m)$-мерными гранями куба.
\begin{center}
\begin{tabular}{c c c c c c c}
 $E_{V_1}$ & $V_2$ & \dots & $V_m$ & $E$ & \dots & $E$ \\
 $V_1$ & $E_{V_2}$ & \dots & $V_m$ & $E$ & \dots & $E$ \\
 $V_1$ & $V_2$ & \dots & $V_m$ & $E$ & \dots & $E$ \\
 \dots & \dots & \dots & \dots & \dots & \dots & \dots \\ 
 $V_1$ & $V_2$ & \dots & $E_{V_m}$ & $E$ & \dots & $E$.
\end{tabular}
\end{center}
То, в какие цвета окрашены эти грани, однозначно определяет в какие цвета окрашены кубики.

Заметим, что множество всех кубиков из выбранного класса, которые пересекаются с данными $(n+1-m)$-мерными гранями, должно совпасть с
\[
Cl(e,V_1,\dots,V_m,j_1,\dots,j_{n-m}).
\]
Действительно, первые $m$ множеств обязаны быть $E_{V_1} \times \dots \times E_{V_m}$. Остальные могут произвольными, но так как кубики из класса $\class$, то они должны быть $P_{j_i}$.

Выходит, что кубики окрашены одинаково тогда и только тогда, когда они принадлежат одному подмножеству $Cl$.

Доказательство для особых классов проводится аналогично.
\end{proof}

\begin{cons}
Знакопеременный класс разбивается на $\binom{n}{m} \cdot 2^m$ знакопеременных кластеров.

Количество элементов в каждом из них
\[
Cl(\sigma,V_1,\dots,V_m,j_1,\dots,j_{n-m})
\]
одинаково и равно $(n-m)! \cdot 2^{n-m}$.
\end{cons}

\begin{proof}

\textbf{1.} Выберем произвольный кластер из класса, положения кубиков в нём
\[
\prd{E_{V_1} \dots E_{V_m} P_{k_1} \dots P_{k_{n-m}}}.
\]
Так как он знакопеременный, то $m < n$ и
\[
j_1 < \dots < j_{n-m} < \md.
\]
То есть все характеристики различны. Тогда можно реализовать
\[
(n-m)! \cdot 2^{n-m}
\]
положений, где
\begin{itemize}
    \item $(n-m)!$ -- количество перестановок множеств $P_{j_i}$.
    \item $2^{n-m}$ -- количество способов выбрать из каждого $P_{j_i}$ множество $E_{j_i}$ либо $E_{k-j_i}$.
\end{itemize}

\textbf{2.} Общее количество элементов в знакопеременном классе
\[
\frac{n!}{m!} \cdot 2^n.
\]
\begin{itemize}
    \item $\frac{n!}{m!}$ -- количество способов расстановки $n$ множеств, из которых $(n-m)$ различных $P_j$ и $m$ множеств $E_V$.
    \item $2^n$ -- множитель, связанный с тем, что из каждого $E_V$ и $P_j$ мы выбираем одно из двух множеств.
\end{itemize}
Осталось разделить количество элементов класса на количество элементов кластера и получить
\[
\binom{n}{m} \cdot 2^m.
\]
\end{proof}

Кластеры можно располагать на своём месте разными способами в виду неотличимости элементов. Этот факт существенен, когда речь идёт об ориентировании знакопеременных кластеров.

\begin{example}
На рисунке \ref{sec3sub6pic2} изображён знакопеременный кластер. На первый взгляд изменить его состояние невозможно, ведь, как было показано, зависимая группа его элементов состоит только из тождественной перестановки.

Однако стоит поменять местами кубики кластера, как его ориентация изменится. Внешнее сходство элементов позволило перейти от одного состояния к другому, которое казалось несвязанным с изначальным.
\end{example}

\begin{figure}[h]
    \centering
    \includegraphics[width=0.9\textwidth]{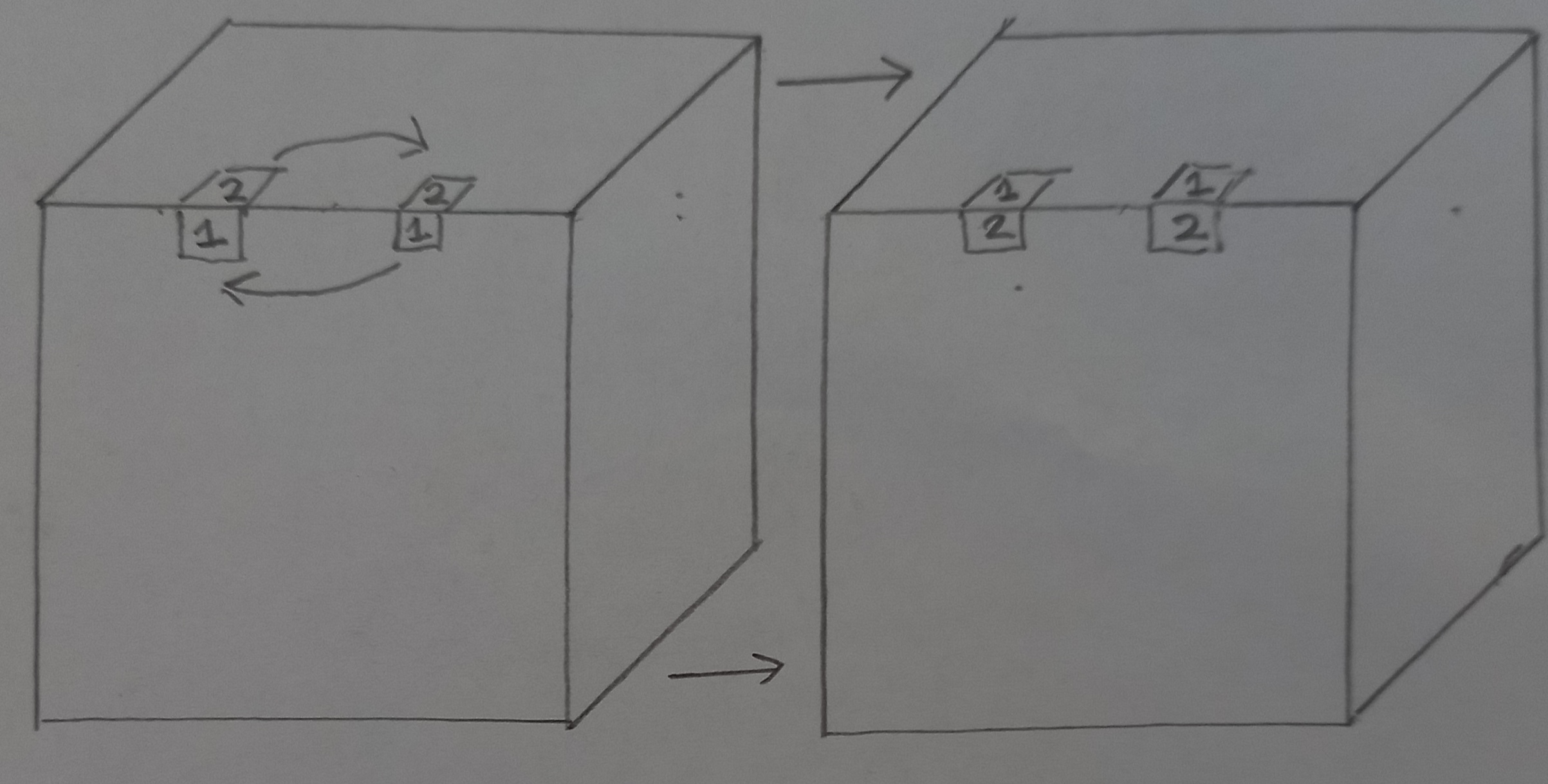}
    \caption{}
    \label{sec3sub6pic2}
\end{figure}

\section{Аккуратные комбинации}

\begin{deff}
$\textbf{Аккуратная перестановочная комбинация}$ совокупности кубиков -- это комбинация поворотов, изменяющая положения только её кубиков. При этом состояния остальных элементов Кубика Рубика не меняются. Такая комбинация называется $\textbf{чётной}$, если перестановка совокупности чётна.

$\textbf{Аккуратная ориентационная комбинация}$ совокупности изменяет ориентации её кубиков, не меняет их положений и состояний всех остальных элементов головоломки.

Действие комбинации на кубике \textbf{тривиально}, если его состояние не меняется.

$\textbf{Областью действия}$ комбинации считаем все положения кубиков, для которых действие данной комбинации нетривиально.
\end{deff}

В этом разделе все классы, о которых пойдёт речь, не являются каркасными. Последние будут подробно изучены позже. Также мы будем использовать два способа обозначения комбинаций. Первый уже был использован ранее и заключался в том, что комбинации обозначались прописными латинскими буквами.

В этом разделе будут приведены наглядные примеры перестановки и ориентирования кубиков. На рисунках повороты будут обозначены заглавными латинскими буквами, например, $R$. Также будет указано стрелкой, в какую сторону необходимо совершить этот поворот. Если в комбинации потребуется совершить поворот в противоположном направлении, то это будет обозначено как $R^{-1}$.

Прежде чем приступить к изучению аккуратных комбинаций, сформулируем следующее простое и полезное утверждение.

\begin{carefulcomb}
Пусть даны $n$ кубиков и

\begin{itemize}
    \item $q$ -- аккуратная перестановочная (ориентационная) комбинация для $n$ кубиков в головоломке, стоящих в определённых позициях.
    \item $p$ переводит $n$ кубиков из их исходных положений в область действия $q$.
\end{itemize}

Тогда комбинация
\[
w=pqp^{-1}
\]
аккуратно переставляет (ориентирует) выбранные кубики в их исходных положениях.
\end{carefulcomb}

Его доказательство очевидно.

\subsection{Перестановочные комбинации}

Сформулируем главную теорему перестановок в Кубике Рубика.

\begin{Classpermutation}
Можно осуществить любую аккуратную чётную перестановку произвольного класса.
\end{Classpermutation}

Доказательство теоремы будет мгновенно вытекать из следующих вспомогательных лемм.

\begin{lemma} \label{3dimpermutation}
В трёхмерном кубике Рубика можно осуществить любую аккуратную чётную перестановку произвольного класса.
\end{lemma}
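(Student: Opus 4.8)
The plan is to reduce the statement to the construction of a single careful $3$-cycle inside each class and then to spread it over the whole class.

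First I would use that the alternating group $A_N$ on $N$ letters is generated by $3$-cycles, together with the observation that a composition of careful permutation combinations is again careful: every factor leaves the state of each cubie outside the class unchanged, hence so does the product, while on the class the factors compose to the product of the underlying permutations. So it suffices to realize, carefully, an arbitrary $3$-cycle of three cubies of a given class.

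Next I would pass from ``arbitrary $3$-cycle'' to ``one fixed $3$-cycle''. By the classification theorem all cubies of a single class are connected, so for any ordered triple of them there is a combination $p$ sending these three cubies into the positions moved by a fixed, explicitly constructed careful $3$-cycle $q$. The careful-permutation principle then makes $w = p\,q\,p^{-1}$ a careful combination acting as the required $3$-cycle on the original positions (one uses $q^{-1}$ if the opposite cyclic order is wanted). Thus a single careful $3$-cycle per class, combined with the first reduction, yields every careful even permutation.

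The core of the proof, and the step I expect to be the main obstacle, is the explicit construction of one careful $3$-cycle for each class of the three-dimensional cube of arbitrary edge length $k$: the corner class $(3)$, each edge class $(2,j_1)$ with $j_1\in L$, and each face class $(1,j_1,j_2)$ (the framework class is excluded in this section). I would build each one as a commutator $q=\psi_a\psi_b\psi_a^{-1}\psi_b^{-1}$, in which $\psi_a$ is a layer rotation and $\psi_b$ is a rotation framed by setup moves (a conjugated rotation), chosen so that the two areas of action meet, on the whole puzzle, in exactly one cubie of the class. Whenever two such maps overlap in a single cubie, their commutator is a genuine $3$-cycle: every cubie lying in at most one of the two areas is carried out and returned to its original position and orientation, while the three cubies at the shared position are cycled; and since rotations preserve classes, these three lie in one class. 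The real labour is to exhibit such a pair uniformly in $k$ and for every admissible depth — outer face turns already suffice for the corners, whereas the interior edge and face pieces force internal-layer (slice) turns, and hence setup moves, to shrink the overlap to a single cubie — and to check in each case that nothing outside the intended triple is displaced or reoriented. Once this is achieved for one representative triple in each class, the two reductions above finish the proof.
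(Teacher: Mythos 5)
Your proposal is correct and follows essentially the same route as the paper: reduce careful even permutations to careful $3$-cycles, use conjugation (the careful-permutation principle) to carry an arbitrary triple of the class into the area of action of one fixed $3$-cycle, and realize that $3$-cycle as a commutator of two combinations whose areas of action share exactly one cubie. The paper's explicit combination $\left[\left[R,D\right],U\right]$, applied in four pictured cases (corners, edges, and face cubies on and off the layer diagonals, with inner-layer turns for the deep classes), is precisely such a single-overlap commutator, so your general lemma is exactly the mechanism behind the verification the paper declares trivial.
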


\begin{proof}
Достаточно доказать, что осуществима любая циклическая перестановка трёх кубиков. Для этого рассмотрим четыре случая, и для каждого из них явно укажем искомую комбинацию. Проверка правильности этих комбинаций тривиальна.

Пусть три кубика из класса
\begin{itemize}
    \item угловых (рис. \ref{sec4sub1pic1})
    \item рёберных (рис. \ref{sec4sub1pic2})
    \item двугранных, которые расположены на диагоналях внешних слоёв (рис. \ref{sec4sub1pic3})
    \item двугранных, которые расположены вне диагоналей внешних слоёв (рис. \ref{sec4sub1pic4})
\end{itemize}
расположены как на соответствующих рисунках.

\begin{figure}[h]
\begin{center}
\begin{minipage}[h]{0.4\linewidth}
\includegraphics[width=1\linewidth]{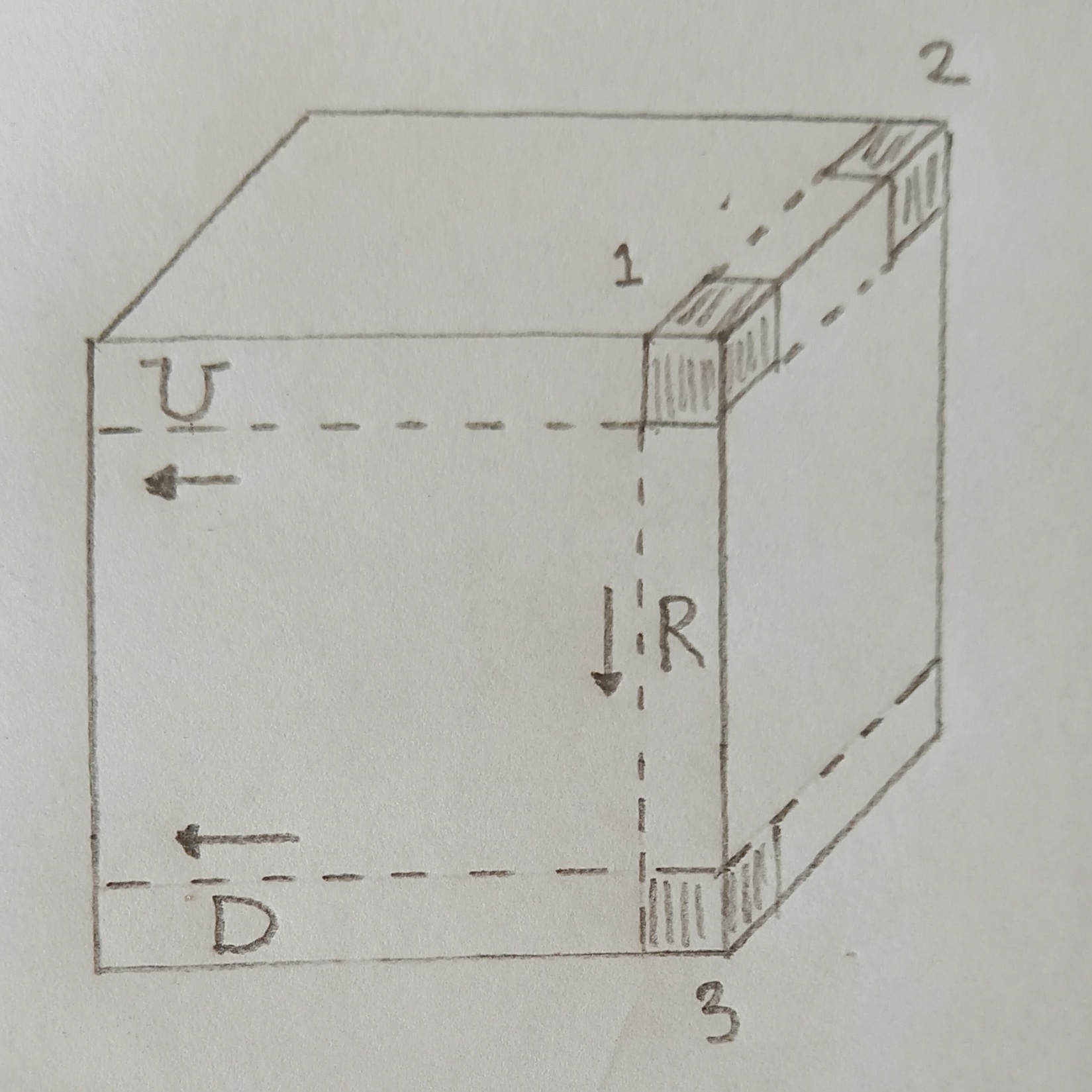}
\caption{}
\label{sec4sub1pic1}
\end{minipage}
\begin{minipage}[h]{0.4\linewidth}
\includegraphics[width=1\linewidth]{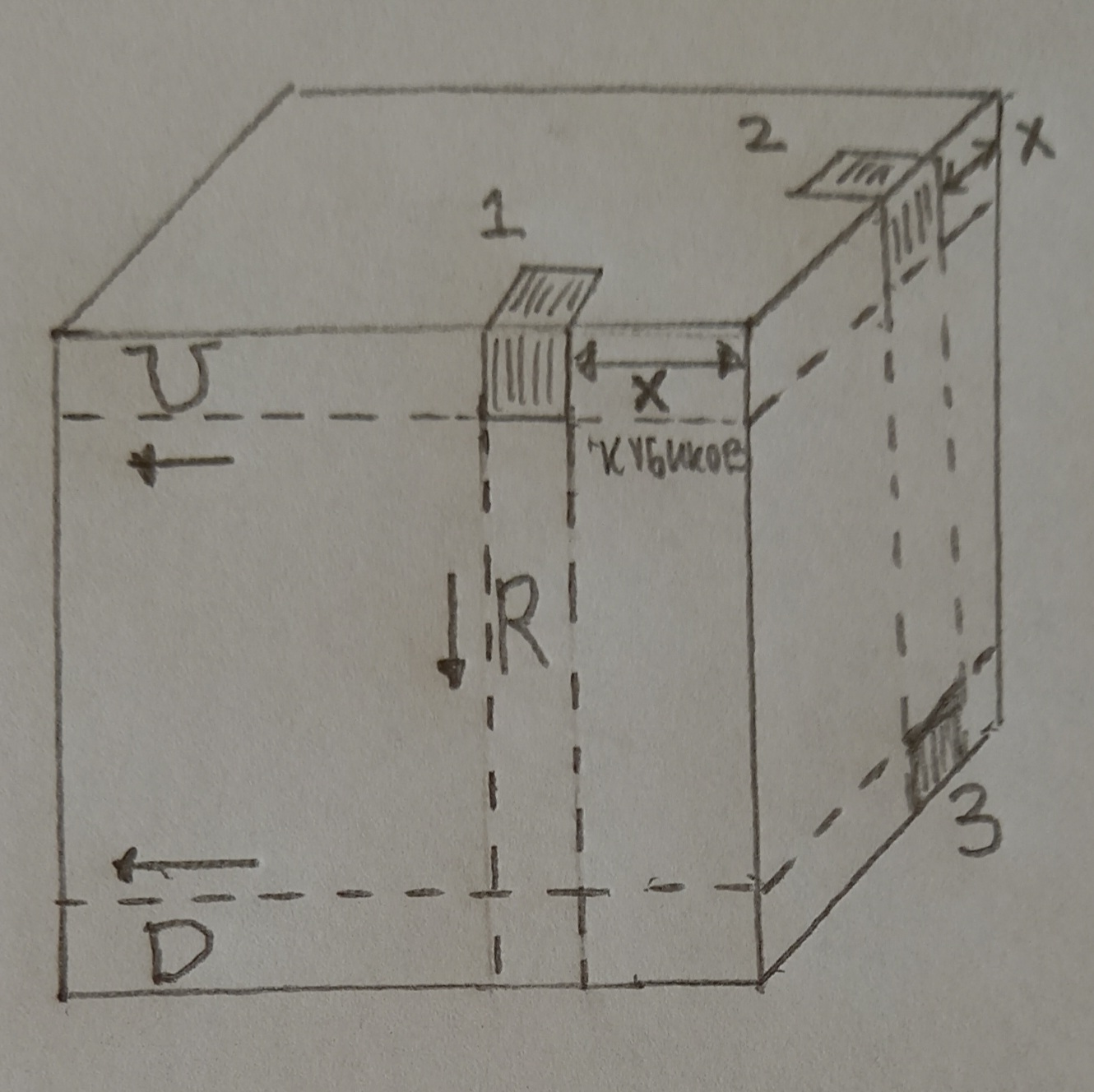}
\caption{}
\label{sec4sub1pic2}
\end{minipage}
\end{center}
\end{figure}

\begin{figure}[h]
\begin{center}
\begin{minipage}[h]{0.4\linewidth}
\includegraphics[width=1\linewidth]{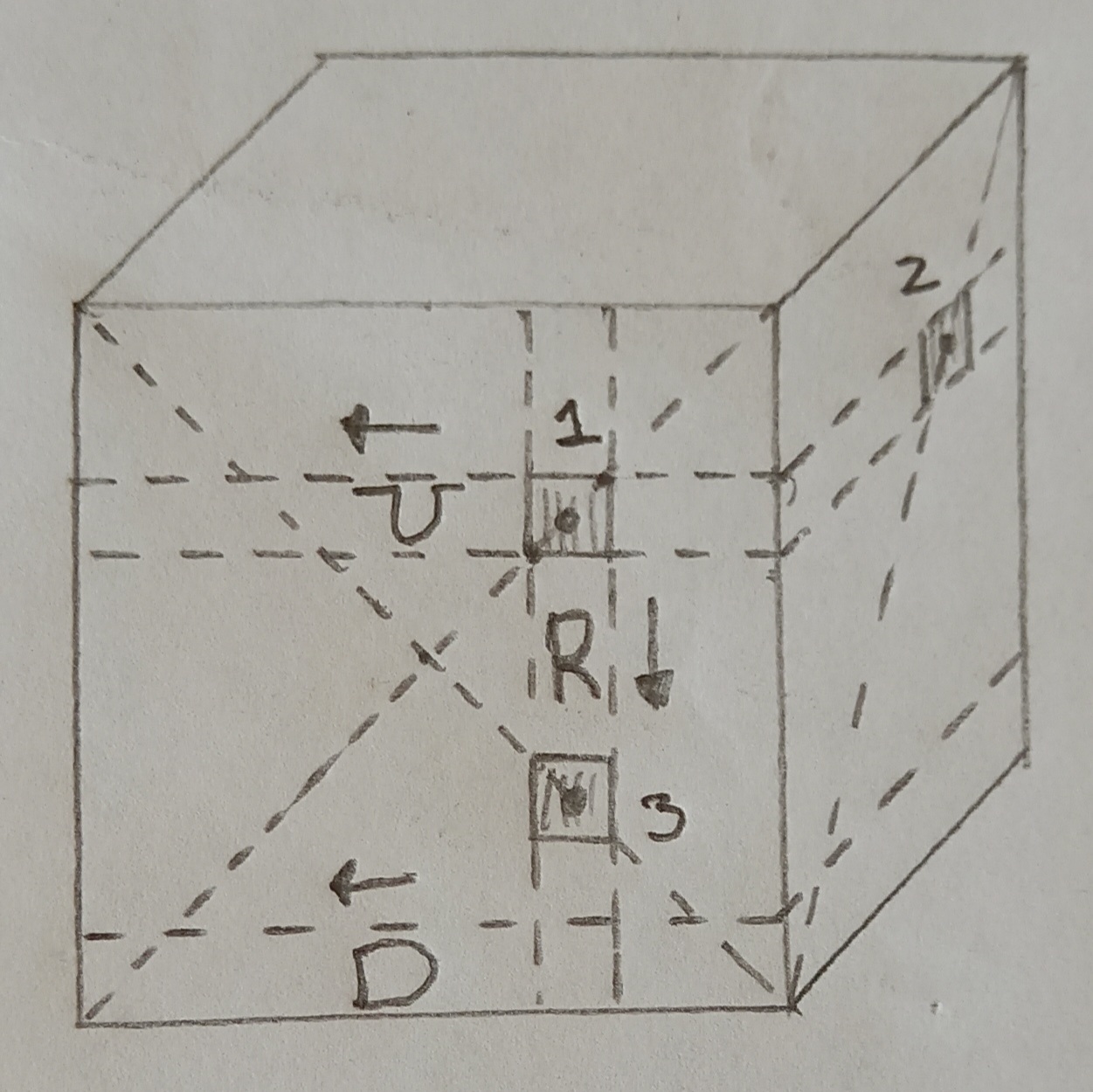}
\caption{}
\label{sec4sub1pic3}
\end{minipage}
\begin{minipage}[h]{0.4\linewidth}
\includegraphics[width=1\linewidth]{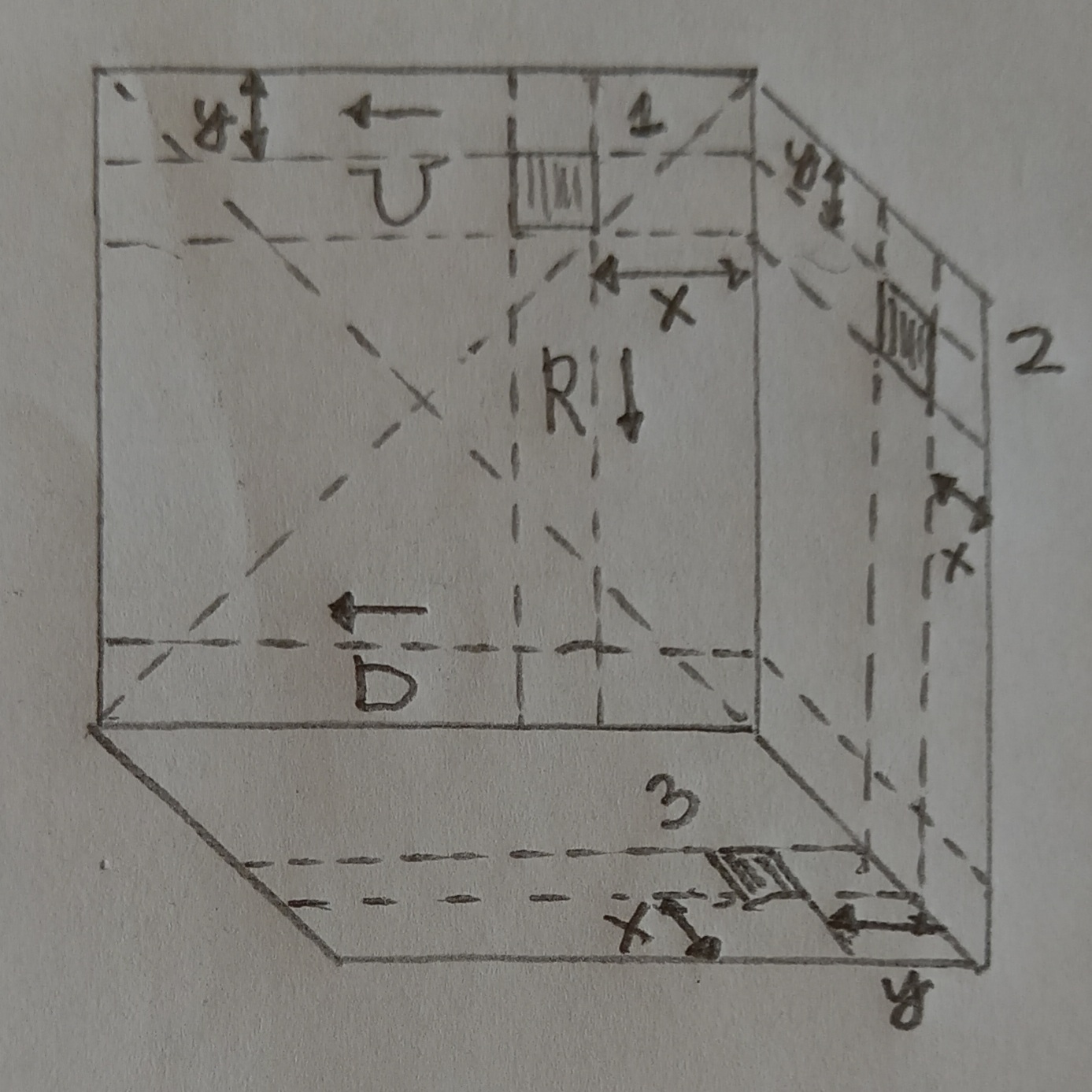}
\caption{}
\label{sec4sub1pic4}
\end{minipage}
\end{center}
\end{figure}

Тогда следующая комбинация
\[
\left[ \left[R,D \right],U \right] = \left[R,D \right] U \left[R,D \right]^{-1} U^{-1}
\]
в каждом из четырёх случаев осуществляет циклическую перестановку, при которой первый кубик переходит на место второго, второй -- на место третьего, а третий -- на место первого.

Вопрос о сведении трёх кубиков на заданные позиции в каждом из случаев также тривиален. Пользуясь принципом аккуратных перестановок, получаем, что любые три кубика можно аккуратно переставить.
\end{proof}

Выберем произвольный $t$-слой. Спроецируем его на $t$-мерное пространство и получим $t$-мерный Кубик Рубика. Для тех кубиков, которые на нём находятся, можно определить какому классу они принадлежат относительно нового Кубика Рубика. Например, угловые кубики в четырёхмерной головоломке принадлежат классу (4), а относительно трёхмерного внешнего слоя, их содержащего, классу (3). Определение классов относительно слоя сыграет роль в следующей лемме.

Укажем, как свести многомерный случай к трёхмерному.

\begin{lemma} \label{3layer}
Любые три кубика одного некаркасного класса можно расположить на некотором трёхмерном слое так, чтобы относительно него они все также принадлежали одному и тому же некаркасному классу.
\end{lemma}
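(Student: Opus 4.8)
План состоит в том, чтобы свести задачу к трёхмерному случаю: выбрать подходящий трёхмерный слой $\ell$ и указать, какие три из $n$ множеств в записи положения сделать \emph{свободными} (они зададут индуцированный трёхмерный кубик), а какие $n-3$ — \emph{закреплёнными} (они зададут сам слой). Напомним, что положения кубиков класса $\class$ имеют вид $\prd{E_V^m P_{j_1} \dots P_{j_{n-m}}}$, так что среди $n$ координатных множеств ровно $m$ суть $E_V$, а остальные $n-m$ отвечают характеристикам $j_1\le\dots\le j_{n-m}$. Индуцированный относительно $\ell$ класс определяется лишь тремя свободными множествами — числом $m'$ граничных среди них и попавшими в них характеристиками. Поэтому доказательство распадается на две части: выбор распределения множеств, дающего \emph{некаркасный} индуцированный класс, и собственно перемещение трёх кубиков на $\ell$.

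Сначала я бы разобрал выбор свободных множеств по величине $m$. При $m=n$ (угловые) свободными берём три множества $E_V$; индуцированный класс есть $(3)$ и всегда некаркасен. При $2\le m\le n-1$ свободными берём два $E_V$ и одно $P_{j_1}$, а закрепляем $m-2$ множеств $E_V$ (со значением $E_0$) и $P_{j_2},\dots,P_{j_{n-m}}$; индуцированный класс — рёберный $(2,j_1)$, который никогда не бывает каркасным. Наконец, при $m=1$ единственное множество $E_V$ обязано быть свободным: иначе все три свободных множества отвечают характеристикам, $m'=0$ и индуцированный кубик оказывается внутренним (без граней). Тогда свободными берём $E_V$ и две характеристики, среди которых хотя бы одна меньше $\md$ (такая есть, ибо класс некаркасен), а остальные $n-3$ характеристики закрепляем; индуцированный класс — $(1,j_a,j_b)$, где не оба числа $j_a,j_b$ равны $\md$, то есть некаркасный. Во всех случаях свободных множеств ровно три, закреплённых $n-3\ge 0$, а при $n=3$ слой $\ell$ есть весь куб и утверждение тривиально.

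Перемещение трёх кубиков на $\ell$ опирается на лемму о положении: поворот $\psi_{i,j}$ переставляет $i$-е и $j$-е множества (заменяя одно на противоположное), а двойной поворот $\psi^2_{i,j}$ заменяет два множества на противоположные, сохраняя их порядок, — ровно как в шагах~1 и~2 доказательства теоремы классификации. Этими средствами один кубик приводится к любому положению своего класса; в частности, нужная характеристика (или $E_V$) переводится в свободную координату, а значения закреплённых координат выставляются одинаковыми для всех трёх кубиков ($E_0$ для граничных и $E_{j_i}$ для характеристик). Тогда все три кубика попадают на один слой $\ell$ в один и тот же индуцированный класс; поскольку исходные кубики различны, а повороты инъективны, их положения на $\ell$ различны, и к ним применима лемма~\ref{3dimpermutation}. Заметим ещё, что все три кубика лежат в одном классе (а для особых классов — в одной компоненте связности), а повороты класс сохраняют, поэтому итоговые положения автоматически оказываются в нужной компоненте и никаких дополнительных условий не возникает.

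Главной трудностью я считаю \emph{одновременность} перемещения: устанавливая очередной кубик на $\ell$, нельзя сбивать уже установленные. Перемещать кубики я бы предложил по одному, выравнивая их координаты $n,n-1,\dots,4$ поочерёдно и используя лишь повороты $\psi_{c,f}$ в плоскости закреплённой координаты $c\in\{4,\dots,n\}$ и свободной $f\in\{1,2,3\}$: такой поворот не затрагивает прочие закреплённые координаты из $\{4,\dots,n\}$, так что ранее достигнутые совпадения сохраняются. Тонкое место — «столкновения», когда два кубика попадают в один вращаемый двумерный слой; их нужно разводить вспомогательным поворотом перед выравниванием. Поскольку кубиков всего три, а свободных направлений и доступных положений много, такие столкновения всегда обходятся; аккуратная проверка именно этого и составляет основную техническую работу доказательства.
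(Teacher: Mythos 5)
Ваша общая стратегия совпадает со стратегией статьи: трёхмерный слой выбирается по каноническому положению класса, разбиением координат на свободные и закреплённые, причём ваш выбор отличается лишь тем, что при $3\le m\le n-1$ вы получаете рёберный индуцированный класс $(2,j_1)$ вместо углового $(3)$, что несущественно. Однако в части перемещения есть два настоящих пробела. Первый: механизм обхода «столкновений», который вы сами называете основной технической работой, так и не предъявлен. В статье именно здесь содержится ключевой приём: перед каждым мешающим поворотом уже установленные кубики уводятся в безопасные положения \emph{поворотами двумерных слоёв, целиком лежащих в $L_P$} (закрепляются те же $n-3$ координаты плюс одна свободная); такие повороты не выбивают установленные кубики со слоя, а так как область действия мешающего поворота пересекает $L_P$ не более чем по одному двумерному слою из как минимум $k\ge 2$ параллельных, безопасное положение всегда найдётся. Ваш неконкретизированный «вспомогательный поворот» этой роли не играет: произвольный поворот может сам сбить уже выставленные закреплённые координаты.

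Второй пробел серьёзнее: утверждение, что итоговые положения «автоматически оказываются в нужной компоненте», неверно. Связанность относительно слоя --- строго более тонкое отношение, чем связанность в объемлющей головоломке, и сохранение классов большого Кубика ничего не говорит о классах относительно $\ell$. Контрпример: $n=4$, $k\ge 6$, класс $(1,j_1,j_1,j_2)$ с $j_1<j_2<\md$ --- в большой головоломке он неособый, то есть образует один класс эквивалентности; но при свободных координатах $E_V,\,P_{j_1},\,P_{j_2}$ индуцированный класс $(1,j_1,j_2)$ особый и распадается на два класса относительно слоя, и положения, отличающиеся транспозицией $E_{j_1}$ и $E_{j_2}$ в свободных координатах, связаны в большом Кубике, но не в пределах слоя. Ваша процедура выравнивания никак не контролирует, в какую из двух компонент попадёт каждый из трёх кубиков, поэтому лемма \ref{3dimpermutation} к ним может оказаться неприменимой. Статья обходит это по построению: каждый кубик приводится именно в $P$, а затем смещается только поворотами слоёв $L_P$, так что все три кубика по построению связаны с $P$ \emph{в пределах слоя}. Ваш план можно починить --- при $m=1$ брать свободными две совпадающие характеристики либо характеристику, равную $\md$, когда такие есть (тогда индуцированный класс неособый), а в оставшемся случае объемлющий класс сам особый, и аргумент чётности (все множества различны с точностью до противоположных, а закреплённые координаты у кубиков совпадают) переносится на слой, --- но в представленном виде это именно пробел, а не техническая деталь.
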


\begin{proof}
\textbf{1.} Случай, когда размерность равна трём, тривиален, так что полагаем $n \geq 4$.

Пусть классом кубиков является $\class$ (или особый $\spec$). Выберем удобный для нас трёхмерный слой в зависимости от канонического положения класса и назовём его также $\textbf{каноническим}$
\[
L_P=
\begin{cases}

\Omega_M^3 \times E_0^{m-3} \times \prod_{i=1}^{n-m} E_{j_i}, & P=E_0^m \times \prod_{i=1}^{n-m} E_{j_i}, \; m\ge 3, \\
\Omega_M^3 \times \prod_{i=2}^{n-m} E_{j_i}, & P=E_0^2 \times \prod_{i=1}^{n-m} E_{j_i}, \; m = 2, \\

\Omega_M^3 \times \prod_{i=3}^{n-m} E_{j_i}, & P=E_V \times \prod_{i=1}^{n-m} E_{j_i}, \; m = 1.

\end{cases}
\]

Обратим внимание, что такой слой содержит $P$, и при этом каноническое положение не является каркасным относительного $L_P$. 

Действительно, воспользуемся упрощённой записью канонического положения и выпишем только первые три множества
\[
\begin{cases}
E_0 \times E_0 \times E_0, & m \ge 3, \\
E_0 \times E_0 \times E_{j_1}, & m = 2, \\
E_V \times E_{j_1} \times E_{j_2}, & m = 1.
\end{cases}
\]
В упрощённой записи каркасного положения содержатся два множества $E_\md$
\[
\prd{E_V E_{\md}^2}.
\]
В первых двух случаях, очевидно, $P$ каркасным быть не может. В третьем это тоже верно. Действительно, для класса с $m=1$, который не является каркасным, минимальная характеристика $j_1 \neq \md$. Так что и тут двух множеств не найдётся.

Отметим, что из упрощённой записи $P$ видно, что при
\begin{itemize}
    \item $m=3$ $P$ является угловым положением относительно слоя,
    \item $m=2$ рёберным положением,
    \item $m=1$ некаркасным двугранным положением.
\end{itemize}
На рисунке \ref{sec4sub1pic5} изображена проекция канонического слоя на трёхмерное пространство и отмечены все три случая $P$.

\begin{figure}
    \centering
    \includegraphics[width=0.6\linewidth]{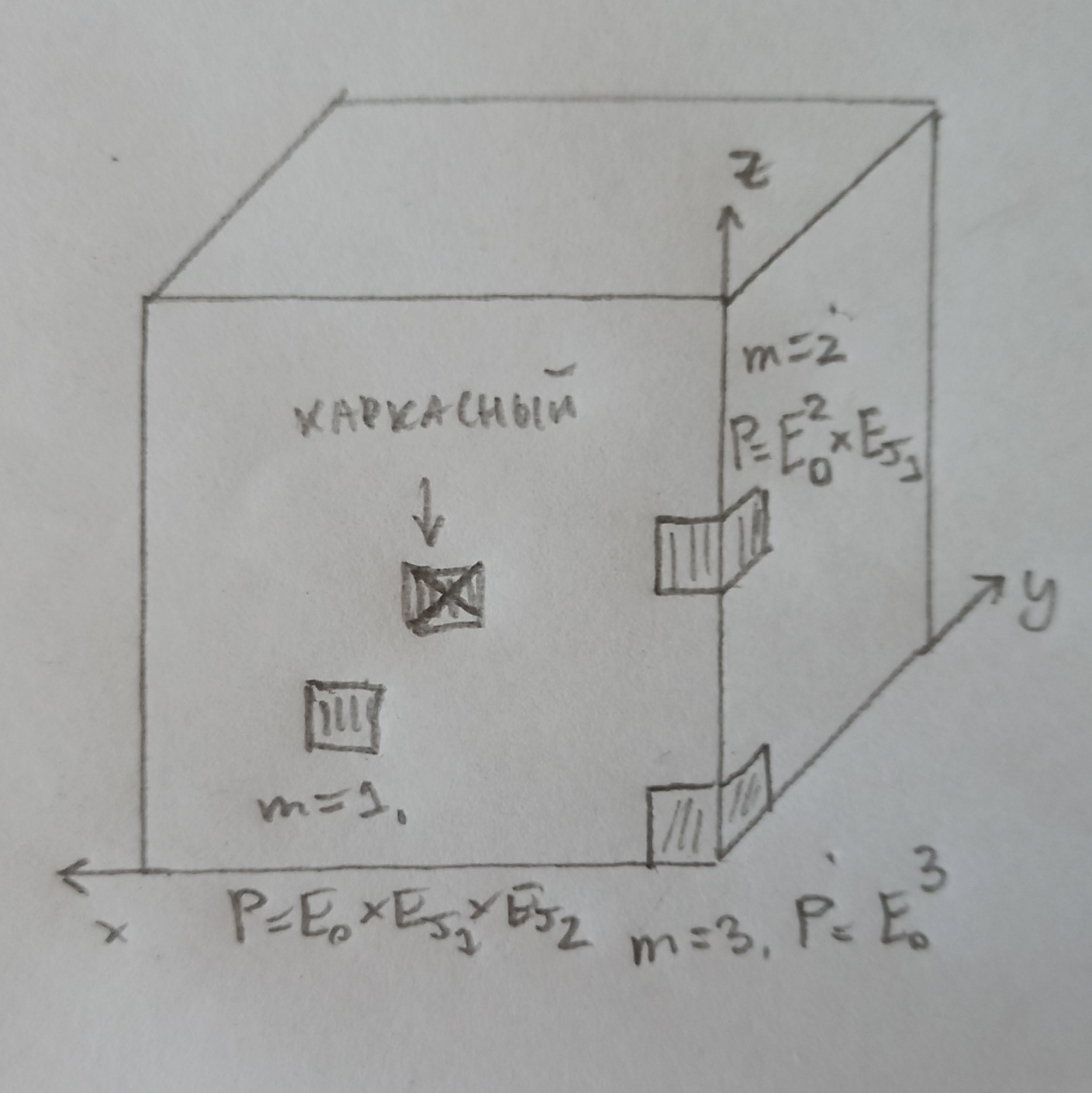}
    \caption{}
    \label{sec4sub1pic5}
\end{figure}

\textbf{2.} Теперь предъявим $\textbf{алгоритм}$ перемещения кубиков на выбранный слой. Занумеруем кубики от одного до трёх.

$\textbf{Шаг 1.}$ Так как каноническое положение связано с положениями кубиков класса, то переведём первый кубик в $P$.

$\textbf{Шаг 2.}$ Рассмотрим некоторую комбинацию поворотов двухмерных слоёв, переводящую второй кубик в $P$. Каждый раз, когда поворот $\psi_{i,j}$ из комбинации затронет положение первого кубика:
\begin{itemize}
    \item Сначала переместим первый кубик в $\textbf{безопасное}$ положение $L_P$, которое не входит в область действия поворота. Перемещение осуществляется $\textbf{в пределах слоя}$, то есть только поворотами слоёв $L_P$ (см. пример для углового $P$ на рис. \ref{sec4sub1pic6}).
    
    В области действия $\psi_{i,j}$ находится кубики одного двумерного слоя Кубика Рубика, в то время как в $L_P$ их как минимум $k \geq 2$. Поэтому всегда удастся выбрать безопасный слой и переместить кубик туда.
    \item Только после этого действия осуществляем $\psi_{i,j}$.
\end{itemize}
В конце данного шага получим, что два кубика располагаются на каноническом слое и связаны с $P$ в пределах $L_P$.

$\textbf{Шаг 3.}$ Аналогично поступаем и с третим кубиком. Единственным отличием будет то, что придётся два кубика одновременно перемещать в безопасные положения (см. пример для углового $P$ на рис. \ref{sec4sub1pic7}).

\begin{figure}[h]
\center
\includegraphics[width=0.6\textwidth]{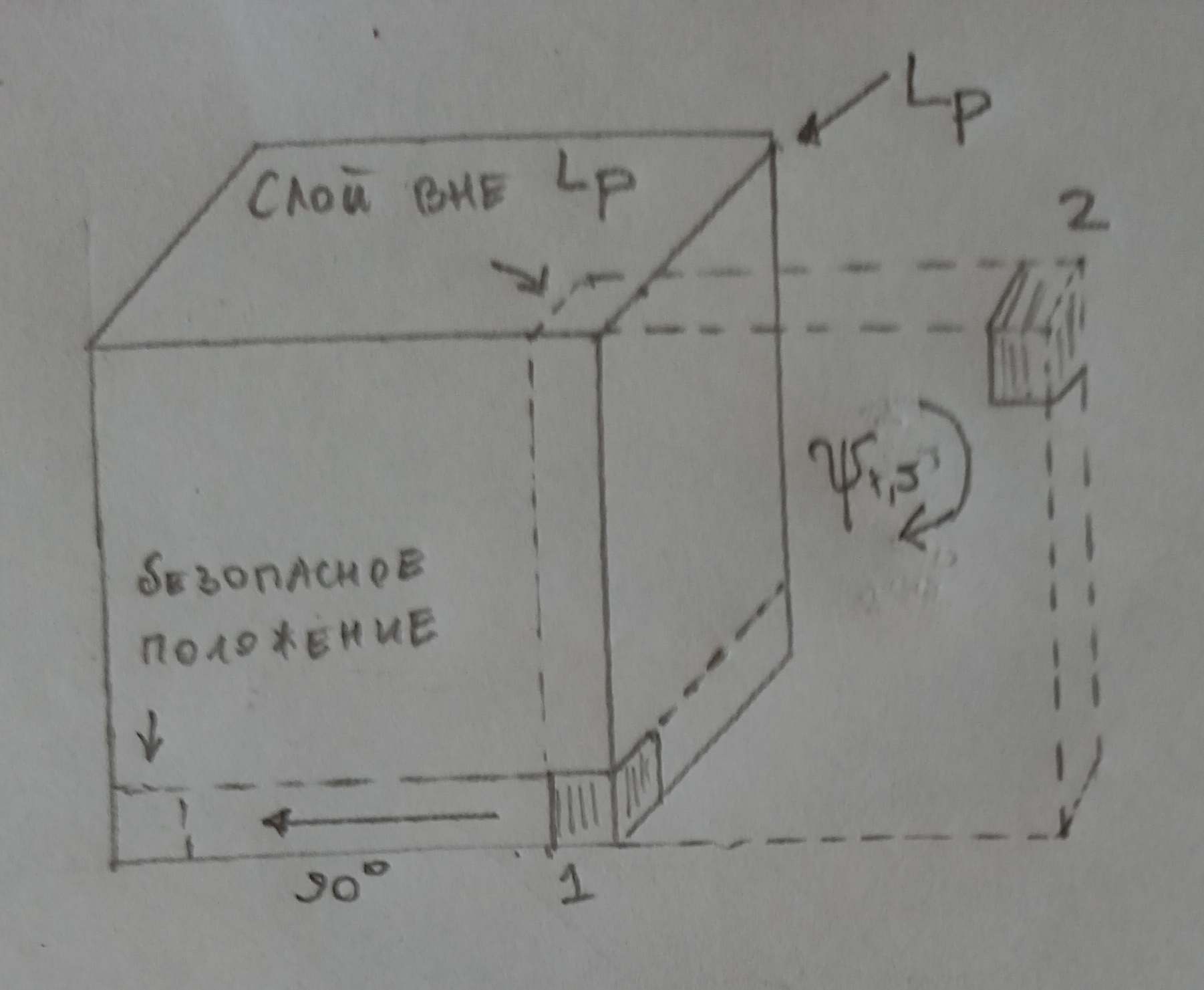}
\caption{}
\label{sec4sub1pic6}
\end{figure}

\begin{figure}[h]
\center
\includegraphics[width=0.6\textwidth]{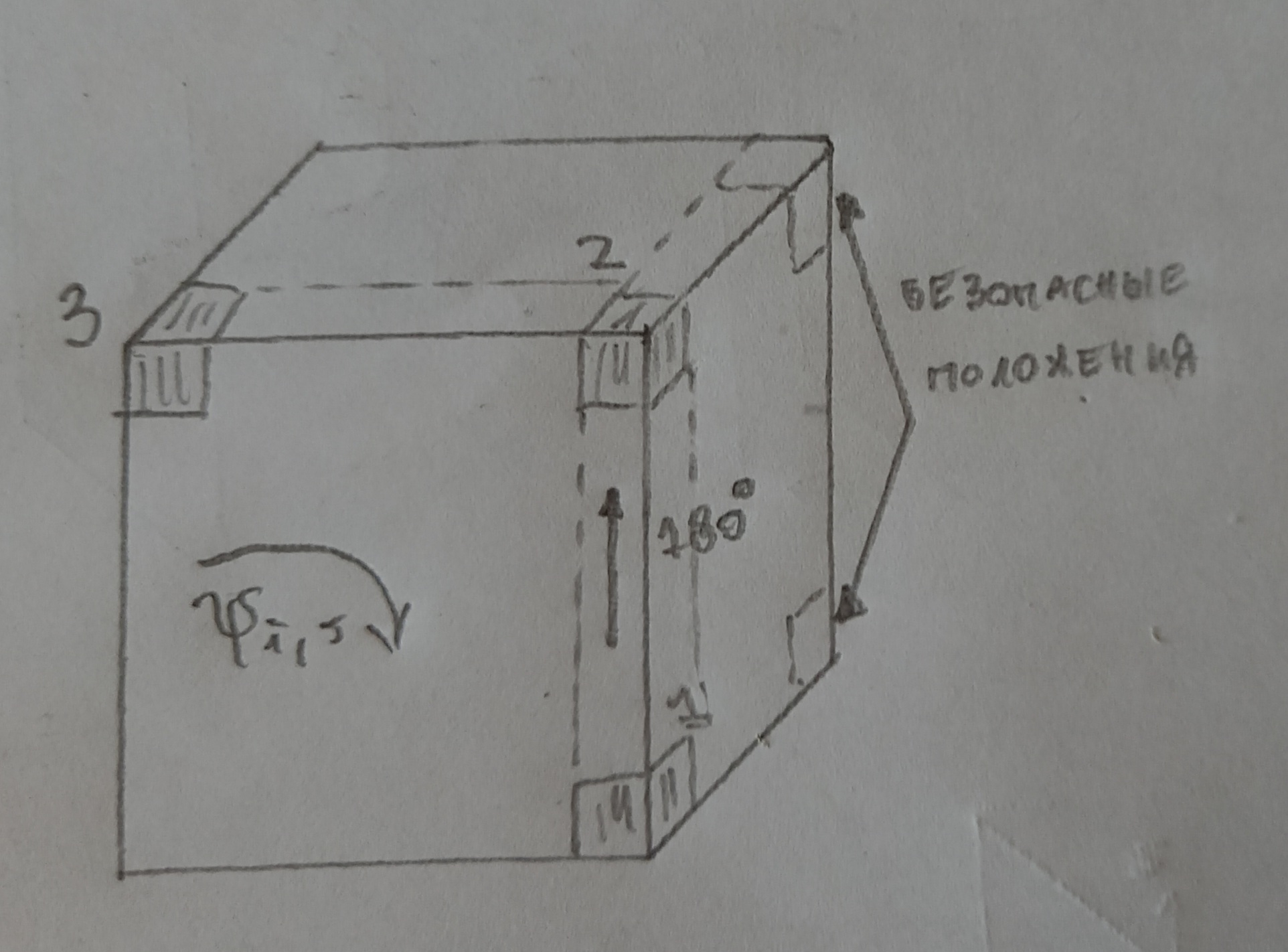}
\caption{}
\label{sec4sub1pic7}
\end{figure}

Выбранные кубики теперь находятся на одном трёхмерном слое и связаны с $P$ в пределах $L_P$ по построению алгоритма. Следовательно, они принадлежат одному классу эквивалентности относительно трёхмерного слоя. В виду некаркасности канонического положения все условия соблюдены и лемма доказана.
\end{proof}

Докажем теорему о перестановке класса.

\begin{proof}
Достаточно доказать, что можно аккуратно циклически переставить три кубика одного некаркасного класса.

Существует $p$, которая переведёт кубики на канонический слой таким образом, чтобы относительно него они принадлежали одному некаркасному классу (см. лемму \ref{3layer}). Существует $q$, аккуратно переставляющая их на каноническом слое (по лемме \ref{3dimpermutation}). Тогда по принципу аккуратной перестановки комбинация $w=pqp^{-1}$ будет искомой циклической перестановкой.
\end{proof}

\subsection{Ориентационные комбинации}

В этом разделе нас не будут интересовать кубики класса (1), поскольку они содержат только одну ориентационную грань.

\begin{lemma} \label{3dimorientation}
Выберем трёхмерный слой $L$ в головоломке, возможно совпадающий с ней целиком, и два его кубика из одного класса $\class$.

Существует аккуратная ориентационная комбинация, переставляющая
\begin{itemize}
    \item три ориентационные грани каждого из двух кубиков, если они угловые относительно $L$.
    \item две ориентационные грани каждого из двух кубиков, если они центрально-рёберные относительно $L$.
    \item две ориентационные грани каждого из двух кубиков, если
    
    1. они нецентрально-рёберные относительно $L$.
    
    2. существуют две совпадающие характеристики их класса $j_p=j_{p+1}$.
\end{itemize}
\end{lemma}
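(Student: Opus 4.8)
По существу все три случая имеют одну схему: по теореме о двух группах зависимая группа вращения нетривиальна, поэтому отдельный кубик переориентируем на месте (хотя и не аккуратно), после чего коммутатор с перестановкой положений распространяет переориентацию ровно на два кубика. Сначала я бы свёл задачу к одному удобному расположению пары. По \textbf{принципу аккуратной перестановки} достаточно построить искомую комбинацию $q$ лишь для одной удобной пары положений нужного типа: если $p$ переводит наши два кубика в эти положения (пусть даже задевая прочие элементы головоломки), то $w=pqp^{-1}$ аккуратно переориентирует их уже в исходных положениях. Действительно, аккуратность $q$ гарантирует, что на всех кубиках, кроме двух целевых, действие $p$ полностью отменяется обратной комбинацией $p^{-1}$; а поскольку сопряжение сохраняет циклический тип перестановки граней, то $w$ совершает на каждом целевом кубике цикл той же длины, что и $q$ (тройной -- для углов, двойной -- для рёбер).

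Отмечу ключевое различие случаев. Для угловых и центрально-рёберных относительно $L$ кубиков нужную переориентацию доставляют повороты двумерных слоёв, целиком лежащих в $L$: такие повороты фиксируют все $n-3$ координаты, задающие слой $L$, и потому заведомо не трогают ни одного кубика вне $L$. Значит, в этих двух случаях работаем внутри трёхмерного Кубика Рубика -- проекции слоя $L$, где по теореме о двух группах $\oG=A_3$ для углов и $\oG=S_2$ для центрально-рёберных. Здесь я бы предъявил явные коммутаторные комбинации (подобно тому, как в лемме \ref{3dimpermutation} выписывалась $[[R,D],U]$), действующие ровно на два кубика: для двух углов -- совершающую тройной цикл ориентационных граней одного и обратный ему цикл у другого; для двух центрально-рёберных -- переставляющую по две грани у каждого. Правильность действия на гранях проверяется напрямую при помощи \textbf{леммы об ориентации} (через переходы $\alpha_l$ и соотношения $\phi_l^{'}=\phi_l\alpha_l$, $\alpha_4\alpha_3\alpha_2\alpha_1=e$), а аккуратность -- выбором сомножителей коммутатора с областями действия, пересекающимися лишь по целевым кубикам.

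Основная работа -- третий случай. В чистом трёхмерье одиночный нецентрально-рёберный относительно $L$ кубик не переориентируется вовсе (его зависимая группа относительно $L$ тривиальна), так что поворотов внутри $L$ недостаточно. Решающим оказывается условие $j_p=j_{p+1}$: по теореме о двух группах из него следует $\oG\class=S_m$, а транспозицию двух нужных ориентационных граней (тех двух из $m$, что проявляются в слое $L$) доставляет в точности комбинация
\[
\psi_{m+p+1,m+p}\circ\psi^2_{i,m+p}\circ\psi_{i,j},
\]
использующая четвёртое измерение. Обозначу её за $s$: это переориентация одного кубика на месте, задевающая некоторую область вне $L$. Далее я бы взял комбинацию $t$, переставляющую положения двух целевых кубиков, и рассмотрел коммутатор $w=tst^{-1}s^{-1}$. Поскольку $s$ -- инволюция на гранях, прямая проверка показывает, что $w$ переставляет по две ориентационные грани у каждого из двух кубиков, оставляя их на исходных местах.

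Главным препятствием ожидаю именно \textbf{аккуратность} в третьем случае. Так как $s$ выходит за пределы $L$, его область действия велика и содержит кубики вне $L$; поэтому комбинацию $t$ надо подобрать так, чтобы в коммутаторе $tst^{-1}s^{-1}$ все побочные возмущения (как внутри $L$, так и вне него) взаимно уничтожились, а нетривиальным осталось действие ровно на двух целевых кубиках. После проверки этого факта остаётся сослаться на сведение из первого абзаца, и лемма будет доказана.
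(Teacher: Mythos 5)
Ваша общая схема --- коммутатор <<грязной>> переориентации $s$ с перестановочной комбинацией и последующее сопряжение $w=pqp^{-1}$ --- совпадает по духу с доказательством статьи, но в ключевом месте остаётся пробел, который вы сами называете, но не закрываете. В третьем случае вы берёте коммутатор $tst^{-1}s^{-1}$, где $t$ переставляет положения \emph{двух} целевых кубиков. Такая $t$ --- транспозиция, то есть нечётная перестановка класса; теорема о чётной перестановке её не даёт, а для ряда классов аккуратная транспозиция вообще запрещена инвариантами чётности. Значит, и $s$, и $t$ приходится брать <<грязными>>, и тогда их побочные возмущения в коммутаторе ничем не обязаны сокращаться: $t^{-1}$ может затронуть область, испорченную $s$, после чего $s$ восстановит уже не то, что портила. Подбор $t$, при котором <<всё сократится>>, --- это и есть содержание леммы, и он у вас не выполнен. Аналогично в случаях 1--2 вы пишете, что <<предъявили бы явные коммутаторные комбинации, действующие ровно на два кубика>>, но не предъявляете их; существование таких комбинаций --- это и есть доказываемое утверждение.

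Статья закрывает именно эту дыру одним приёмом, которого в вашем тексте нет: вводится \emph{третий} кубик того же класса. Выбираются положения $A$, $B$, $C$ и комбинация $s$, от которой требуется лишь сохранять положение $A$, менять ориентацию кубика в $A$ и \emph{не трогать} кубики в $B$ и $C$ (в третьем случае это легко устроить: внешний по отношению к $L$ слой пересекает $L$ лишь по одномерному слою, содержащему $A$, так что безопасные $B$ и $C$ найдутся). Затем берётся $q$ --- \emph{аккуратный} цикл $A\to B\to C\to A$, существующий по лемме \ref{3dimpermutation}, поскольку тройной цикл чётен. В коммутаторе $[s,q]$ сокращение происходит автоматически: $q$ не трогает ничего вне $\{A,B,C\}$, поэтому весь побочный эффект $s$ уничтожается обратной комбинацией $s^{-1}$, и $[s,q]$ оказывается аккуратной ориентационной комбинацией ровно для двух кубиков класса. Этот приём единообразно обслуживает все три случая --- комбинации $s$ берутся прямо из доказательства теоремы о двух группах (включая выписанную вами четырёхмерную $\psi_{m+p+1,m+p}\circ\psi^2_{i,m+p}\circ\psi_{i,j}$) --- и не требует ни явных двухкубиковых комбинаций, ни несуществующей аккуратной транспозиции.
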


\begin{proof}
Для доказательства леммы понадобится дополнительный третий кубик из $L$ того же класса $\class$. Найдём комбинацию $w$, осуществляющую перестановку граней первого и второго.

Предположим, что удалось найти положения $A,B$ и $C$ из $L$, для которых существует комбинация $s$, удовлетворяющая условиям
\begin{itemize}
    \item Она сохраняет положение $A$.
    \item Она меняет ориентацию кубика из $A$ и не влияет на состояния кубиков из $B$ и $C$.
\end{itemize}

Отметим, что заведомо существует комбинация $p$, перемещающая первый, второй и третий кубики в $A,B$ и $C$. Также по лемме \ref{3dimpermutation} существует $q$, циклически переставляющая кубики из $A$ в $B$, из $B$ в $C$, из $C$ в $A$.

Легко проверить, что $[s,q]$ переставляет грани первого и второго кубиков в положениях $A$ и $B$, не влияя на состояния других. То есть, она является аккуратной ориентационной комбинацией. Тогда по принципу аккуратной перестановки $w=p[s,q]p^{-1}$ будет искомой.

Осталось лишь указать нужные положения и $s$ для каждого класса кубиков. Для этого обратимся к доказательству теоремы о двух группах.

\textbf{1. Пусть класс угловой}. Комбинация
\[
s=\psi_{s,i} \; \circ \; \psi_{i,j}
\]
циклически переставляет грани кубика канонического положения. При этом она задействует вращения двух пересекающихся слоёв. Возьмём положения $A,B$ и $C$ как на рисунке \ref{sec4sub2pic1} и комбинацию вида $s$, но уже применённую к положению $A$ слоя $L$. Тогда все условия на $s$ будут выполнены.

\begin{figure}[h]
    \centering
    \includegraphics[width=0.6\linewidth]{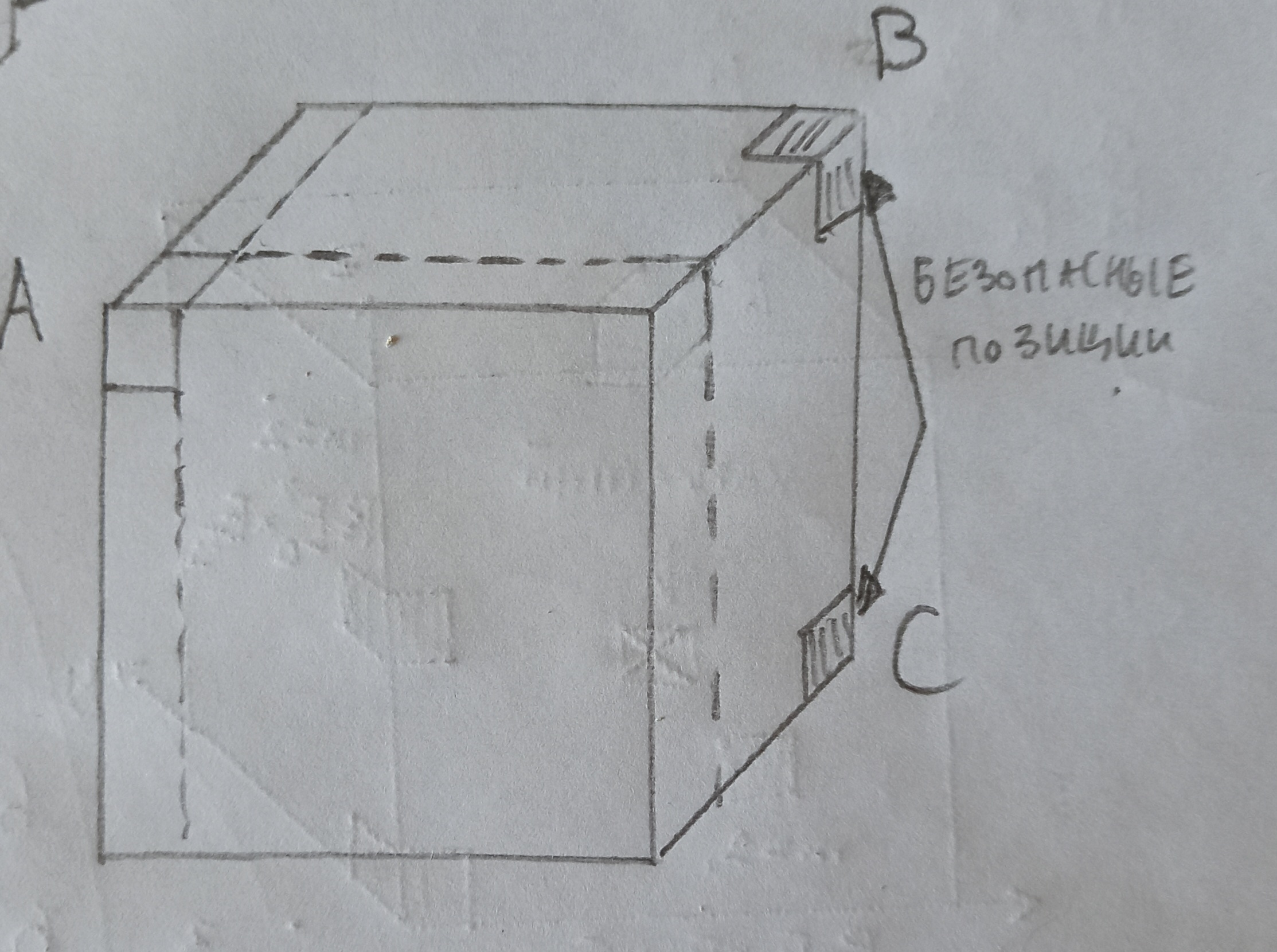}
    \caption{}
    \label{sec4sub2pic1}
\end{figure}

\textbf{2. Пусть класс центрально-рёберный}. Комбинация
\[
s=\psi^2_{i,m} \circ \psi_{i,j}
\]
совершает транспозицию граней кубика в положении $P$. Дальнешие рассуждения, как и в случае угловых. Соответствующий пример положений $A,B$ и $C$ приведён на рисунке \ref{sec4sub2pic2}.

\begin{figure}[h]
    \centering
    \includegraphics[width=0.6\linewidth]{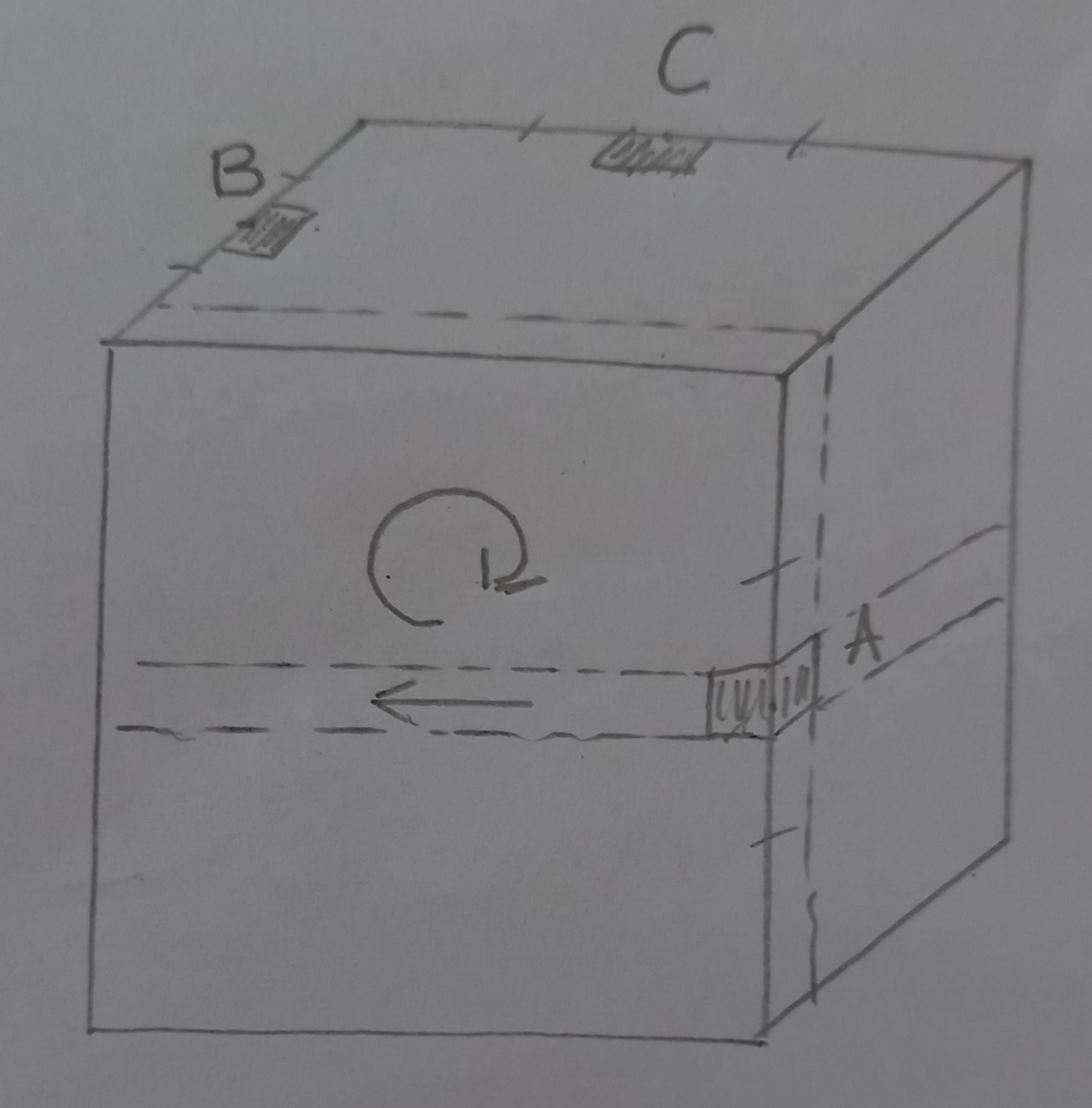}
    \caption{}
    \label{sec4sub2pic2}
\end{figure}

\textbf{3. Пусть класс нецентрально-рёберный}. Комбинация
\[
s=\psi_{m+p+1,m+p} \circ \psi^2_{i,m+p} \circ \psi_{i,j}
\]
совершает транспозицию граней кубика $P$ в головоломке размерности большей трёх, так как существуют две совпадающие характеристики. В размерности три такое невозможно из-за того, что зависимая группа состоит из тождественной перестановки. Применим подобную комбинацию $s$ к положению $A$ слоя $L$ (рис. \ref{sec4sub2pic3}). При таких поворотах задействуются вращения двух пересекающихся слоёв, которые переводят кубик из $A$ в $A^{'}$, а затем слой вне $L$ переводит его обратно в $A$. Этот внешний слой пересекается с $L$ только по 'одномерному слою', содержащему $A$. Потому легко выбрать безопасные позиции $B,C$.

\begin{figure}[h]
    \centering
    \includegraphics[width=0.6\linewidth]{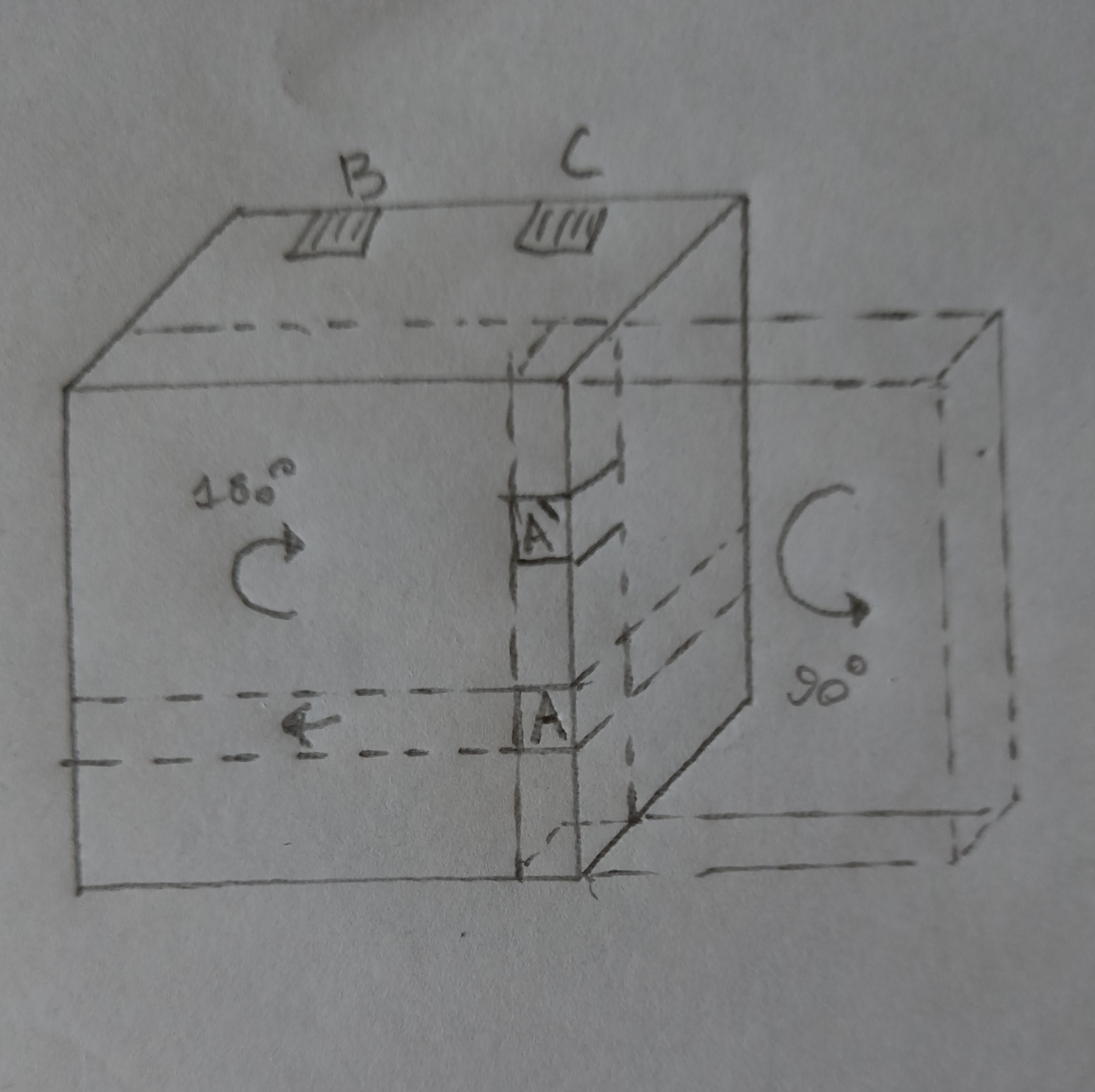}
    \caption{}
    \label{sec4sub2pic3}
\end{figure}

\end{proof}

\begin{theorem} \label{twocubesorient}
Для любой пары кубиков из одного класса существует аккуратная ориентационная комбинация. При этом у одного из кубиков ориентацию можно изменить любым возможным образом. То есть, для произвольной ориентационной перестановки $z\in \oG$ существует комбинация поворотов, после которой для одного из кубиков
\[
\phi^{'}=z \cdot \phi.
\]
\end{theorem}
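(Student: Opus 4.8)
\emph{План.} Удобно переформулировать цель в терминах переходов. Зафиксируем нумерацию граней, при которой все ориентационные перестановки лежат в $\oG$ (это доставляет доказанная выше лемма о согласованной нумерации). Тогда $\phi\in\oG$, и требуемое равенство $\phi^{'}=z\phi$ равносильно реализации перехода $\alpha=\phi^{-1}z\phi$ у выбранного кубика, поскольку по лемме об ориентации переход действует умножением справа: $\phi^{'}=\phi\alpha$. Так как сопряжение элементом $\phi\in\oG$ есть автоморфизм $\oG$, величина $\alpha$ пробегает всю группу $\oG$ одновременно с $z$. Значит, достаточно доказать, что множество переходов, которые можно сообщить \emph{одному} кубику пары аккуратной ориентационной комбинацией, совпадает со всей $\oG$.

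По теореме о двух группах $\oG$ есть либо $A_m$, либо $S_m$. Напомню, что $A_m$ порождается тройными циклами, а добавление к $A_m$ любой транспозиции даёт $S_m$. Поэтому я бы реализовал эти порождающие по отдельности, опираясь на леммы \ref{3layer} и \ref{3dimorientation} (вспомогательный третий кубик того же, заведомо многоэлементного, класса всегда найдётся для коммутаторной конструкции). Для тройного цикла граней, отвечающих трём из первых $m$ координатных направлений (в каноническом положении все они суть $E_0$ и потому равноправны), построю трёхмерный слой, освобождающий именно эти три направления; относительно него кубик угловой, и первый пункт леммы \ref{3dimorientation} даёт аккуратную комбинацию, циклически переставляющую три его ориентационные грани. Перебирая тройки направлений, получаю все тройные циклы, то есть всю $A_m$. При $m=2$ эта часть пуста, ведь $A_2=\{e\}$, и останется лишь транспозиция.

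Транспозиция требуется ровно в случае $\oG=S_m$, то есть (снова по теореме о двух группах) при $m<n$ и $j_{n-m}=\md$ либо при наличии совпадающих характеристик $j_p=j_{p+1}<\md$. Но это в точности гипотезы \emph{рёберных} пунктов леммы \ref{3dimorientation}. В первом случае освобожу два из первых $m$ направлений и направление характеристики $E_\md$; относительно такого слоя кубик центрально-рёберный, и пункт 2 леммы \ref{3dimorientation} транспонирует соответствующие две грани. Во втором случае освобожу два из первых $m$ направлений и одно из направлений совпадающих характеристик; кубик окажется нецентрально-рёберным с нужным условием $j_p=j_{p+1}$, и пункт 3 доставит транспозицию. Таким образом, транспозиция доступна именно тогда, когда она нужна.

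Остаётся собрать ответ: композиция аккуратных ориентационных комбинаций одной и той же пары снова аккуратна для этой пары, причём переходы выбранного кубика при этом перемножаются, тогда как переход второго остаётся некоторым (не важно, каким) элементом $\oG$. Разлагая искомый $\alpha\in\oG$ в произведение найденных порождающих, последовательным применением комбинаций получаю нужный переход. Главным препятствием я считаю не наличие порождающих, а \emph{свободу выбора} освобождаемой тройки (или пары) направлений: нужно аккуратно обобщить построение канонического слоя из леммы \ref{3layer} так, чтобы можно было освободить любой допустимый набор направлений, сохранив некаркасность класса относительно слоя, и проверить, что соответствие между доступностью транспозиции и равенством $\oG=S_m$ не даёт сбоев на граничных значениях, прежде всего при $m=2$ и $m=3$.
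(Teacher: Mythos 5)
Ваше предложение верно и по существу повторяет путь самой статьи: свести $z$ к порождающим группы $\oG$ (тройные циклы и транспозиции), перевести пару кубиков на подходящий трёхмерный слой, относительно которого они становятся угловыми, центрально-рёберными или нецентрально-рёберными --- ровно три случая леммы \ref{3dimorientation}, --- и затем скомпоновать полученные аккуратные комбинации. Отличия лишь косметические: ваша переформулировка через сопряжённый переход $\alpha=\phi^{-1}z\phi$ (статья этот вопрос о левом/правом умножении опускает) и порождение $S_m$ как $A_m$ плюс одна транспозиция вместо использования всех транспозиций, как это сделано в статье.
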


\begin{proof}
Пусть их класс -- $\class$. Для определённости положим, что таблица ориентационных граней выглядит как
\begin{center}
\begin{tabular}{c c c c c c c c c}
 1 & $E_V$ & $V$ & $V$ & \dots & $V$ & $P_{j_1}$ & \dots & $P_{j_{n-m}}$ \\
 2 & $V$ & $E_V$ & $V$ & \dots & $V$ & $P_{j_1}$ & \dots & $P_{j_{n-m}}$ \\
 3 & $V$ & $V$ & $E_V$ & \dots & $V$ & $P_{j_1}$ & \dots & $P_{j_{n-m}}$ \\
 \dots & \dots & \dots & \dots & \dots & \dots & \dots & \dots & \dots \\ 
 $m$ & $V$ & $V$ & $V$ & \dots & $E_V$ & $P_{j_1}$ & \dots & $P_{j_{n-m}}$.
\end{tabular}
\end{center}
Необходимо доказать теорему в случае, когда $z$ -- перестановка двух граней (если $\oG=S_m$) или трёх (если $\oG=A_m$). В первом случае выберем первые две грани, во втором -- первые три. В случае выбора каких-либо других граней доказательство будем аналогичным.

Для начала в каждом из трёх случаев (в зависимости от $m$ и характеристик) укажем, на какой трёхмерный слой $L$ мы переведём оба кубика.

\begin{itemize}
    \item Если $m=n$ или $j_1 < \dots < j_{n-m} < \mdd$, то $\oG=A_m$ и
    \[
    L=\Omega_M^3 \times E_0^{m-3} \times \prod_{i=1}^{n-m} E_{j_i}.
    \]
    Относительно него кубики будут угловыми.
    \item Если $m<n$ и $j_{n-m}=\frac{k-1}{2}$, то $\oG=S_m$ и
    \[
    L=\Omega_M^2 \times E_0^{m-2} \times \prod_{i=1}^{n-m} E_{j_i} \times \Omega_M.
    \]
    Относительно него кубики центрально-рёберные.
    \item Если $m<n$ и $\exists l: j_l=j_{l+1}<\mdd$, то $\oG=S_m$ и
    \[
    L=\Omega_M^2 \times \prod_{i=1}^{l-1} E_{j_i} \times \Omega_M \times \prod_{i=l+1}^{n-m} E_{j_i}
    \]
    Относительно него кубики нецентрально-симметричные с совпадающими двумя характеристиками.
\end{itemize}
Легко проверить, что все выбранные грани содержатся в соответствующем слое $L$. Далее используя лемму \ref{3dimorientation}, доказываем утверждение теоремы.
\end{proof}

\begin{remark}
Данная теорема позволяет менять ориентацию класса 'почти' произвольным образом. Например, занумеруем кубики класса от 1 до $N$, а затем будем менять ориентации 1-го и 2-го, 1-го и 3-го,$\dots$,1-го и $N$-го. Тогда мы можем добиться любых ориентаций у кубиков 2,3,$\dots , N$ по теореме \ref{twocubesorient}. Останется неориентированным лишь первый их них. Возникает вопрос: какие его ориентации осуществимы, если все остальные кубики класса должны остаться в тех же состояниях?

Теорема \ref{onecubeorient} даёт ответ на вопрос, какие ориентации заведомо осуществимы, но не раскрывает детали об остальных. В дальнейшем станет ясно, что никакие другие осуществить и не удастся из-за сохранения ориентационного инварианта.
\end{remark}

\begin{deff}
\textbf{Коммутатором} комбинациq $a$ и $b$ называется следующая комбинация
\[
[a,b]=a b a^{-1} b^{-1}.
\]

\textbf{Коммутантом} группы $G$ является группа, порождённая всеми коммутаторами $G$
\[
Q=[G,G]= \: <[g_1,g_2] \; | \; g_1, g_2 \in G>.
\]
\end{deff}

\begin{theorem} \label{onecubeorient} 
Для любого кубика с зависимой группой $\oG$ выберем произвольную ориентационную подстановку $z$ из коммутанта $\left[\oG, \oG \right]$. Тогда для кубика найдётся аккуратная комбинация, меняющая его ориентацию следующим образом:
\[
\phi^{'}=z \cdot \phi.
\]
\end{theorem}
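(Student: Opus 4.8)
План состоит в том, чтобы свести утверждение об одном кубике к теореме \ref{twocubesorient} о двух кубиках при помощи коммутаторной конструкции, используя то, что все участвующие комбинации аккуратны и потому корректно компонуются (как и всюду в этом разделе, класс считаем некаркасным). Прежде всего заметим, что утверждение содержательно лишь когда $[\oG,\oG]\neq\{e\}$: при тривиальном коммутанте подойдёт пустая комбинация. Во всех же содержательных случаях класс кубика содержит не менее трёх различных кубиков; зафиксируем три из них, обозначив $A$, $B$, $C$, и будем считать, что данный кубик — это $A$.

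По теореме \ref{twocubesorient} для пары $(A,B)$ и произвольного $a\in\oG$ существует аккуратная ориентационная комбинация $g$, меняющая ориентацию $A$ на $a$, ориентацию $B$ на некоторое $\beta\in\oG$ и не затрагивающая состояний остальных элементов головоломки. Аналогично для пары $(A,C)$ и произвольного $b\in\oG$ найдётся аккуратная ориентационная $h$, меняющая $A$ на $b$ и $C$ на некоторое $\gamma$. Так как обратная к аккуратной ориентационной комбинации снова аккуратна и действует на каждой задействованной грани обратной перестановкой, $g^{-1}$ меняет $A$ на $a^{-1}$ и $B$ на $\beta^{-1}$, а $h^{-1}$ меняет $A$ на $b^{-1}$ и $C$ на $\gamma^{-1}$.

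Рассмотрим $w=[g,h]=ghg^{-1}h^{-1}$. Поскольку все четыре комбинации ориентационные, положения кубиков не меняются, позиционной интерференции нет, и ориентационные эффекты просто накапливаются левым умножением (напомним, $\phi'=z\phi$). Кубик $B$ затрагивается лишь комбинациями $g$ и $g^{-1}$, получая подряд $\beta$ и $\beta^{-1}$, а потому возвращается в исходное состояние; так же $C$ затрагивается лишь $h$ и $h^{-1}$ и возвращается на место. На кубике $A$ изменения $a,b,a^{-1},b^{-1}$ накапливаются в $b^{-1}a^{-1}ba=[b^{-1},a^{-1}]$. Значит, $w$ — аккуратная ориентационная комбинация, меняющая ориентацию одного лишь $A$ на коммутатор $[b^{-1},a^{-1}]$ и оставляющая всё прочее неизменным. Когда $a,b$ пробегают $\oG$, элементы $[b^{-1},a^{-1}]$ пробегают все коммутаторы группы $\oG$, то есть все порождающие подгруппы $[\oG,\oG]$ реализуются на $A$. Наконец, произвольный $z\in[\oG,\oG]$ есть произведение коммутаторов $z=c_1\cdots c_r$; соединяя соответствующие комбинации в нужном порядке, получаем аккуратную комбинацию, реализующую $z$ на $A$, ибо композиция аккуратных ориентационных комбинаций для $A$ снова затрагивает только $A$ и перемножает его ориентационное изменение. Это и даёт требуемое $\phi'=z\cdot\phi$.

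Главным препятствием я ожидаю не идею, а аккуратность учёта: существенно, что $B\neq C$ — именно поэтому нужны три кубика, а не два, так как при единственном партнёре этот «другой» кубик вместо возвращения на место набрал бы нетривиальный коммутатор наведённых на нём изменений. Помимо этого, требуется убедиться, что всякий класс с нетривиальным $[\oG,\oG]$ действительно содержит три различных кубика одного класса, что вытекает из прямого подсчёта числа положений.
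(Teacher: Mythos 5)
Ваше доказательство верно и по существу совпадает с доказательством в статье: там также берутся два вспомогательных кубика того же класса, дважды применяется теорема \ref{twocubesorient}, и коммутатор двух полученных комбинаций ($[y^{-1},x^{-1}]$ в статье, $[g,h]$ у вас) гасит эффекты на вспомогательных кубиках и реализует коммутатор $[z_1,z_2]$ на целевом, после чего общий случай сводится к произведению коммутаторов. Единственное несущественное отличие --- статья сразу подбирает $x,y$ под заданные $z_1,z_2$, тогда как вы отмечаете сюръективность отображения $(a,b)\mapsto[b^{-1},a^{-1}]$ на множество коммутаторов; ваши дополнительные оговорки (тривиальный коммутант, существование трёх кубиков, $B\neq C$) лишь явно проговаривают то, что в статье подразумевается.
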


\begin{proof}
Так как произвольная подстановка из коммутанта является произведением коммутаторов, то достаточно доказать только для случая одного коммутатора.

Пусть $z=[z_1,z_2]$ для некоторых $z_1$ и $z_2$ из $\oG$. Выберем ещё два любых кубика того же класса, что и данный, и пронумеруем их. Исходный кубик будет первым. По теореме \ref{twocubesorient} существует

\begin{itemize}
    \item комбинация $x$, меняющая ориентации первого и второго кубика, при этом:
    \[
    \phi_1^{'}=z_1 \cdot \phi_1
    \]
    \item комбинация $y$, меняющая ориентации первого и третьего кубика, при этом:
    \[
    \phi_1^{'}=z_2 \cdot \phi_1
    \]
\end{itemize}

Тогда $\left[y^{-1},x^{-1}\right]=y^{-1}x^{-1}yx$ меняет ориентацию только первого кубика:
\[
\phi^{'}=z_1 \cdot z_2 \cdot z_1^{-1} \cdot z_2^{-1} \cdot \phi=[z_1,z_2] \cdot \phi = z \cdot \phi.
\]
\end{proof}

\section{Инварианты}

\subsection{Каркасные инварианты}

Каркасные кубики - кубики класса $(1,\md,\dots,\md)$. Их положения записываются как:
\[
\prd{E_V E^{n-1}_{\md}}.
\]
Несложно убедиться, что всего таких кубиков $2n$. Каждый имеет одну ориентационную грань и является уникальным, что следует из теоремы о кластерах.

\begin{theorem}
Выберем произвольное состояние каркаса $S$ и определим для него две величины.

1. Разобьём все кубики на $n$ пар с положениями:
\[
\prd{E_0 E^{n-1}_{\md}} \text{ и } \prd{E_{k-1} E^{n-1}_{\md}}.
\]

Состав пар при поворотах не меняется (в паре всегда одни и те же элементы). Таким образом, если осуществить разбиение $2n$ уникальных кубиков на $n$ пар, то это разбиение инвариантно относительно вращения Кубика Рубика.

Пронумеруем все такие разбиения на пары и определим \textbf{первый каркасный инвариант} $B_1$, как номер разбиения для $S$
\[
B_1 \in \Z_N, \; N=\frac{(2n)!}{2^n \cdot n!}.
\]

2. Выберем фиксированное состояние каркаса $S^{'}$. Обозначим за $PS$ и $PS^{'}$ состояния множеств пар, которые соответствуют $S$ и $S^{'}$.

Пусть $S$ получается из $S^{'}$ перестановкой $\sigma_c$ его кубиков, а $PS$ из $PS^{'}$ перестановкой $\sigma_p$ его пар. Тогда величина
\[
B_2=sgn (\sigma_c) \cdot sgn (\sigma_p) \in \Z_2
\]
является \textbf{вторым каркасным инвариантом}.
\end{theorem}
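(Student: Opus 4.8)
The plan is to analyze both invariants through the action of a single layer rotation on the $2n$ frame cubes, whose positions are $\prd{E_V E^{n-1}_{\md}}$. First I would pin down exactly how a rotation $\psi_{i,j}$ moves these cubes. A frame cube has all coordinates equal to $E_\md$ except one, which is $E_0$ or $E_{k-1}$; call that one its extremal axis. If the extremal axis is neither $i$ nor $j$, then in the simplified $(i,j)$-form the cube reads $E_\md \times E_\md$, so by the Лемма о положении it is fixed. If the extremal axis is $i$ or $j$, the simplified form is $E_0\times E_\md$, $E_{k-1}\times E_\md$, $E_\md\times E_0$ or $E_\md\times E_{k-1}$, and since $\overline{E_\md}=E_\md$ the four-cycle of the lemma runs through exactly the four frame positions on axes $i$ and $j$, which I label $P_1,\dots,P_4$. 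I would then note that these four positions share the same coordinates outside $i,j$ (all equal to $E_\md$), so every $\psi_{i,j}$ either fixes all frame cubes or moves precisely these four as one $4$-cycle $(P_1\,P_2\,P_3\,P_4)$, independently of the dimension of the rotated layer.

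Part 1 follows quickly. The $4$-cycle carries the opposite pair on axis $i$ to the opposite pair on axis $j$ and back, hence it maps pairs to pairs: the set-partition of the $2n$ (pairwise distinct, by the Теорема о кластерах) frame cubes into $n$ opposite pairs is unchanged by every rotation. Since the number of partitions of $2n$ elements into $n$ unordered pairs is $N=\frac{(2n)!}{2^n\cdot n!}$, the index $B_1\in\Z_N$ of this partition is constant along every orbit, i.e. an invariant.

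For part 2 I would fix the reference $S'$ and work within its orbit, where part 1 guarantees that $S$ and $S'$ share the same pairing. Then $\sigma_c\in S_{2n}$ maps the cube placement of $S'$ to that of $S$, and because it respects the common pairing it is a signed permutation: it permutes the $n$ axes by some $\pi\in S_n$ and flips the two ends of some subset of axes, with $\sigma_p=\pi$. I would record the sign computation: a pure axis permutation, acting diagonally on the $E_0$- and $E_{k-1}$-ends, doubles each cycle and so has sign $+1$ as an element of $S_{2n}$, while each end-flip is a transposition; hence $sgn(\sigma_c)=(-1)^{f}$ with $f$ the number of flipped axes, and $sgn(\sigma_p)=sgn(\pi)$. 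The decisive observation is that a frame-moving rotation $\psi_{i,j}$ multiplies $\sigma_c$ by the $4$-cycle above (an odd permutation, sign $-1$) and simultaneously multiplies $\sigma_p$ by the transposition $(i\,j)$ (again sign $-1$); the two sign changes cancel, so $B_2=sgn(\sigma_c)\cdot sgn(\sigma_p)$ is preserved, while frame-fixing rotations change neither factor.

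The main obstacle is exactly this simultaneous parity bookkeeping in part 2: one must be certain that the rotation acts as a genuine $4$-cycle (odd) on the frame cubes and that it induces precisely a transposition (odd) on the $n$ axes, so that the product of the two signs is invariant even though each factor flips. Confirming that $\sigma_c$ is always a signed permutation — which rests on the pairing-invariance of part 1 and is what legitimizes defining $\sigma_p$ at all — is the other point requiring care; once these are in place, the evaluation of $sgn(\sigma_c)$ by cycle-doubling and the cancellation of the two factors of $-1$ are routine.
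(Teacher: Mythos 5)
Your proposal is correct and follows essentially the same route as the paper: analyze the action of a single rotation $\psi_{i,j}$ on the $2n$ frame cubes, observe that it either fixes them all or performs one $4$-cycle through the four extremal positions on axes $i$ and $j$, deduce that opposite pairs map to opposite pairs (part 1), and note that this $4$-cycle is odd on cubes while inducing an odd transposition of the two pairs, so the two signs in $B_2$ flip together (part 2). Your extra bookkeeping (signed-permutation structure of $\sigma_c$, cycle-doubling sign computation) is sound but goes beyond what the paper's own, more terse, argument requires.
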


\begin{proof}
\textbf{1.},Для определённости выберем некоторый слой, содержащий кубик с положением
\[
E_0 \times E^{n-1}_{\md},
\]
и осуществим поворот. Понятно, что если поворот будет $\psi_{i,j}$, где $i,j > 1$, то он не изменит положения кубика.

Опять же, для определённости, возьмём $\psi_{1,2}$. Тогда произойдёт циклическая перестановка четвёрки кубиков (далее -- упрощённая запись)
\[
E_0 \times E_\md, \; E_\md \times E_0, \; E_{k-1} \times E_\md, \; E_\md \times E_{k-1},
\]
откуда видно, что произошла перестановка двух пар. Значит если выбрать какую-либо пару, то кубики в ней всегда будут одни и те же, так как пары переставляются целиком.

Разбиение инвариантно, а количество способов разбить $2n$ объекта на $n$ пар равно
\[
N=\frac{(2n)!}{2^n \cdot n!}.
\]

2. Пусть совершается поворот, изменяющий положение каркаса. Он переставляет четыре кубика и, следовательно, 2 пары. Тогда обе чётности $sgn (\sigma_c)$ и $sgn (\sigma_p)$ меняются одновременно, откуда следует инвариантность $B_2$.
\end{proof}

\subsection{Центральный инвариант}
    
\begin{theorem}
Пусть $k$ нечётно. Выберем произвольное фиксированное состояние $S^{'}$ и определим инвариант для произвольного состояния куба $S$.

Пусть положения центрального класса
\[
(m,\md,\dots,\md)
\]
в $S$ и $S^{'}$ отличаются перестановкой $\sigma_m$ его элементов.

Тогда следующая величина
\[
C=\prod\limits_{m=1}^n sgn (\sigma_m) \in \Z_2
\]
является инвариантом, называемым \textbf{центральным}.
\end{theorem}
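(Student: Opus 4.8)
План состоит в следующем. Достаточно проверить, что $C$ не меняется при одном повороте двумерного слоя $\psi_{i,j}$: произвольный поворот $t$-слоя распадается в произведение коммутирующих поворотов его двумерных подслоёв в плоскости $(i,j)$, а действие на каждый отдельный кубик от размерности слоя не зависит. Зафиксируем $S'$ и положим $\tilde S=\psi_{i,j}(S)$. Если $\tau_m$ — перестановка центрального класса $(m,\md,\dots,\md)$, индуцированная поворотом, то $sgn(\sigma_m(\tilde S))=sgn(\sigma_m(S))\cdot sgn(\tau_m)$, откуда
\[
C(\tilde S)=C(S)\cdot\prod_{m=1}^{n} sgn(\tau_m).
\]
Таким образом, вся теорема сводится к равенству $\prod_{m=1}^{n} sgn(\tau_m)=1$ для любого поворота.

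Сначала я опишу центральные кубики, лежащие в рассматриваемом двумерном слое. У него $n-2$ координаты зафиксированы, а координаты $i,j$ пробегают $M$. Центральный кубик обязан иметь каждую координату крайней ($0$ или $k-1$) либо в точности средней ($\mdd$); значит, если хотя бы одна из зафиксированных координат не такова, то центральных кубиков в слое нет и произведение знаков тривиально равно $1$. В противном случае обозначим через $a$ число крайних значений среди зафиксированных координат. Тогда центральными окажутся ровно те девять кубиков, у которых обе свободные координаты лежат в $\{0,\mdd,k-1\}$, а их классы суть $(a)$, $(a+1)$ и $(a+2)$ в зависимости от числа крайних значений среди $(x_i,x_j)$ (причём $0\le a\le n-2$, так что $1\le a+1<a+2\le n$).

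Главный шаг — проследить действие $\psi_{i,j}$ на эти девять кубиков. По лемме о положении поворот задаёт на индексах свободных координат отображение $(l,r)\mapsto(k-1-r,\,l)$, причём $k-1-\mdd=\mdd$ в силу нечётности $k$. Прямая проверка показывает, что четыре кубика с двумя крайними координатами образуют один $4$-цикл, целиком лежащий в классе $(a+2)$; четыре кубика ровно с одной крайней координатой образуют другой $4$-цикл, целиком в классе $(a+1)$; а единственный кубик $(\mdd,\mdd)$ класса $(a)$ остаётся неподвижным. Других центральных классов поворот не затрагивает. Поскольку $4$-цикл нечётен, получаем $sgn(\tau_{a+1})=sgn(\tau_{a+2})=-1$, а $sgn(\tau_m)=1$ при всех прочих $m$, откуда $\prod_{m=1}^{n} sgn(\tau_m)=(-1)(-1)=1$ и $C(\tilde S)=C(S)$.

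Наконец, чтобы $C$ был именно инвариантом, а не константой, замечу, что по теореме о кластерах любой центральный класс с $m\ge 1$ состоит из $\binom{n}{m}\cdot 2^m\ge 2$ уникальных (попарно различно окрашенных) кубиков; переставив два из них, получим состояние, у которого $sgn(\sigma_m)=-1$ лишь для одного $m$, то есть с $C=-1$. Основная тонкость доказательства — аккуратно установить, что нечётными (то есть содержащими по одному $4$-циклу) оказываются ровно два центральных класса $(a+1)$ и $(a+2)$: именно сокращение двух множителей $-1$ и обеспечивает сохранение произведения знаков.
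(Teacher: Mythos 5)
Ваше доказательство корректно, но идёт по заметно иному маршруту, чем в статье. Вы сводите всё к поворотам двумерных слоёв (через разложение поворота $t$-слоя в произведение коммутирующих поворотов его двумерных подслоёв) и затем явно прослеживаете девять центральных кубиков такого слоя: ровно два четырёхцикла, лежащие в классах $(a+1)$ и $(a+2)$, дают два взаимно сокращающихся множителя $-1$. Статья действует глобальнее: сначала в произведение добавляется тождественный множитель для $m=0$, затем множество всех центральных классов отождествляется с $n$-мерным кубиком с ребром $3$ посредством изоморфизма $\prd{E_{V_1} \dots E_{V_m} E^{n-m}_\md} \Longleftrightarrow \prd{E_{V_1} \dots E_{V_m} E^{n-m}_1}$, после чего для поворота произвольного $t$-слоя напрямую подсчитывается число четырёхциклов: $(3^t-3^{t-2})/4 = 2\cdot 3^{t-2}$ --- чётное число, откуда произведение знаков по всем классам сразу сохраняется. Ключевой комбинаторный факт один и тот же (поворот разбивает центральные кубики на неподвижные точки и чётное число четырёхциклов), но вы получаете его более элементарно --- без изоморфизма и с точной локализацией того, в каких именно классах происходят нечётные перестановки, --- тогда как подсчёт статьи единообразно покрывает все размерности слоя $t$ без отдельной леммы о разложении. Дополнительное достоинство вашего текста: вы явно проверяете, что $C$ не является константой (перестановка двух уникальных кубиков одного центрального класса даёт состояние с $C=-1$), что формально требуется определением инварианта в статье, но в её доказательстве опущено.
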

    
\begin{proof}
Покажем, что $C$ инвариантно относительно поворота произвольного слоя. Воспользуемся следующими приёмами для упрощения задачи
    
\begin{itemize}
    \item Добавим в произведение перестановку для центрального класса при $m=0$. Он состоит из одного элемента, центрального внутреннего кубика, и следовательно $\sigma_0$ всегда тождественная. Тогда на величину $C$ это не повлияет.
    \item Существует изоморфизм между множеством центральных классов с операцией поворота слоя и $n$-мерным Кубиком Рубика с ребром 3 (рис. \ref{sec5sub1pic1}):
    \[
    \prd{E_{V_1} \dots E_{V_m} E^{n-m}_\md} \Longleftrightarrow \prd{E_{V_1} \dots E_{V_m} E^{n-m}_1}.
    \]
    Он осуществляет замену $k$ на 3 в записи слоёв и положений кубиков и их граней.
\end{itemize}
    
\begin{figure}
    \centering
    \includegraphics[width=0.8\linewidth]{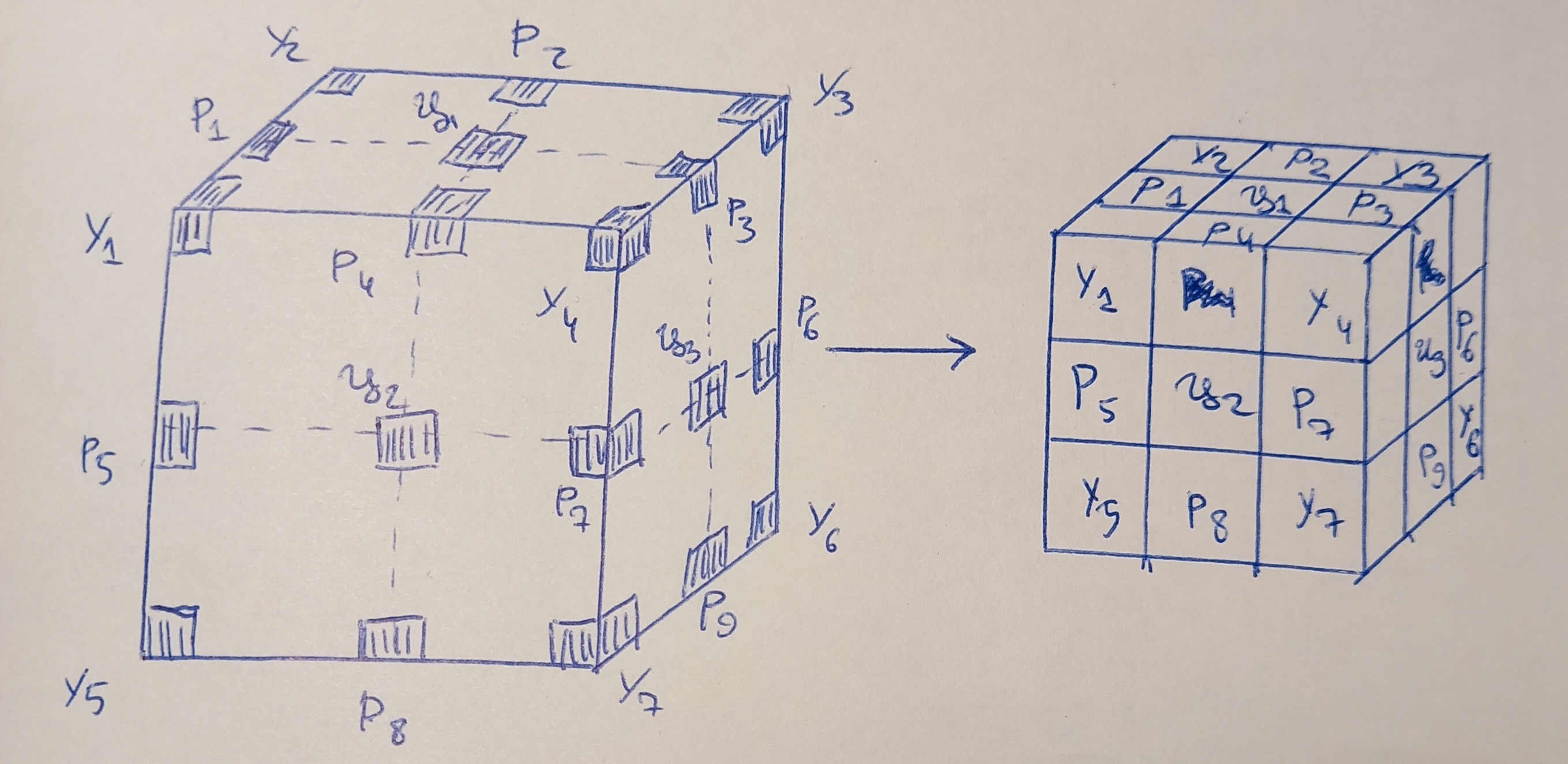}
    \caption{}
    \label{sec5sub1pic1}
\end{figure}

$\psi_{ij}$ $t$-мерного слоя не переставляет кубики, у кототорых $i$-ое и $j$-ое множества положений равны $E_1$. Остальные же множества положений таких кубиков могут быть произвольными. Тогда из $3^t$ элементов слоя $3^{t-2}$ инварианты относительно поворота. Все остальные разбиваются на циклически переставляющиеся четвёрки. Каждая из них осуществляет нечётную перестановку, но так как всего четвёрок чётное число
\[
\frac{3^t - 3^{t-2}}{4} = 2 \cdot 3^{t-2}, 
\]
то $C$ сохраняется.
\end{proof}
    
При доказательстве полноты системы инвариантов нам понадобится следующая лемма.
    
\begin{lemma} \label{central}
Поворот $t$-мерного внешнего слоя не меняет положения класса
\[
(m,\md, \dots ,\md) \text{ при } m \leq (n-t),
\]
но осуществляет нечётную перестановку класса
\[
(n-t+1,\md, \dots ,\md).
\]
\end{lemma}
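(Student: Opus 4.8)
Сперва зафиксируем обозначения. Центральный класс $(m,\md,\dots,\md)$ состоит из кубиков с положениями $\prd{E_V^m E_\md^{n-m}}$, а внешний $t$-мерный слой задаётся как $\prd{E_V^{n-t} \Omega_M^t}$: в нём $(n-t)$ координат \emph{фиксированы} и равны $E_V$, а оставшиеся $t$ координат (назовём их \emph{свободными}) пробегают $\Omega_M$. Поворот $\psi_{i,j}$ такого слоя действует по двум свободным координатам $i,j$. Ключевое наблюдение: так как $E_\md$ никогда не совпадает с $E_V$, кубик класса попадает в слой тогда и только тогда, когда все $(n-m)$ его множеств $E_\md$ стоят на свободных позициях, а все фиксированные позиции заняты множествами $E_V$, совпадающими со значениями слоя. Поэтому весь план сводится к подсчёту, сколько множеств $E_V$ и $E_\md$ кубика приходится на свободные позиции $i,j$, и к применению леммы о положении.

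\textbf{Случай} $m \leq n-t$. Если $m < n-t$, то $m$ множеств $E_V$ кубика не могут покрыть все $(n-t)$ фиксированных позиций слоя, так что ни один кубик класса в слой не попадает и его положение заведомо не меняется. Если же $m = n-t$, то $m$ множеств $E_V$ в точности заполняют фиксированные позиции, а все $n-m = t$ множеств $E_\md$ оказываются на свободных. В частности, на обеих координатах $i,j$ стоит $E_\md$, и по лемме о положении (случай $l = r = \mdd$) поворот оставляет кубик на месте. Значит, положение всего класса сохраняется.

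\textbf{Случай} $m = n-t+1$. Теперь из $(n-t+1)$ множеств $E_V$ ровно $(n-t)$ идут на фиксированные позиции, а одно остаётся на свободной; прочие $t-1$ свободных позиций заняты множествами $E_\md$. Таких кубиков в слое ровно $2t$: позицию единственного свободного $E_V$ выбираем $t$ способами, а его значение ($E_0$ или $E_{k-1}$) --- двумя. Поворот $\psi_{i,j}$ сдвигает кубик лишь когда его свободное $E_V$ стоит на позиции $i$ или $j$, то есть ровно четыре кубика с упрощёнными положениями $E_0 \times E_\md,\ E_\md \times E_0,\ E_{k-1} \times E_\md,\ E_\md \times E_{k-1}$ (напомним, $\overline{E_\md} = E_\md$). Это в точности четвёрка из леммы о положении при $l=0,\ r = \mdd$, а значит она переставляется поворотом в один $4$-цикл, тогда как остальные кубики класса неподвижны. Поскольку $4$-цикл нечётен, поворот осуществляет нечётную перестановку класса.

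Наиболее содержательным шагом я считаю именно проверку того, что в случае $m = n-t+1$ четыре подвижных кубика образуют единый $4$-цикл, а не, скажем, две транспозиции, ведь от этого и зависит нечётность перестановки; здесь достаточно сослаться на лемму о положении, описывающую орбиту из четырёх положений. Всё остальное --- аккуратный учёт того, сколько множеств $E_V$ и $E_\md$ попадает на свободные и фиксированные позиции, и потому никаких технических трудностей не представляет.
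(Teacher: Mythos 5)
Ваше доказательство верно и по существу совпадает с доказательством в статье: тот же разбор случаев $m<n-t$, $m=n-t$ и $m=n-t+1$, та же ссылка на лемму о положении и то же выделение четвёрки кубиков $E_0 \times E_\md$, $E_\md \times E_0$, $E_{k-1} \times E_\md$, $E_\md \times E_{k-1}$, циклически переставляемых поворотом. Единственное отличие чисто презентационное: статья фиксирует для определённости конкретный слой $\Omega_M^t \times E_0^{n-t}$ и поворот $\psi_{1,2}$, тогда как вы работаете с произвольным внешним слоем и поворотом $\psi_{i,j}$.
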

    
\begin{proof}
Для определённости возьмем слой
\[
\Omega_M^t \times E_0^{n-t}
\]
и его поворот $\psi_{1,2}$.

\textbf{1.} Пусть $m \le (n-t)$. Согласно определению запись положения кубика из $(m,\md,\dots,\md)$ содержит $m$ множеств $E_V$, в то время как для элементов слоя их должно быть не менее $(n-t)$. Следовательно, при $m<(n-t)$ элементы класса не содержатся в слое.

Если же $m=(n-t)$, то положение кубика класса записывается единственым образом
\[
E_{\md}^t \times E_0^{n-t}.
\]
Так как первые два множества -- $E_\md$, то положение инвариантно относительно $\psi_{1,2}$.

\textbf{2.} При $m=(n-t+1)$ среди первых $t$ множеств положения будет ровно одно $E_V$, а остальные -- $E_\md$. Так как нас прежде всего интересуют кубики, на положения которых $\psi_{1,2}$ влияет, то первыми двумя множествами положения не могут быть $E_\md$. Легко увидеть, что лишь четыре кубика меняют позиции при повороте (далее - упрощённая запись)
\[
E_0 \times E_{\md}, \; E_{\md} \times E_0, \; E_{k-1} \times E_{\md}, \; E_{\md} \times E_{k-1},
\]
и происходит это циклически. Значит, осуществляется нечётная перестановка класса.
\end{proof}

\subsection{Ориентационный инвариант}

\begin{theorem}
Пусть дан класс кубиков $\class$, для которого
\[
Q=\left[\oG,\oG\right] \neq \oG.
\]
Обозначим за $\phi_i$ ориентационную перестановку его $i$-го элемента, а за $N$ их общее количество.

Тогда смежный класс
\[
O=\left(\prod\limits_{i=1}^{N} \phi_i\right) Q \in \oG/Q,
\]
является \textbf{ориентационным} инвариантом.
\end{theorem}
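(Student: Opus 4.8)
The plan is to exploit the fact that, modulo the commutant $Q=[\oG,\oG]$, the quotient $\oG/Q$ is abelian, so that the product $\prod_{i=1}^{N}\phi_i$, once reduced into $\oG/Q$, no longer depends on the order in which the $N$ cubes of the class are enumerated. First I would invoke the lemma that produces a numbering of the faces and a choice of correct orientations under which every orientational permutation $\phi_i$ is an element of $\oG$ and remains in $\oG$ after any rotation; this is exactly what makes $O=\left(\prod_{i=1}^{N}\phi_i\right)Q$ a well-defined element of $\oG/Q$ in the first place, and it is also what makes the order-independence meaningful.

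Next I would verify invariance against a single layer rotation $\psi_{s,t}$, since every combination is a sequence of such rotations and invariance is preserved under composition. A rotation splits the cubes of the class into three kinds: those lying outside the rotated layer, those lying inside it but fixed by it, and those moved in cyclic quadruples. A cube of the first kind is untouched. A cube of the second kind has simplified position $E_{\mdd}\times E_{\mdd}$, so by part~1 of the orientation lemma its orientational permutation is unchanged. Hence only the moved quadruples can affect the product.

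I would then fix one such quadruple and enumerate its cubes $1,2,3,4$ as in the orientation lemma, with transitions $\alpha_1,\dots,\alpha_4\in\oG$; that lemma gives $\phi_l^{'}=\phi_l\alpha_l$ together with $\alpha_4\alpha_3\alpha_2\alpha_1=e$. Computing the full product over the class inside the abelian group $\oG/Q$, every unmoved cube contributes its unchanged $\phi_i$, while the four moved cubes jointly contribute the extra factor $\alpha_1\alpha_2\alpha_3\alpha_4$. Because $\oG/Q$ is abelian, $\alpha_1\alpha_2\alpha_3\alpha_4\equiv\alpha_4\alpha_3\alpha_2\alpha_1=e\pmod{Q}$, so the quadruple leaves $O$ fixed; the same holds for each of the (possibly several, disjoint) quadruples of the rotation, so $O$ is preserved. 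Finally, the hypothesis $Q\neq\oG$ is used only to exhibit that $O$ genuinely takes more than one value: picking $g\in\oG\setminus Q$ and physically placing one cube with permutation $g$ and all others correctly oriented yields $O=gQ\neq Q$, so $O$ is a nonconstant invariant rather than a tautology.

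The main obstacle is conceptual rather than computational: one must notice that the relation $\alpha_4\alpha_3\alpha_2\alpha_1=e$ records the factors in the reverse of the order in which they accumulate inside $\prod_i\phi_i^{'}$, and that passing to the abelian quotient $\oG/Q$ is precisely what reconciles the two orderings. I would also be careful to confirm that disjoint quadruples occurring in a single rotation each contribute an independent factor congruent to $e$, and that the value of $O$ is insensitive to the chosen enumeration of the cubes exactly because $\oG/Q$ is abelian.
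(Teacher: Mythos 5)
Your proposal is correct and follows essentially the same route as the paper: pass to the abelian quotient $\oG/Q$, split the product into cubes fixed by the rotation and cyclically permuted quadruples, and cancel each quadruple's contribution using $\phi_l^{'}=\phi_l\alpha_l$ together with $\alpha_4\alpha_3\alpha_2\alpha_1=e$ from the orientation lemma. Your two additions --- explicitly invoking the numbering lemma so that each $\phi_i$ lies in $\oG$ (well-definedness of $O$), and using $Q\neq\oG$ to exhibit a state with $O\neq Q$ (so the invariant is nonconstant) --- are points the paper leaves implicit, and they strengthen rather than change the argument.
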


\begin{proof}
Факторгруппа по коммутанту является коммутативной. Это вытекает из того, что:
\[
\forall a,b \in \overline G_{rot}: ab(ba)^{-1}=aba^{-1}b^{-1}=[a,b] \in Q \Longrightarrow ab \sim ba.
\]
Обозначим за $\phi_i^{'}$ ориентационную перестановку $i$-го кубика после поворота. Далее под перестановками $\phi$ и $\alpha$ имеем в виду соответствующие смежные классы $\phi \, Q$ и $\alpha \, Q$.

Произведение по ориентационным перестановкам разбивается на произведение по четвёркам кубиков и по кубикам, не меняющим положения. Для последних, конечно, $\phi^{'}=\phi$. Для первых, в силу коммутативности факторгруппы и леммы об ориентации, имеем
\[
\phi_{1}^{'} \phi_{2}^{'} \phi_{3}^{'} \phi_{4}^{'} = \phi_{1} \alpha_{1} \phi_{2} \alpha_{2} \phi_{3} \alpha_{3} \phi_{4} \alpha_{4} = (\phi_{1} \phi_{2} \phi_{3} \phi_{4}) \cdot (\alpha_{4} \alpha_{3} \alpha_{2} \alpha_{1}) = \phi_{1} \phi_{2} \phi_{3} \phi_{4}.
\]
Таким образом, $O$ -- инвариант.
\end{proof}

\begin{Classorient}
Предположим, что даны два состояния класса, которые отличаются лишь ориентациями кубиков. Если
\begin{itemize}
    \item их ориентационные инварианты совпадают (или отсутствуют)
    \item для каждого кубика справедливо, что ориентационная перестановка $\phi_i^{'}$ в одном состоянии достижима из $\phi_i$ в другом. Иначе говоря,
    \[
    \phi_i^{'} = \phi_i \cdot z
    \]  
    для некоторой $z$ из зависимой группы
\end{itemize}
то существует аккуратная комбинация, ориентирующая класс, переводя его из одного состояния в другое.
\end{Classorient}

\begin{proof}
Если ориентационный инвариант отсутствует, то зависимая группа совпадает со своим коммутантом. Тогда по теореме \ref{onecubeorient} можно аккуратно изменить ориентацию каждого кубика произвольным образом.

Пусть оба инварианты существуют. Ориентируем все элементы класса кроме одного, как это было сделано в замечании к теореме \ref{twocubesorient}.

Ориентационный инвариант в обоих состояниях один и тот же, как и ориентации всех кубиков, кроме одного. Это означает, что ориентационные перестановки оставшегося кубика в этих состояниях принадлежат одному и тому же смежному классу. То есть,
\[
\phi^{'} = \phi \cdot z
\]
для некоторой $z$ из коммутанта, а такое ориентирование заведомо осуществимо по теореме \ref{onecubeorient}.
\end{proof}

\subsection{Кластерный инвариант}

\begin{deff}
Пусть дан кластер знакопеременного класса $\class$, где $m>1$. Тогда, он раскрашен в $m$ цветов. Занумеруем ориентационные грани кубиков кластера согласно раскраске: каждому цвету свое число от 1 до $m$. Также сопоставим каноническому положению правильную ориентацию.

\textbf{Упрощённой ориентацией} кубика данного кластера, называется число $or$, равное
\begin{itemize}
    \item 1, если можно перевести кубик в положение $P$ и правильно ориентировать.
    \item 0, если нельзя перевести кубик в $P$ и одновременно с этим правильно ориентировать.
\end{itemize}
\end{deff}

Кластеры с упрощённой ориентацией указаны на рисунках \ref{sec5sub4pic1} и \ref{sec5sub4pic2}. На первом из них пример упрощённых ориентаций трёх нетривиальных кластеров трёхмерного Кубика Рубика. На втором рисунке указан четырёхмерный кластер из 8 элементов, каждый из которых имеет 2 трёхмерные ориентационные грани.

\begin{remark}
Исходное определение можно сформулировать немного иначе.

Определим правильную ориентацию для $P$. Затем кубик канонического положения и правильной ориентации переместим в произвольное положение $A$. Его новая ориентация считается правильной ориентацией $A$.

Такая нумерация граней и выбор правильных ориентаций приводит к тому, что новые ориентационные перестановки кубиков кластера уже не обязаны лежать в зависимой группе.
\end{remark}

Так как класс знакопеременный (и не угловой), то $\oG=A_m$ и $\Gr=S_m$. Следовательно факторгруппа $\Gr/\oG \cong \Z_2$. Значит, все ориентационные перестановки принадлежат одному из двух классов смежности по зависимой группе. Если две перестановки лежат в одном классе, то можно перейти от одной ориентации кубика к другой.

Теперь легко заметить, что упрощённая ориентация лишь указывает на класс смежности, которому принадлежит ориентационная перестановка кубика. Если в двух состояниях кубика одна и та же упрощённая ориентация, то можно от обычной ориентации кубика в одном состояний перейти к ориентации во втором.

\begin{figure}[h]
\begin{center}
\begin{minipage}[h]{0.44\linewidth}
\includegraphics[width=1\linewidth]{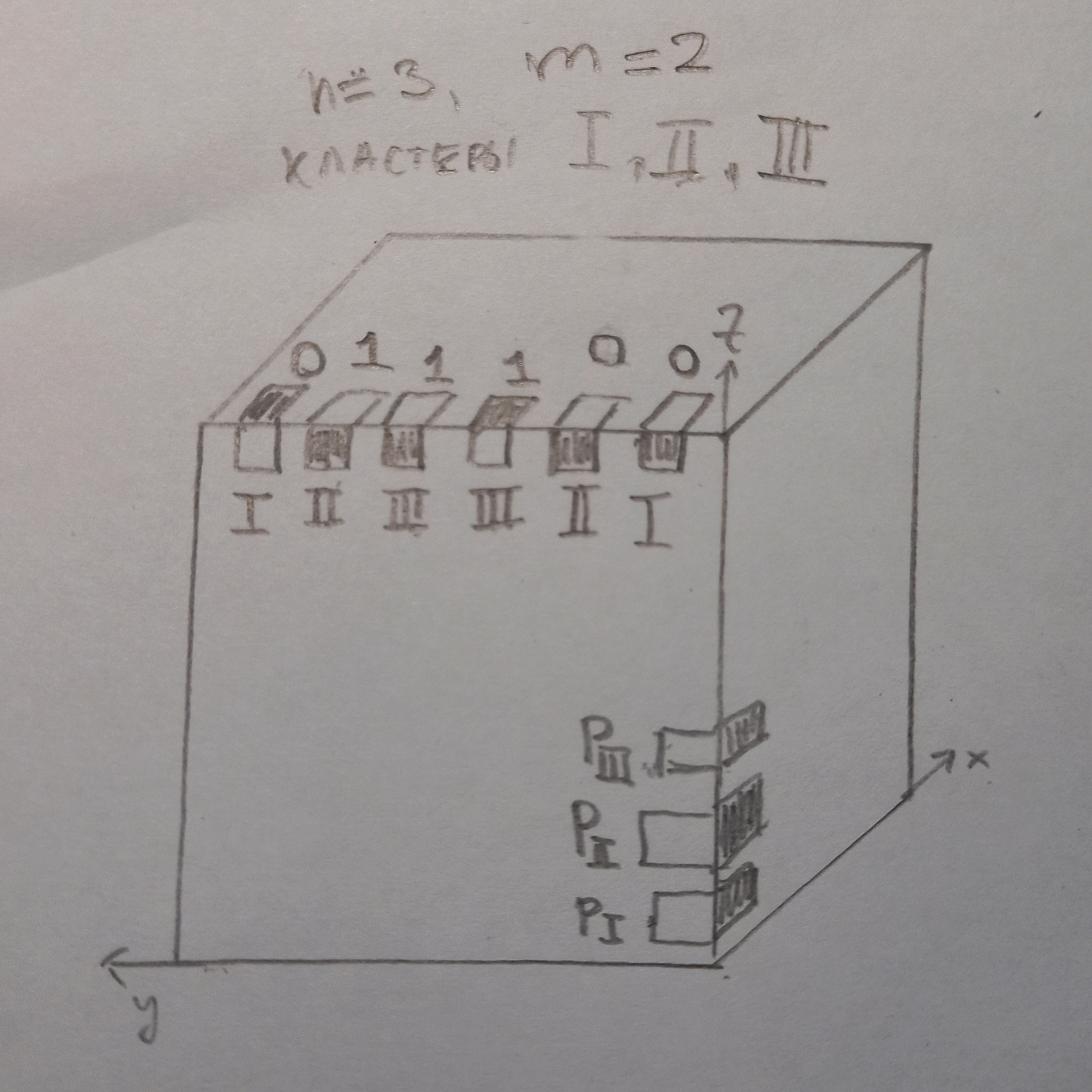}
\caption{}
\label{sec5sub4pic1}
\end{minipage}
\begin{minipage}[h]{0.55\linewidth}
\includegraphics[width=1\linewidth]{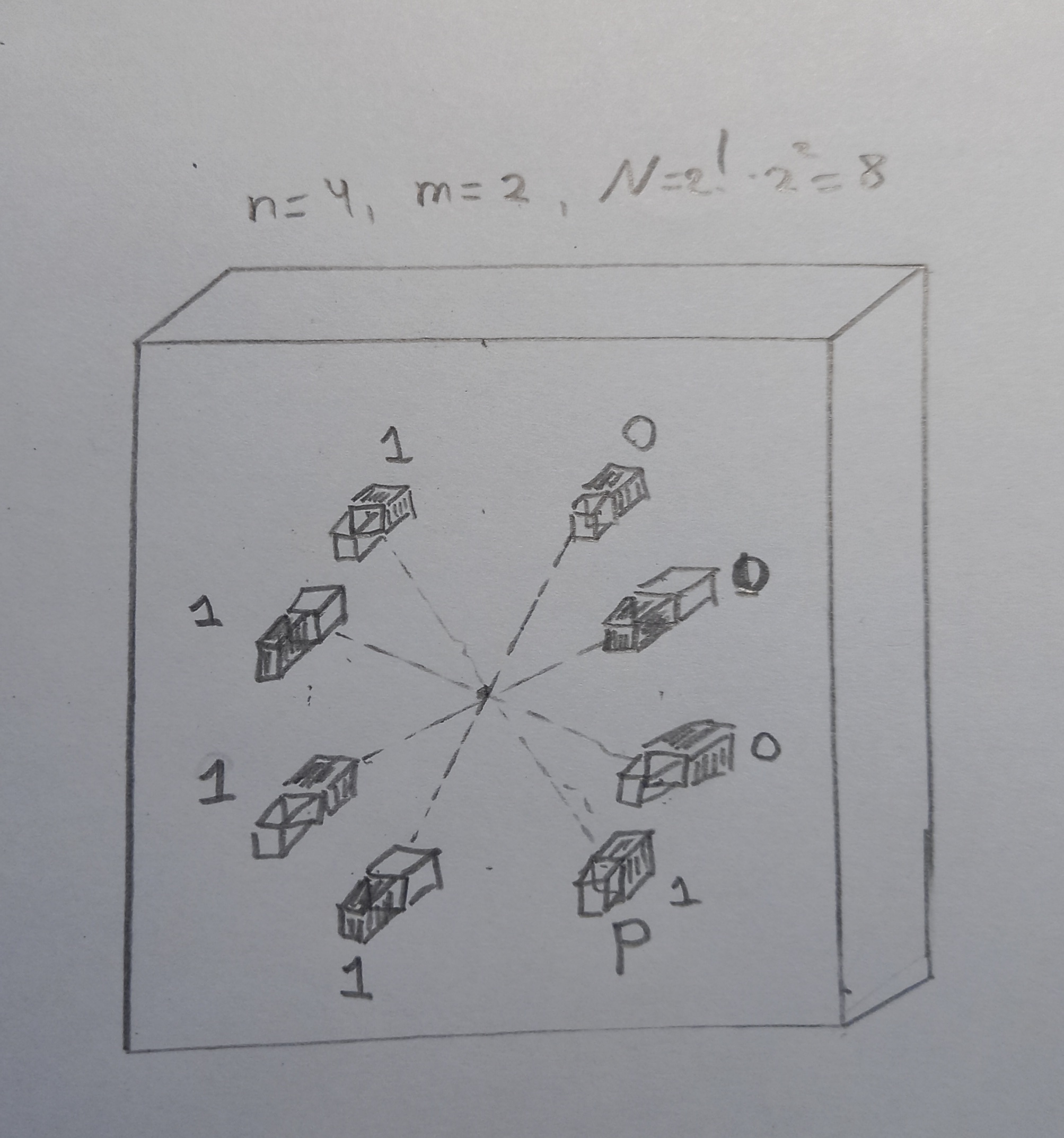}
\caption{}
\label{sec5sub4pic2}
\end{minipage}
\end{center}
\end{figure}

\begin{theorem}
Введём упрощённую ориентацию для знакопеременного класстера из $\class$, где $m>1$.

За $or_i$ обозначим упрощённую ориентацию $i$-го кубика, за
\[
N=(n-m)! \cdot 2^{n-m}
\]
общее количество кубиков кластера. Тогда величина
\[
Cl=\sum_{i=1}^{N} or_i \in \Z_{N+1}
\]
является инвариантом. Будем называть его \textbf{кластерным}.
\end{theorem}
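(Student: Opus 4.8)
The plan is to verify that $Cl$ is unchanged by a single layer rotation, since rotations generate all moves. First I would recall, from the remark preceding the theorem, that for an alternating non-angular cluster one has $\oG=A_m$ and $\Gr=S_m$, so $\Gr/\oG\cong\Z_2$; consequently each $or_i\in\{0,1\}$ is genuinely two-valued and records exactly the coset in $\Gr/\oG$ of the $i$-th piece, i.e.\ whether that piece's current state can be brought by layer moves to the canonical position $P$ with the correct orientation. Thus $Cl=\sum_{i=1}^{N}or_i$ counts the pieces of the cluster whose state is reachable to $(P,\text{correct})$, and since $0\le Cl\le N$ it indeed lands in $\{0,\dots,N\}\cong\Z_{N+1}$.

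The core step I would carry out directly at the level of reachability. By definition $or_i=1$ means precisely that the state of the $i$-th piece lies in the same orbit, under the group generated by layer rotations, as the fixed reference state $(P,\text{correct orientation})$. Any rotation $h$ is an invertible element of this group, so a piece's state is reachable to the reference if and only if its $h$-image is (compose the witnessing combination with $h^{-1}$). Therefore every $or_i$ is individually invariant, and $Cl$, being their sum, is invariant; this already yields the full statement valued in $\Z_{N+1}$.

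It is instructive to cross-check this against the method used for the orientational invariant. A rotation $\psi_{ij}$ either fixes a piece's position, leaving its orientation permutation and hence $or_i$ unchanged, or cyclically permutes four pieces with $\phi_l'=\phi_l\alpha_l$ and $\alpha_4\alpha_3\alpha_2\alpha_1=e$ by the orientation lemma. Projecting to the abelian quotient $\Gr/\oG\cong\Z_2$ and summing over the four pieces gives $\sum_l or_l'=\sum_l or_l+\sum_l[\alpha_l]=\sum_l or_l+[\alpha_4\alpha_3\alpha_2\alpha_1]=\sum_l or_l$, so $Cl$ is preserved modulo $2$; the honest count in $\Z_{N+1}$ is then supplied by the per-piece invariance of the previous paragraph.

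The step needing the most care — and the main obstacle — is making sure one works with the correct notion of $or_i$. If instead one fixes the ``correct orientation'' at every position by rigidly transporting the $P$-piece (as in the middle remark), then a single rotation can act on two vertex-directions of a piece and induce an \emph{odd} transposition of its orientation faces, which would appear to flip that piece's coset and change the count by $\pm1$. The resolution is that the officially defined $or_i$ measures reachability to the single fixed reference $(P,\text{correct})$ rather than to a position-dependent transported reference, so this apparent flip is exactly absorbed by the change of reference frame and the genuine quantity stays path-independent. Checking this compatibility — equivalently, that the two descriptions of $or_i$ agree — is the delicate point, and it is cleanest to phrase the whole argument in terms of reachability.
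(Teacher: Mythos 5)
Your proposal is correct and its core coincides with the paper's own proof: the paper likewise argues that each $or_i$ is individually invariant, phrasing it as a contradiction --- if two connected cluster states had different counts, some cubie could simultaneously be and not be brought to $P$ with correct orientation, which is impossible since reachability to the fixed reference state is preserved by the (invertible) layer rotations. Your mod-$2$ cross-check and the discussion of the fixed versus transported reference are extra commentary, but the essential argument is the same as the paper's.
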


\begin{proof}
$Cl$ равен количеству кубиков, которые можно перевести в $P$ и правильно ориентировать. Предположим, что у некоторых двух связанных состояний кластера инварианты различны. Это означает, что в кластере найдётся кубик, который одновременно можно и нельзя правильно ориентировать в положении $P$. Пришли к противоречию.
\end{proof}

\section{Полная система инвариантов}

\subsection{Описание полной системы инвариантов}

Ниже представлена система всех описанных ранее инвариантов. Её полнота будет доказана в следующем разделе.

\begin{itemize}
  \item $B_1 \in \Z_{\frac{(2n)!}{2^n \cdot n!}}$ и $B_2 \in \Z_2$ - каркасные инварианты, существующие для нечётного $k$.
  \item $C \in \Z_2$ - центральный инвариант, существующий также только для нечётного $k$.
  \item $O \in \oG/Q$ - ориентационный инвариант, определённый для каждого класса, у которого $Q \neq \oG$.
  \item $Cl \in \Z_{(n-m)! \cdot 2^{n-m}+1}$ - кластерный инвариант, определённый для любого нетривиального знакопеременного кластера из
  \[
  \class.
  \]
\end{itemize}

Вспомним, что $\binom{n}{s}$ обращается в нуль при $n\leq 0$, или $s<0$, или $s>n$.

\begin{theorem}
Количество попарно несвязанных состояний $n$-мерного Кубика Рубика с ребром длины $k$ равно
\[
S(n,k)=BC(n,k) \cdot OA(n,k) \cdot OS(n,k) \cdot Cl(n,k),
\]
\[
BC(n,k)=\begin{cases}
1, & \text{если $k$ чётно} \\
\frac{(2n)!}{2^{n-2} \cdot n!}, & \text{если $k$ нечётно}
\end{cases}
\]
\[
OA(n,k)=3^N, \; N=\binom{[k/2]-1}{n-3} + \binom{[k/2]-1}{n-4}+c, \; c=\begin{cases}
1, & \text{если $n=3,4$} \\
0, & \text{иначе}
\end{cases}
\]
\[
OS(n,k)=2^M, \; M=\sum^{n-2}_{m=1} \binom{[\md] + m - 1}{[\md]-1} - \sum^{n-2}_{m=1} \binom{[k/2] - 1}{m}
\]
\[
Cl(n,k)=\prod_{m=1}^{n-2} (m! \cdot 2^m + 1)^{L(m)}, \; L(m)=\binom{[k/2]-1}{m}\cdot \binom{n}{m}\cdot 2^{n-m}
\]
\end{theorem}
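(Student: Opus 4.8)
The plan is to reduce counting the pairwise non-connected states to a single product over the invariants tabulated above. By construction each quantity $B_1,B_2,C,O,Cl$ is constant along connectedness classes, so connected states share all invariant values; granting the converse — completeness, i.e.\ that coinciding values force connectedness, which is the content of the next section — together with joint independence (every admissible tuple of values is realized by some state, which is exactly what the permutation theorem, the orientation theorems \ref{twocubesorient}--\ref{onecubeorient}, and the cluster analysis provide), the number of connectedness classes equals the product over all invariants of the cardinality of its value-set. The four factors $BC,OA,OS,Cl$ are then simply the partial products obtained by grouping the invariants by type, so the whole proof is an enumeration of each group. The easiest is $BC$: the framework invariants $B_1\in\Z_{(2n)!/(2^n n!)}$, $B_2\in\Z_2$ and the central invariant $C\in\Z_2$ all exist only for odd $k$ and are otherwise absent, so for even $k$ this factor is $1$, while for odd $k$ it is $\frac{(2n)!}{2^n n!}\cdot 2\cdot 2=\frac{(2n)!}{2^{n-2} n!}$, the stated $BC(n,k)$.

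For the two orientation factors I would split every class by its dependent group via the two-groups theorem, noting that a class carries an orientation invariant exactly when $Q=[\oG,\oG]\neq\oG$. If $\oG=A_m$ (the alternating classes) then $\oG/Q$ is nontrivial only for $m=3$ and $m=4$, where in both cases it is $\cong\Z_3$ (for $m\ge 5$ the group $A_m$ is perfect, and $A_1,A_2$ are trivial); each such class contributes a factor $3$, giving $OA=3^N$ with $N$ the number of alternating classes of parameter $3$ or $4$. Counting these by their characteristics — which for a non-corner alternating class must be strictly increasing and strictly below $\mdd$, hence chosen from $\{1,\dots,[k/2]-1\}$ — produces the two binomial terms, while the corner class $(n)$, alternating with $\oG=A_n$, contributes precisely when $n\in\{3,4\}$, accounting for the correction $c$. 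If instead $\oG=S_m$ (the symmetric classes, necessarily with $m<n$), then $\oG/Q=S_m/A_m\cong\Z_2$ for every $m\ge 2$, so each symmetric class with $m\ge 2$ contributes a factor $2$; counting symmetric classes with a fixed number $n-m$ of characteristics as the number of all non-decreasing characteristic sequences minus the alternating (strictly increasing, below $\mdd$) ones, and summing over $m$, yields the exponent $M$ and $OS=2^M$.

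For $Cl(n,k)$ I would invoke the cluster theorem together with its corollary. Cluster invariants live on nontrivial alternating clusters, which forces a non-central alternating class with $1<m<n$; such a class has distinct characteristics below $\mdd$, splits into $\binom{n}{m}2^m$ clusters, and each cluster of size $(n-m)!\cdot 2^{n-m}$ carries a $\Z_{(n-m)!\cdot 2^{n-m}+1}$ invariant. Reindexing by the number of characteristics $m'=n-m$ turns the per-cluster factor into $m'!\cdot 2^{m'}+1$ and the total number of such clusters into $L(m')=\binom{[k/2]-1}{m'}\binom{n}{m'}2^{n-m'}$; the product over $m'=1,\dots,n-2$ is $Cl(n,k)$, and multiplying the four factors gives $S(n,k)$.

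The main obstacle is the completeness-and-independence pair underlying the very first reduction: that no two states with identical invariants fail to be connected, and dually that every admissible tuple of values actually occurs. Completeness is deferred to the next section, so within this theorem the real labor is the bookkeeping of the enumeration — in particular the floor-function accounting that separates $[k/2]$ from $[\mdd]=[(k-1)/2]$ across the two parities of $k$, and the exceptional small cases $m\in\{3,4\}$ and $n\in\{3,4\}$, where $A_m$ ceases to be perfect and the corner class must be handled separately.
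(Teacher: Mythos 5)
Ваше предложение идёт по существу тем же путём, что и доказательство в статье: то же разложение на четыре множителя $BC \cdot OA \cdot OS \cdot Cl$, та же классификация классов по зависимой группе через теорему о двух группах (множитель $3$ лишь для $A_3$ и $A_4$, множитель $2$ для $S_m$ при $m \ge 2$), тот же подсчёт характеристик биномиальными коэффициентами с угловой поправкой $c$ и та же переиндексация $m \mapsto n-m$ в кластерном множителе. Единственное отличие --- вы явно выделяете требование совместной реализуемости всех наборов значений инвариантов, которое в статье остаётся неявным; само же вычисление совпадает с авторским.
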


\begin{proof}
Количество попарно несвязанных состояний равно количеству различных значений полной системы инвариантов. Обозначим его за $S(n,k)$, тогда оно получается перемножением
\[
S(n,k)=BC(n,k) \cdot OA(n,k) \cdot OS(n,k) \cdot Cl(n,k),
\]
где
\begin{itemize}
    \item $BC(n,k)$ -- число различных значений тройки $(B_1,B_2,C)$,
    \item $OA(n,k)$ -- число различных значений $(O_1,\dots,O_N)$, $N$ -- количество знакопеременных класcов,
    \item $OS(n,k)$ -- число различных значений $(O_1,\dots,O_M)$, $M$ -- количество симметричных класcов,
    \item $Cl(n,k)$ -- число различных значений $(Cl_1,\dots,Cl_L)$, $L$ -- количество нетривиальных знакопеременных кластеров.
\end{itemize}

\textbf{Центральный инвариант и каркасные инварианты.}

Если $k$--чётно, то инвариантов нет. Если $k$ нечётно, то количество различных значений троек $(B_1,B_2,C)$
\[
2 \cdot 2 \cdot \frac{(2n)!}{2^n \cdot n!}=\frac{(2n)!}{2^{n-2} \cdot n!}.
\]

\textbf{Ориентационный инвариант.}

\textbf{1.} Пусть $\oG=A_m$.

Если $m \ge 5$, то коммутант совпадает с самой группой. При $m = 1,2$ группа вращения тождественна, но если
\begin{itemize}
    \item $m=3$, то $\oG/Q=A_3/[A_3,A_3]=A_3 \cong \Z_3$
    \item $m=4$, то $\oG/Q=A_4/[A_4,A_4]=A_4/H \cong \Z_3$, где $H$ -- четверная группа Клейна.
\end{itemize}

Значит, если этот инвариант определён для некоторого знакопеременного класса, то он принимает ровно три значения. Тогда
\[
OA(n,k)=3^N.
\]
Найдём количество знакопеременных классов $\class$ при $m=3,4$
\[
m=n \text{ или } 0<j_1<...<j_{n-m}<\md.
\]
Пусть соблюдено второе. Нужное количество равно количеству способов, которыми можно выбрать натуральные числа $j_i$, удовлетворяющие условию. Иначе говоря, число сочетаний из $([k/2]-1)$ (количество целых чисел в интервале $(0,\md)$) по количеству характеристик $(n-m)$
\[
\binom{[k/2]-1}{n-m}.
\]
И количество для $m=3$ и $m=4$ вместе взятых будет
\[
N=\binom{[k/2]-1}{n-3} + \binom{[k/2]-1}{n-4}=\binom{[k/2]}{n-3}
\]
по свойству биномиального коэффициента. Однако сокращенную запись мы использовать не будем, так как это равенство не всегда работает в наших случаях. Действительно, если $k=2$ или 3, то по договорённости оба левых биномиальных коэффициента обнуляются при любом $n$, в отличие от коэффициента справа. Упрощённая форма верна только для $k \geq 4$.

Пусть соблюдено первое условие. Тогда если $n=3$ или $n=4$, то необходимо посчитать дополнительный класс $m=n$. Итого
\[
N=\binom{[k/2]-1}{n-3} + \binom{[k/2]-1}{n-4}+c, \; c=\begin{cases}
1, & \text{если $n=3,4$} \\
0, & \text{иначе}
\end{cases}
\]

\textbf{2.} Пусть $\oG=S_m$.

Тогда для любого $m>1$
\[
S_m/[S_m,S_m]=S_m/A_m \cong \Z_2
\]
и
\[
OS(n,k)=2^M.
\]
Найдём M, количество всех симметрических классов при $1<m<n$.

Для начала, найдём общее количество классов в $(m), m<n$
\[
1 \leq j_1 \leq ... \leq j_{n-m} \leq \md.
\]
Это число сочетаний (с повторениями) из количества целых чисел на отрезке $[1,[\md]]$ по количеству характеристик $(n-m)$
\[
\binom{[\md] + n - m - 1}{n-m}=\binom{[\md] + n - m - 1}{[\md]-1}.
\]
Следовательно, симметрических классов в тривиальном классе
\[
\binom{[\md] + n - m - 1}{[\md]-1} - \binom{[k/2] - 1}{n-m}.
\]
Осталось лишь просуммировать эти величины по всем $1<m < n$
\[
M=\sum^{n-1}_{m=2} \binom{[\md] + n-m - 1}{[\md]-1} - \sum^{n-1}_{m=2} \binom{[k/2] - 1}{n-m} = \sum^{n-2}_{m=1} \binom{[\md] + m - 1}{[\md]-1} - \sum^{n-2}_{m=1} \binom{[k/2] - 1}{m} =
\]
\[
= \binom{[\md] + n - 2}{[\md]} - \sum^{n-2}_{m=0} \binom{[k/2] - 1}{m}
\]
по свойству биномиального коэффициента. В частности, если $n-2 \geq [k/2] - 1$ (то есть, при $k<2n$), то последняя сумма равна $2^{[k/2] - 1}$.

Однако в виду договорённостей, как и в предыдущем пункте, мы не можем использовать упрощённый результат в общем случае. Он годится лишь для $k \geq 4$.

\textbf{Кластерный инвариант.} Вспомним, что в $(m)$ находится
\[
\binom{[k/2]-1}{n-m}
\]
знакопеременных классов. Каждый из них разбивается на
\[
\binom{n}{m} \cdot 2^m
\]
кластеров, каждый со своим инвариантом. Всего знакопеременных кластеров в $(m)$
\[
L(m)=\binom{[k/2]-1}{n-m}\cdot \binom{n}{m}\cdot 2^m
\]
Помним, что угловые не делятся на кластеры и, значит, $m<n$. Тогда
\[
Cl(n,k)=\prod_{m=2}^{n-1} ((n-m)! \cdot 2^{n-m} + 1)^{L(m)}=
\prod_{m=1}^{n-2} (m! \cdot 2^m + 1)^{L(n-m)}.
\]
Переопределим $L(m)$ для большего удобства и получим утверждение теоремы.

\end{proof}

\subsection{Доказательство полноты}
Система полна, если она содержит максимальное число инварантов. Для доказательства полноты, необходимо показать, что если у двух произвольных состояний Кубика Рубика
\begin{itemize}
    \item разные значения полной системы, то они не связаны. Этот факт очевиден.
    \item одинаковые значения полной системы, то они связаны. Ниже приведём алгоритм, который в этом случае переводит головоломку из одного состояния в другое.
\end{itemize}

\textbf{1 шаг (каркасные инварианты).}

\textbf{Пусть $k$ нечётно. Переведём каркас из первого состояния во второе.} Далее будем называть ориентацией каркасной пары то, как располагаются её кубики, при неизменном положении пары. Легко видеть, что у каждой существуют лишь две ориентации.

Так как $B_1$ принимает одинаковые значения, то в обоих состояниях каркасные пары имеют один и тот же состав. Каждый поворот осуществляет транспозицию пар, поэтому мы можем реализовать любую их перестановку. Переставим их так, чтобы множества пар в двух состояниях отличались лишь ориентациями.

Для ориентирования пар заметим, что двойные повороты одновременно меняют ориентации двух пар. Тогда воспользуемся приёмом в замечании к теореме \ref{twocubesorient} и ориентируем все пары, кроме одной. Итак, все каркасные кубики (кроме, может быть, кубиков оставшейся пары) расположены на нужных местах.

Предположим, что последняя пара ориентирована неверно, то есть два каркасных кубика не на своих местах. Тогда чётности перестановки $\sigma_c$ в состояниях различны. В то же время перестановки $\sigma_p$ у них совпадают, как и значения $B_2$. Тогда должны совпадать и чётности $\sigma_c$, откуда следует противоречие. Значит, все пары ориентированы верно.

\textbf{2 шаг (центральный инвариант).}

\textbf{Переведём все центральные классы из положений в первом состоянии в положения во втором состоянии.}

\begin{deff}
Процесс, при котором мы переводим класс из начального положения в промежуточное, отличающееся от конечного чётной перестановкой элементов, называется \textbf{настройкой чётности} класса.
\end{deff}

Пусть $k$ нечётно. Лемма \ref{central} позволяет настроить чётность центрального класса $m$ вращением $(n+1-m)$-мерной внешней грани. При этом все центральные классы $m^{'}$, где $m^{'}<m$, остаются нетронутыми.

Центральный класс 1, то есть каркас, уже находится в нужном состоянии. Настроим чётность, если необходимо, центрального класса 2. Затем класса 3 и так далее, пока не доберёмся до $(n-1)$-го. Останется лишь класс угловых кубиков.

Центральные инварианты, как и чётности всех классов, кроме углового, совпадают. Тогда должны совпасть и чётности угловых. Таким образом, настройка завершена.

Если $k$ чётно, то центральный класс всего один -- угловой. Если необходимо настроить его чётность, то совершаем один поворот двумерного внешнего слоя. Четыре угловых кубика циклически поменяются и чётность будет настроена.

Применяя теорему о чётной перестановке класса, завершаем шаг 2. 

\textbf{3 шаг.}

Все классы далее не являются центральными.

\textbf{Выберем произвольный класс и переведём его в положение второго состояния.} Для этого занумеруем его кубики в первом состоянии и отметим, где они должны находиться во втором. Если их перестановка чётная, то существует нужная аккуратная комбинация по теореме о чётной перестановке.

Пусть оказалось, что перестановка нечётная. Вспомним, что в нецентральных классах все кубики не уникальны. Выберем два неотличимых и поменяем их номера в первом состоянии местами (см. рис. \ref{sec6sub2pic1}). Тогда необходимая перестановка класса снова будет чётной, и можно будет воспользоваться теоремой.

\begin{figure}[h]
    \centering
    \includegraphics[width=0.7\linewidth]{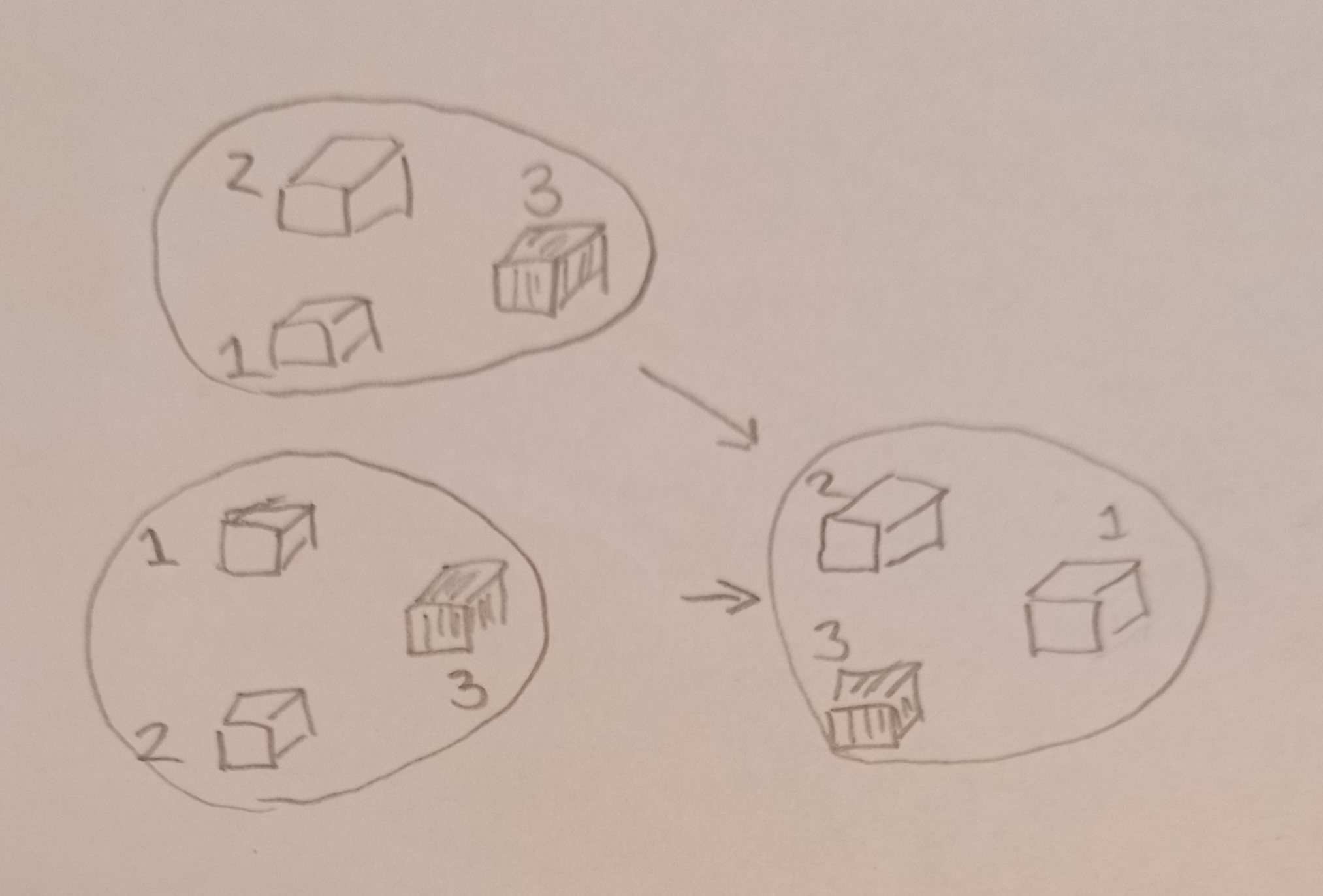}
    \caption{}
    \label{sec6sub2pic1}
\end{figure}

\textbf{4 шаг (кластерные инварианты).}

\textbf{Выберем произвольный нетривиальный знакопеременный класс.} Его кластеры уже находятся в нужных положениях, однако кубики ещё не ориентированы. Совершим дополнительный шаг.

Перейдём к упрощённым ориентациям. Переставим элементы класса так, чтобы каждый кластер остался в своем положении, но при этом был упрощённо ориентирован так же, как и во втором состоянии.

Выберем кластер $Cl$ из класса. В двух его состояниях совпадают ориентационные инварианты. Значит, в них одинаковое количество кубиков с $or=1$. То же верно и для $or=0$. Этот факт допускает возможность ориентирования кластера (см. рис. \ref{sec6sub2pic2}). Пусть $sgn(\sigma_{Cl})$ -- чётность перестановки, которую нужно совершить, чтобы ориентировать кластер.

\begin{figure}[h]
    \centering
    \includegraphics[width=0.5\linewidth]{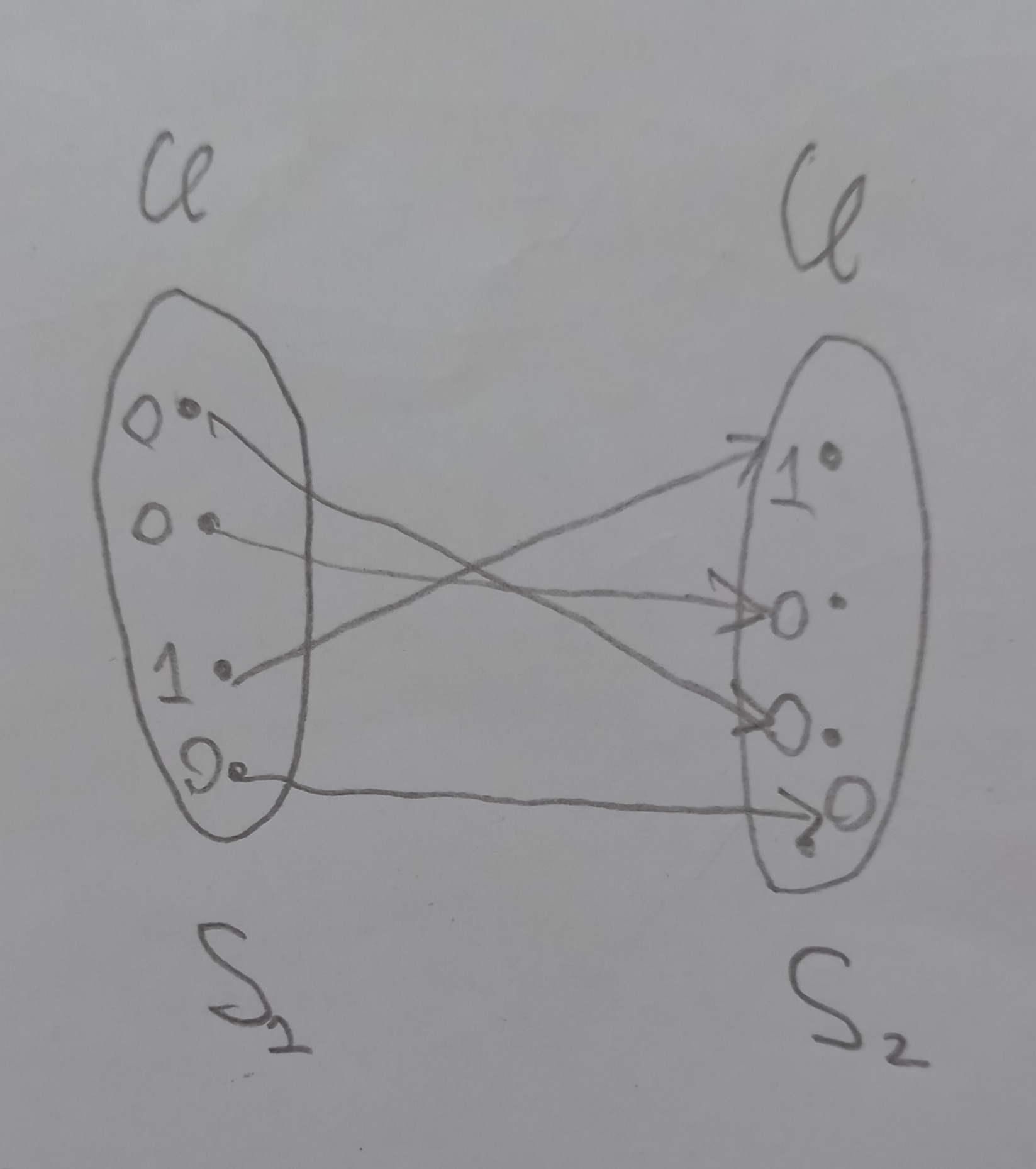}
    \caption{}
    \label{sec6sub2pic2}
\end{figure}

Если 
\[
\prod_{Cl} sgn(\sigma_{Cl})=1,
\]
то чтобы упрощённо ориентировать класс нужна чётная перестановка. Далее пользуемся соответствующей теоремой и аккуратно переставляем все элементы.

Пусть произведение равно -1. Если в классе найдутся два кубика из одного кластера с одинаковой упрощённой ориентацией, то они неотличимы и по раскраске и по ориентации. Воспользуемся приёмом из третьего шага и сменим чётность перестановки.

В каких случаях такие кубики могут не существовать? В каждом нерёберном кластере есть хотя бы 8 элементов. Среди них наверняка есть два, совпадающих по упрощённой ориентации. У рёберных же кластеров всего по два кубика.

Тогда исключительным случаем будет рёберный класс, в котором у каждого кластера ориентации кубиков 1 и 0. В такой ситуации придётся настраивать чётность рёберного класса другим способом, чем в шаге 3.

Выберем произвольный внешний трёхмерный слой Кубика Рубика. Для удобства спроецируем его на трёхмерное пространство (см. рис. \ref{sec6sub2pic3}). Если нужно настроить чётность нецентрального класса $(n-1,j), j \neq \mdd$, то повернём грань, как на рисунке. При таком повороте циклически переставятся 4 кубика класса. Что самое главное, ни один из центральных или других рёберных кубиков не будет задет. 

\begin{figure}[h]
    \centering
    \includegraphics[width=0.5\linewidth]{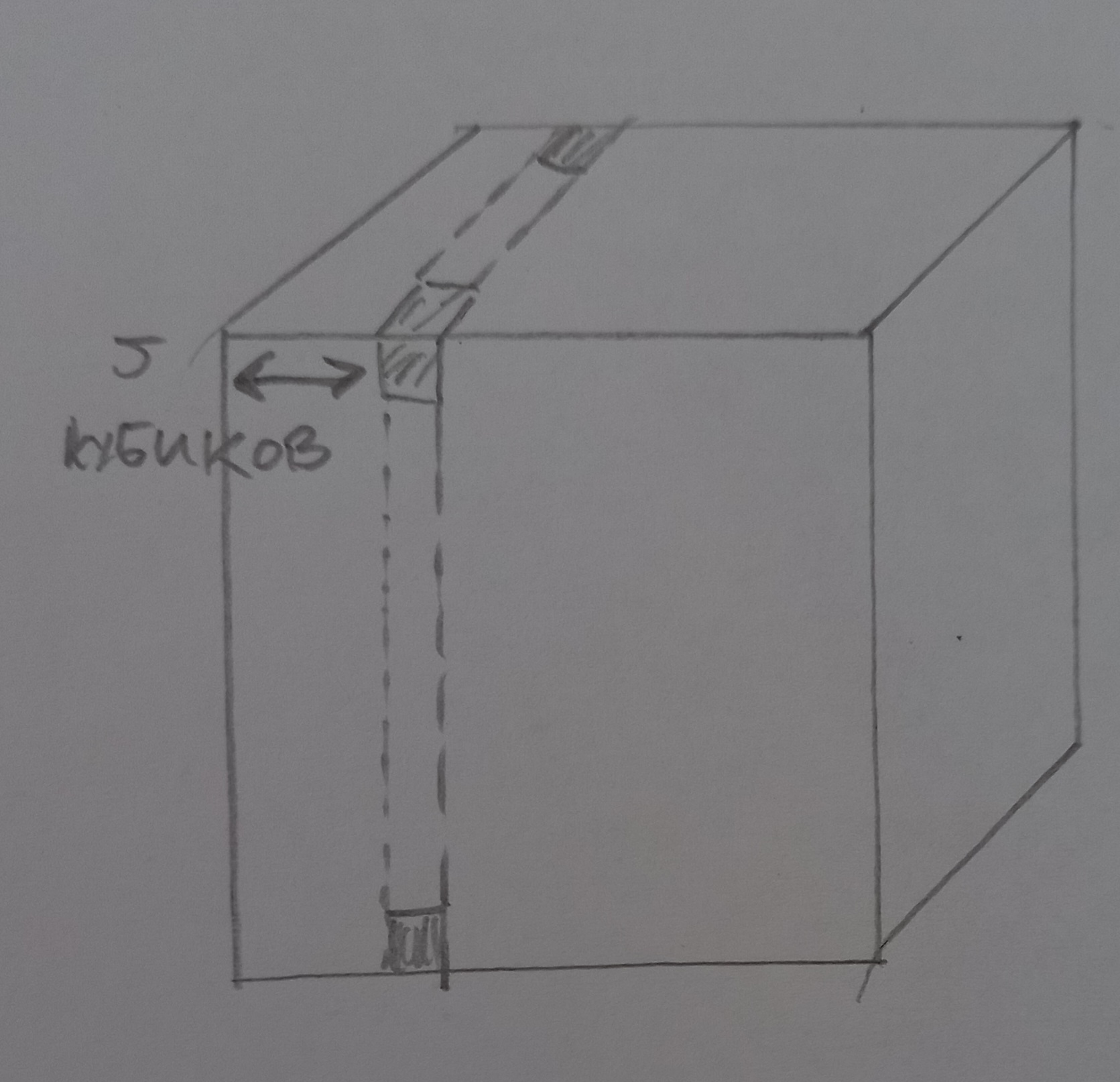}
    \caption{}
    \label{sec6sub2pic3}
\end{figure}

Проведём эту настройку чётностей рёберных перед третьим шагом.

Подведём итог
\begin{itemize}
    \item Сначала перевели каркас во второе состояние.
    \item Аккуратно расположили центральные классы также, как и во втором состоянии. Остались только нецентральные.
    \item По необходимости настроили чётности нецентрально-рёберных кубиков. При этом состояния центральных классов изменены не были.
    \item Аккуратно расположили все нецентральные классы как во втором состоянии.
    \item Аккуратно упрощённо соориентировали все нетривиальные знакопеременные классы. В их числе и нецентрально-рёберные, для которых был совершен дополнительный шаг.
\end{itemize}

Таким образом, осталось только ориентировать классы.

\textbf{5 шаг (ориентационные инварианты)}

У всех классов в обоих состояниях совпадают ориентационные инварианты. Воспользуемся теоремой об ориентировании класса и завершим доказательство полноты. Но для её применения необходимо проверить, что ориентация каждого кубика во втором состоянии достижима из ориентации первого состояния.

\begin{itemize}
    \item У всех симметрических классов зависимая группа вращения -- симметрическая, потому ориентация во втором состоянии каждого их элемента достижима из ориентации в первом. Помним, что все центральные классы кроме углового являются симметрическими.
    \item У угловых кубиков зависимая группа хоть и не является симметрической, но совпадает с независимой. Поэтому их ориентации в одном состоянии достижимы из другого.
    \item Остались знакопеременные классы. Кубики каждого такого класса в обоих состояниях имеют одну и ту же упрощённую ориентацию. Тогда их ориентации достижимы друг из друга.
\end{itemize}

\end{document}